\newtheorem{thm}{Theorem}
\newtheorem{lem}[thm]{Lemma}
\newtheorem{defi}[thm]{Definition}
\newtheorem{cor}[thm]{Corollary}
\newtheorem{prop}[thm]{Proposition}
\newtheorem{compo}{Computation}
\newcommand{\A}{\mathcal{A}}
\begin{document}

\title{Universal measurability and the Hochschild class of the Chern character}

\author{A.L.  Carey}
\address{Mathematical Sciences Institute, Australian National University, Canberra, 2602, Australia, and\newline School of Mathematics and Applied Statistics, University of Wollongong, 2522, Australia}\email{alan.carey@anu.edu.au}
\author{A. Rennie}
\address{School of Mathematics and Applied Statistics, University of Wollongong, 2522, Australia}\email{renniea@uow.edu.au}
\author{F. Sukochev}
\address{School of Mathematics and Statistics, University of New South Wales, Sydney, 2052, Australia.}
\email{f.sukochev@unsw.edu.au, d.zanin@unsw.edu.au}
\author{D. Zanin}

\subjclass[2000]{}

\keywords{}

\date{}

\dedicatory{}

\begin{abstract} 
We study notions of measurability for singular traces, and characterise universal
measurability for operators in Dixmier ideals.
This measurability result is then applied to improve on the various proofs
of Connes' identification of the
Hochschild class of the Chern character of Dixmier summable spectral triples.

The measurability results show that the identification of the Hochschild class is 
independent of the choice of singular trace. As a corollary we obtain strong information on the 
asymptotics of the eigenvalues
of operators naturally associated to spectral triples $(\A,H,D)$ and Hochschild cycles for $\A$. 
\end{abstract}

%
%

\maketitle

\tableofcontents

\bibliographystyle{plain}

\parindent=0.0pt
\parskip=3.0pt

\section{Introduction}

 The Character Theorem of Alain Connes, \cite[Theorem 8, IV.2.$\gamma$]{Connes}, 
 has been the subject of a number of papers (see  \cite{BeF, CPRS, GVF}). Our 
 main results in this paper (Theorem \ref{main result} and Corollary \ref{main cor}) 
 extend and strengthen this theorem. In essence, this result identifies the Hochschild cohomology class
 of the cyclic cohomology Chern character of a spectral triple. To do this, one must suppose
 that the spectral triple has `integral spectral dimension', say $p\geq 1$, and that 
 the spectral triple is $(p,\infty)$-summable.
 
 The definition of $(p,\infty)$-summability involves one of two ideals, denoted here by
 $\mathcal{L}_{1,\infty}$ and $\mathcal{M}_{1,\infty}$, or the related ideals 
 $\mathcal{L}_{p,\infty}$ and $\mathcal{M}_{1,\infty}^{(p)}$. This is where potential confusion can arise,
 as well as much difficulty since the ideal $\mathcal{M}_{1,\infty}$ is more subtle than 
 $\mathcal{L}_{1,\infty}$. The key technical improvement in this paper is the identification of 
 a criterion guaranteeing measurability with respect to families of traces on these ideals.

We remark that we began this investigation because there is a gap in the proof of Lemma 14
in \cite{CPRS} for the case $p=1$ and the ideal denoted (and defined) below by $\mathcal{M}_{1,\infty}$.
Rather than simply produce an erratum we decided to revisit the whole argument in the 
light of progress made in the last 10 years \cite{CRSS,CarSuk, DFWW,KLPS} which 
provides, amongst other contributions, a more powerful algebraic approach to these 
issues resulting in two advances.

(i) We prove Connes' result for arbitrary traces on $\mathcal{L}_{1,\infty}$ (other
proofs hold only for the original trace discovered by  Dixmier). This has  
interesting  implications for the eigenvalues of the Hochschild cycles.

(ii)  We prove the analogue of this theorem for the (Macaev-Dixmier) 
ideal $\mathcal{M}_{1,\infty}$ as well as the $p$-convexifications $\mathcal{M}_{1,\infty}^{(p)}$
(introduced in \cite{CRSS}, and denoted there by ${\mathcal Z}_p$). 
The latter ideal strictly contains $\mathcal{L}_{p,\infty}$. Our proof holds for  a wide 
class of traces on $\mathcal{M}_{1,\infty}$.

Moreover we make an interesting technical innovation in this  current approach by 
exploiting recently discovered connections\footnote{For a detailed exposition of the 
connections, we refer the reader to \cite{LSZ}. } between Dixmier traces and heat 
kernel functionals exposed in \cite{SZ-JFA}. These connections result in a  
streamlining of the proof and a major reduction in the  number of estimates 
needed (compared to the proof in \cite{CPRS}). 

Our results are presented in the context of operator ideals in $\mathcal{L}(H)$ for a
separable infinite dimensional Hilbert space. All of our results carry over to the general case
of operator ideals of a semifinite von Neumann algebra although we do not 
present the argument in that
generality here. We have simplified
our approach, compared to \cite{CPRS}, by assuming that our 
spectral triples (introduced in Section \ref{sec:prelim-NCG}) are smooth, however,
by taking more care in Lemma \ref{f second est}
we can recover the minimal smoothness requirements
of \cite[Lemma 2]{CPRS}.

As an indication of the improvements we have obtained, we state a 
consequence of our results which is applicable
to numerous examples in the literature, including the case of 
Dirac operators on compact manifolds
and the noncommutative torus.

{\bf Theorem}.
{\it Let $(\A,H,D)$ be a spectral triple with $(1+D^2)^{-1/2}\in\mathcal{L}_{p,\infty}$, where $p$
is an integer of the same parity as the spectral triple. If the spectral triple is even we let $\Gamma$
be the grading, and otherwise let $\Gamma=1$.
For every Hochschild cycle $c\in\mathcal{\A}^{\otimes p+1}$, 
$
c=\sum_ic^i_0\otimes c^i_1\otimes\cdots\otimes c^p_i
$
set $\Omega(c)=\sum_i\Gamma c^i_0[D,c^i_1]\cdots[D,c^i_p]$. Then  denoting the
(suitably ordered)\footnote{The eigenvalues are counted with 
algebraic multiplicities and arranged so that their absolute values are non-increasing.} 
eigenvalues of $\Omega(c)(1+D^2)^{-p/2}$ by $\lambda_k$ we have
$$
\sum_{k=0}^n\lambda_k= Ch(c)\log(n)+O(1),
$$
where $Ch$ is the Chern character of the $K$-homology class of $(\mathcal{A},H,D)$. In particular,
$\Omega(c)(1+D^2)^{-p/2}$ is universally measurable in the sense of  Definition \ref{def:uni-meas}.}

The necessary background on operator ideals, traces and measurability is presented
in Section \ref{sec:prelim-traces}, and a key abstract measurability criterion is established
in subsection \ref{abstract}. Section \ref{sec:prelim-NCG} summarises what we need about
spectral triples, Chern characters and Hochschild cohomology. We state our main results, Theorem
\ref{main result} and Corollary \ref{main cor}
together with an outline of the proof  in subsection \ref{plan}. Section \ref{sec:proofs}
presents the proofs. An appendix shows how certain 
Hochschild coboundaries are
computed.

{\bf Acknowledgements.} All authors were supported by the Australian Research Council.
AC also acknowledges the Alexander von Humboldt Stiftung and thanks colleagues at the 
University of M\"unster for support while this research was undertaken.

\section{Preliminaries on operator ideals, traces and measurability}
\label{sec:prelim-traces}

\subsection{General notation}
Fix throughout a separable infinite dimensional Hilbert space $H$.
We let $\mathcal{L}(H)$ denote the algebra of all bounded operators on $H.$ 
For a compact operator $T$ on $H$, let $\lambda(k,T)$ and $\mu(k,T)$ 
denote its $k$-th eigenvalue\footnotemark[\value{footnote}]
and $k$-th largest singular value (these are the eigenvalues of $|T|$). The 
sequence $\mu(T)=\{\mu(k,T)\}_{k\geq0}$ is referred to as  the singular 
value sequence of the operator $T$. The standard trace on $\mathcal{L}(H)$ is denoted by 
${\rm Tr}$. For an arbitrary operator $0\leq T\in \mathcal{L}(H)$, we set
$$
n_T(t):={\rm Tr}(E_T(t,\infty)),\ t>0,
$$
where 
$E_{T}{(a,b)}$ stands for the  spectral projection of a self-adjoint operator $T$ 
corresponding to the interval $(a,b)$.
Fix an orthonormal basis in $H$ (the particular choice of a basis is inessential). 
We identify the  algebra $l_{\infty}$ of bounded sequences with the subalgebra 
of all diagonal operators with respect to the chosen basis. For a given sequence 
$\alpha\in l_{\infty},$ we denote the corresponding diagonal operator by ${\rm diag}(\alpha).$


\subsection{Principal ideals $\mathcal{L}_{p,\infty}$ and the Macaev-Dixmier ideal $\mathcal{M}_{1,\infty}$}

For a given $0<p\leq\infty,$ we let $\mathcal{L}_{p,\infty}$ denote the principal ideal in $\mathcal{L}(H)$ generated by the operator ${\rm diag}(\{(k+1)^{-1/p}\}_{k\geq0}).$ Equivalently,
$$
\mathcal{L}_{p,\infty}=\{T\in\mathcal{L}(H): \mu(k,T)=O((k+1)^{-1/p})\}.
$$
These ideals, for different $p$, all admit an equivalent description in terms of 
spectral projections, namely
\begin{equation}
\label{tlp}
T\in\mathcal{L}_{p,\infty}\Longleftrightarrow {\rm Tr}(E_{|T|}(1/n,\infty))=O(n^p).
\end{equation}
We also have
$$
|T|^p\in\mathcal{L}_{1,\infty}\Longleftrightarrow \mu^p(k,T)
=O((k+1)^{-1})\Longleftrightarrow T\in\mathcal{L}_{p,\infty}.
$$
We equip the ideal $\mathcal{L}_{p,\infty},$ $0<p\leq\infty$, 
with a quasi-norm\footnote{A quasinorm satisfies the norm axioms, 
except that the triangle inequality is replaced by $||x+y||\leq K(||x||+||y||)$ 
for some uniform constant $K>1$.}
$$
\|T\|_{p,\infty}=\sup_{k\geq0}(k+1)^{1/p}\mu(k,T),\quad T\in\mathcal{L}_{p,\infty}.
$$

The following H\"older property is widely used throughout the paper:
\begin{equation}
\label{lpi mult}
A_k\in\mathcal{L}_{p_m,\infty},\ 1\leq m\leq n,\Longrightarrow 
\prod_{m=1}^nA_m\in\mathcal{L}_{p,\infty},\ \frac1p=\sum_{m=1}^n\frac1{p_m}.
\end{equation}

We also need the Macaev-Dixmier  ideal $\mathcal{M}_{1,\infty}$, 
also known as a Lorentz space, given by
$$
\mathcal{M}_{1,\infty}=\{A\in\mathcal{L}(H):\ \sup_{n\geq0}\frac1{\log(2+n)}\sum_{k=0}^n\mu(k,A)<\infty\}.
$$
The ideal $\mathcal{M}_{1,\infty}^{(p)}$ initially considered in \cite{CRSS} is the 
$p$-convexification of $\mathcal{M}_{1,\infty}$ defined as follows.
$$
\mathcal{M}_{1,\infty}^{(p)}=\{A\in\mathcal{L}(H):\ |A|^p\in\mathcal{M}_{1,\infty}\}.
$$
The ideal $\mathcal{M}_{1,\infty}^{(p)}$ strictly contains 
$\mathcal{L}_{p,\infty}.$ We refer the reader to the book 
\cite{LSZ} for a detailed discussion of the ideals $\mathcal{L}_{1,\infty}$ and $\mathcal{M}_{1,\infty}$.

\subsection{Traces on $\mathcal{L}_{1,\infty}$}

\begin{defi}\label{trace def} If $\mathcal{I}$ is an 
ideal in $\mathcal{L}(H),$ then a unitarily invariant 
linear functional $\varphi:\mathcal{I}\to\mathbb{C}$ is said to be a trace.
\end{defi}
Since $U^{-1}TU-T=[U^{-1},TU]$ for all $T\in\mathcal{I}$ and 
for all unitaries $U\in\mathcal{L}(H),$ and since the unitaries 
span $\mathcal{L}(H),$ it follows that traces are precisely 
the linear functionals on $\mathcal{I}$ satisfying the condition
$$\varphi(TS)=\varphi(ST),\quad T\in\mathcal{I}, S\in\mathcal{L}(H).$$
The latter may be reinterpreted as the vanishing of the 
linear functional $\varphi$ on the commutator 
subspace\footnote{The commutator subspace of the ideal is, 
in general, not an ideal in $\mathcal{L}(H).$ For example, it 
follows from Theorem \ref{komm description} below that
$$
{\rm diag}\Big(\Big\{\frac{(-1)^k}{k+1}\Big\}_{k\geq0}\Big)\in[\mathcal{L}_{1,\infty},\mathcal{L}(H)],
\quad {\rm diag}\Big(\Big\{\frac1{k+1}\Big\}_{k\geq0}\Big)\notin[\mathcal{L}_{1,\infty},\mathcal{L}(H)].
$$
However, the commutator subspace of the ideal $\mathcal{L}_{1,\infty}$ is an 
ideal in $\mathcal{L}_{1,\infty}$ (as opposed to $\mathcal{L}(H)$). We refer the 
reader to \cite{PW} for the study of such subideals.}
which is denoted $[\mathcal{I},\mathcal{L}(H)]$ and defined to be the linear 
span of all commutators $[T,S]:\ T\in\mathcal{I},$ $S\in\mathcal{L}(H).$ 
It is shown in \cite[Lemma 5.2.2]{LSZ} that  
$\varphi(T_1)=\varphi(T_2)$ whenever 
$0\leq T_1,T_2\in\mathcal{I}$ are such that the singular value 
sequences $\mu(T_1)$ and $\mu(T_2)$ coincide.
For $p>1,$ the ideal $\mathcal{L}_{p,\infty}$ does not admit a 
non-zero trace while for $p=1,$ 
there exists a plethora of traces on $\mathcal{L}_{1,\infty}$ 
(see e.g. \cite{DFWW} or \cite{LSZ}). An example of  a trace on 
$\mathcal{L}_{1,\infty}$ is the restriction (from $\mathcal{M}_{1,\infty}$) 
of the  Dixmier trace introduced in \cite{Dixmier}
that we now explain.

\begin{defi}
The dilation semigroup on $l_\infty $ is defined by setting
$$
\sigma_k(x_0,x_1,\cdots)=(\underbrace{x_0,\cdots,x_0}_{\mbox{$k$ times}},\underbrace{x_1,\cdots,x_1}_{\mbox{$k$ times}},\cdots)
$$
for every $k\geq 1.$
In this paper  {\it a dilation invariant extended limit} means a  
state on the algebra $l_{\infty}$
invariant under $\sigma_k$, $k=2,3,\ldots$.
\end{defi}
\noindent{\bf Example}. Let $\omega$ be a dilation invariant extended limit.
Then the functional ${\rm Tr}_{\omega}:\mathcal{M}_{1,\infty}^+\to\mathbb{C}$ defined by setting
$$
{\rm Tr}_{\omega}(A)=\omega\Big(\Big\{\frac1{\log(2+n)}\sum_{k=0}^n\mu(k,A)\Big\}_{n\geq0}\Big),
\quad 0\leq A\in\mathcal{M}_{1,\infty},
$$
is additive and, therefore, extends to a trace on $\mathcal{M}_{1,\infty}.$ 
We call such traces  {\it Dixmier traces}. 
These traces clearly depend on the choice of the functional $\omega$ on $l_\infty$.
Using a slightly different definition,  this notion of trace was 
applied by Connes \cite{Connes} in noncommutative geometry. 
We also remark that the assumption used by Dixmier of translation invariance for
the functional $\omega$ is redundant  \cite[Theorem 6.3.6]{LSZ}.
%
An extensive discussion of traces, and more recent developments in the theory,
may be found in \cite{LSZ} including a discussion of the following facts.
\begin{enumerate}
\item All Dixmier traces on $\mathcal{L}_{1,\infty}$ are positive.
\item All positive traces on $\mathcal{L}_{1,\infty}$ are continuous in the quasi-norm topology.
\item There exist positive traces on $\mathcal{L}_{1,\infty}$ which are not (restrictions to $\mathcal{L}_{1,\infty}$ from $\mathcal{M}_{1,\infty}$ of)  Dixmier traces 
(see \cite{SSUZ-pietsch}).
\item There exist traces on $\mathcal{L}_{1,\infty}$ which fail to be continuous (see \cite{DFWW}).
\end{enumerate}

We are mostly interested in {\it normalised traces} $\varphi:\mathcal{L}_{1,\infty}\to\mathbb{C},$ that is, satisfying $\varphi(T)=1$ whenever $0\leq T$ is such that $\mu(k,T)=1/(k+1)$ for all $k\geq0.$ We do not require continuity of a normalised trace. 

The following definition, extending that originally introduced in \cite{Connes}, plays an important role here.
\begin{defi} 
\label{def:uni-meas}
An operator $T\in\mathcal{L}_{1,\infty}$ will be said to be
 universally measurable if all normalised traces take the same value on $T.$ 
\end{defi}

The following lemma characterises the universally measurable operators.

\begin{lem} All normalised traces on $\mathcal{L}_{1,\infty}$ take the value $z\in\mathbb C$ on the operator $T$ if and only if
$$
T-z\cdot{\rm diag}\Big(\Big\{\frac1{k+1}\Big\}_{k\geq0}\Big)\in[\mathcal{L}_{1,\infty},\mathcal{L}(H)].
$$
\end{lem}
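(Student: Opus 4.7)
The plan is to view everything through the quotient $V := \mathcal{L}_{1,\infty}/[\mathcal{L}_{1,\infty},\mathcal{L}(H)]$ with quotient map $\pi$. By the discussion preceding the lemma, traces on $\mathcal{L}_{1,\infty}$ are precisely the linear functionals on $\mathcal{L}_{1,\infty}$ that vanish on $[\mathcal{L}_{1,\infty},\mathcal{L}(H)]$, hence they correspond bijectively (via $\pi$) with linear functionals on $V$. Write $D_0 = \mathrm{diag}(\{1/(k+1)\}_{k\geq 0})$; a normalised trace is one satisfying $\varphi(D_0)=1$.

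The forward implication is immediate: if $T - z D_0 \in [\mathcal{L}_{1,\infty},\mathcal{L}(H)]$ then every trace $\varphi$ annihilates it, so $\varphi(T)=z\varphi(D_0)$, which equals $z$ whenever $\varphi$ is normalised.

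For the converse, suppose every normalised trace $\varphi$ gives $\varphi(T)=z$. The key preliminary observation is that $\pi(D_0) \neq 0$ in $V$: the example following the definition of the Dixmier trace exhibits a trace with $\mathrm{Tr}_\omega(D_0) = 1 \neq 0$, which rules out $D_0 \in [\mathcal{L}_{1,\infty},\mathcal{L}(H)]$. Set $w := \pi(T - zD_0) \in V$ and suppose, for contradiction, that $w\neq 0$. I split into two cases based on whether $\pi(D_0)$ and $w$ are linearly dependent in $V$.

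If $w = c\,\pi(D_0)$ for some $c\in\mathbb{C}\setminus\{0\}$, then for any trace $\psi$ we have $\psi(T-zD_0) = c\,\psi(D_0)$; applying this to any normalised trace (which exists, e.g.\ a Dixmier trace) gives $\psi(T)=z+c\neq z$, contradicting the hypothesis. Otherwise $\pi(D_0)$ and $w$ are linearly independent, so I extend them to a Hamel basis of $V$ and define a linear functional $\Psi\colon V\to\mathbb{C}$ by $\Psi(\pi(D_0))=1$, $\Psi(w)=1$, and $\Psi=0$ on the remaining basis vectors. Pulling back through $\pi$, $\Psi$ corresponds to a normalised trace $\varphi$ with $\varphi(T-zD_0)=1$, hence $\varphi(T)=z+1\neq z$, again contradicting the hypothesis. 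Either case forces $w=0$, i.e.\ $T-zD_0\in[\mathcal{L}_{1,\infty},\mathcal{L}(H)]$.

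There is no substantial obstacle in this argument; it is a straightforward application of the correspondence between traces and functionals on the commutator quotient together with elementary linear algebra. The only point requiring input from the theory is the non-triviality $\pi(D_0)\neq 0$, which is guaranteed by the existence of at least one normalised Dixmier trace.
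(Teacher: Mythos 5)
Your proof is correct and takes essentially the same route as the paper: both translate traces into linear functionals on the quotient by the commutator subspace, both rely on the existence of at least one normalised trace (equivalently, that $D_0$ does not lie in the commutator subspace), and both construct a normalised trace taking value $z+1$ on $T$. The only difference is in the final construction: you prescribe the functional on a Hamel basis after a case split on linear (in)dependence of $\pi(D_0)$ and $\pi(T-zD_0)$, whereas the paper applies Zorn's lemma once to get a trace $\varphi$ with $\varphi(T-zD_0)=1$ and then normalises it by adding the correction term $(1-\varphi(D_0))\varphi_0$ for a fixed normalised $\varphi_0$, thereby avoiding the case split.
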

\begin{proof} Suppose that all normalised traces on $\mathcal{L}_{1,\infty}$ 
take the value $z$ on the operator $T$. For brevity we write 
$T_0={\rm diag}(\{\frac1{k+1}\}_{k\geq0})$. If $T-zT_0$ is not in the commutator 
subspace, then it follows from Zorn's lemma that there exists a linear functional $
\varphi$ on $\mathcal{L}_{1,\infty}$ such that $\varphi|_{[\mathcal{L}_{1,\infty},\mathcal{L}(H)]}=0$ 
and such that $\varphi(T-zT_0)=1$. By Definition \ref{trace def}, $\varphi$ is a trace.
Fix a normalised trace $\varphi_0$. The normalised trace 
$\varphi+(1-\varphi(T_0))\varphi_0$ takes the value $z+1$ at $T$, 
which contradicts the assumption. This proves that $T-zT_0\in[\mathcal{L}_{1,\infty},\mathcal{L}(H)]$. 
The converse assertion follows from the definitions.
\end{proof}

The description of the commutator subspace initially appeared in \cite{Kalton1998} in a very general situation. The statement below appeared first in \cite{KLPS} and for a detailed proof  we refer the reader to Theorem 5.7.6 and Theorem 10.1.3 in \cite{LSZ}.
\begin{prop}\label{komm description} An operator $T\in\mathcal{L}_{1,\infty}$ is universally measurable if and only if
$$\sum_{k=0}^n\lambda(k,T)=z\log(n+1)+O(1),\quad n\geq0,$$
for some constant $z\in\mathbb{C}.$ In this case, $\varphi(T)=z$ for every normalised trace $\varphi.$ In particular
$$
T\in[\mathcal{L}_{1,\infty},\mathcal{L}(H)]\Longleftrightarrow \sum_{k=0}^n\lambda(k,T)
=O(1),\quad n\geq0.
$$
\end{prop}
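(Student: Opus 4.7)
My plan is to combine the preceding lemma with the Kalton--Figiel description of the commutator subspace in $\mathcal{L}_{1,\infty}$, invoked here from \cite[Theorems 5.7.6, 10.1.3]{LSZ}. Writing $T_0 := \mathrm{diag}\{1/(k+1)\}_{k\geq 0}$, the preceding lemma reformulates the statement ``all normalised traces take the value $z$ on $T$'' as $T - zT_0 \in [\mathcal{L}_{1,\infty},\mathcal{L}(H)]$. Thus the proposition will follow once commutator-subspace membership is detected by partial eigenvalue sums, and those sums for $T-zT_0$ are related to those for $T$ alone.

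The ``in particular'' statement of the proposition is precisely the Kalton--Figiel theorem for $\mathcal{L}_{1,\infty}$: an operator $S\in\mathcal{L}_{1,\infty}$ belongs to $[\mathcal{L}_{1,\infty},\mathcal{L}(H)]$ if and only if $\sum_{k=0}^n\lambda(k,S)=O(1)$. I would quote this as a black box from \cite{Kalton1998,KLPS,LSZ}; its proof there proceeds by first absorbing $S$ into a diagonal operator modulo commutators (Kalton's diagonalisation), then realising any complex sequence with bounded partial sum as an explicit finite sum of commutators by averaging the diagonal sequence against its shifts in $\ell_\infty$.

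Applied to $S = T - zT_0$, this criterion reduces the main assertion to the asymptotic
$$
\sum_{k=0}^n\lambda(k,T-zT_0)\;=\;\sum_{k=0}^n\lambda(k,T)\;-\;z\log(n+1)\;+\;O(1),
$$
where I have used $\sum_{k=0}^n\lambda(k,T_0)=H_{n+1}=\log(n+1)+O(1)$. This additivity modulo $O(1)$ is the main obstacle, since individual eigenvalues are not additive under operator sums. It is nevertheless a Weyl--Horn type estimate for Dixmier-class operators and is part of the same Kalton--Figiel theory in \cite[Chapter 5]{LSZ}: more invariantly, the partial-sum sequence $\{\sum_{k=0}^n\lambda(k,\cdot)\}_n$, read modulo bounded sequences, descends to a \emph{linear} map on $\mathcal{L}_{1,\infty}/[\mathcal{L}_{1,\infty},\mathcal{L}(H)]$, and linearity applied to the decomposition $T=(T-zT_0)+zT_0$ delivers the displayed identity on the nose.

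Assembling the three steps: $T$ is universally measurable with common value $z$ iff $T-zT_0\in[\mathcal{L}_{1,\infty},\mathcal{L}(H)]$ iff $\sum_{k=0}^n\lambda(k,T-zT_0)=O(1)$ iff $\sum_{k=0}^n\lambda(k,T)=z\log(n+1)+O(1)$. The evaluation $\varphi(T)=z$ for every normalised trace $\varphi$ is then immediate from the preceding lemma, since
$$
\varphi(T)-z\;=\;\varphi(T)-z\varphi(T_0)\;=\;\varphi(T-zT_0)\;=\;0.
$$
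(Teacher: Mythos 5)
The paper does not itself prove Proposition~\ref{komm description}: it is cited from \cite{KLPS} with a pointer to Theorems 5.7.6 and 10.1.3 of \cite{LSZ}, so there is no in-text argument to compare against. Your reduction --- the preceding lemma to translate ``all normalised traces equal $z$'' into $T-zT_0\in[\mathcal{L}_{1,\infty},\mathcal{L}(H)]$, then the Kalton--Figiel characterisation of the commutator subspace, then the $O(1)$-additivity of the eigenvalue partial sums --- is the correct and natural way to connect the cited material to the stated proposition, and the final computation $\varphi(T)=z$ is handled properly.

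One caution: the step you call the ``main obstacle'' --- that $T\mapsto\{\sum_{k=0}^n\lambda(k,T)\}_n$ is $O(1)$-additive on $\mathcal{L}_{1,\infty}$ --- is genuinely the entire content of the cited theorems beyond the $z=0$ case, and it is not a free consequence of Weyl-type estimates: for operators in $\mathcal{L}_{1,\infty}$ the natural Ky Fan remainder term is unbounded. Your phrasing that the partial-sum functional ``descends to a linear map on $\mathcal{L}_{1,\infty}/[\mathcal{L}_{1,\infty},\mathcal{L}(H)]$'' packages together additivity, well-definedness, and injectivity on the quotient; all three come from Kalton's diagonalisation $T\equiv\mathrm{diag}\{\lambda(k,T)\}\pmod{[\mathcal{L}_{1,\infty},\mathcal{L}(H)]}$ combined with the commutator characterisation for diagonal operators. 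Since the paper itself leaves the whole statement to \cite{LSZ}, invoking that source as a black box is acceptable, but it should be made explicit that the cited theorems do deliver the linearity and not merely the $z=0$ membership criterion.
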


\subsection{A universal measurability result}
\label{abstract}

In this subsection, we prove a measurability criterion for operators of the 
form $AV,$ $A\in\mathcal{L}(H),$ $V\in\mathcal{L}_{1,\infty},$ or $V\in\mathcal{M}_{1,\infty}.$
This result links measurability with the heat semigroup, thus significantly improving the main result of \cite{CarSuk}. More information on these links can be found in \cite{LSZ} (see also \cite{SZ-JFA}).
The precise statement of our measurability criterion is as follows.

\begin{prop}\label{measurability criterion} Let $0\leq V\in\mathcal{L}(H),$ 
$A\in\mathcal{L}(H)$ and $\alpha>1$ be such that
\begin{equation}
\label{meas eq crit}
{\rm Tr}(AVe^{-(nV)^{-\alpha}})=z\log(n)+O(1),\quad n\to\infty.
\end{equation}
\begin{enumerate}[{\rm (a)}]
\item\label{mesa} If $V\in\mathcal{L}_{1,\infty},$ then $\varphi(AV)=z$ 
for every normalised trace $\varphi$ on $\mathcal{L}_{1,\infty}.$
\item\label{mesb} If $V\in\mathcal{M}_{1,\infty},$ then ${\rm Tr}_{\omega}(AV)=z$ 
for every Dixmier trace ${\rm Tr}_{\omega}$ on $\mathcal{M}_{1,\infty}.$
\end{enumerate}
\end{prop}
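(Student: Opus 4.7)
The plan is to reduce both (a) and (b) to the single partial-sum estimate
$$
\sum_{k=0}^n \lambda(k, AV) = z\log(n+1) + O(1), \quad n\to\infty,
$$
which by Proposition \ref{komm description} yields (a) at once, and via the Lidskii-type representation of Dixmier traces
$$
{\rm Tr}_\omega(T) = \omega\Big(\Big\{\frac{1}{\log(2+n)}\sum_{k=0}^n \lambda(k,T)\Big\}_{n\geq 0}\Big), \quad T\in\mathcal{M}_{1,\infty}
$$
(see \cite{LSZ}), yields (b); indeed, for (b) even an $o(\log n)$ remainder is enough. The estimate itself I would extract from the heat-kernel hypothesis \eqref{meas eq crit} in three steps.

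The first step is to replace the heat kernel by the sharp spectral cut-off of $V$: I would show
$$
{\rm Tr}(AV e^{-(nV)^{-\alpha}}) = {\rm Tr}(AV E_V(1/n,\infty)) + O(1),
$$
by estimating $\|V f_n(V)\|_1$ with $f_n(\lambda) = e^{-(n\lambda)^{-\alpha}} - \chi_{(1/n,\infty)}(\lambda)$. Splitting into regimes by whether $\mu(k,V) > 1/n$ or $\mu(k,V) \leq 1/n$: the first regime is controlled by $|1-e^{-x}| \leq x$ at $x = (n\mu(k,V))^{-\alpha}$, which produces a $k$-th eigenvalue $\mu(k,V)(n\mu(k,V))^{-\alpha}$, and the hypothesis $\alpha > 1$ makes the resulting sum over $k \leq n_V(1/n)$ bounded; the second regime is absorbed by the super-polynomial decay of $e^{-(n\lambda)^{-\alpha}}$ as $n\lambda \to 0^+$. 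For $V \in \mathcal{M}_{1,\infty}$ the same split gives an $o(\log n)$ bound in place of $O(1)$. Boundedness of $A$ then passes the estimate into the trace.

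The second step is the crux:
$$
{\rm Tr}(AV E_V(1/n,\infty)) = \sum_{k=0}^{n_V(1/n)-1} \lambda(k, AV) + O(1).
$$
Here I would appeal to the theory of $V$-modulated operators (\cite{CarSuk, KLPS}, and \cite{LSZ}): a direct Hilbert--Schmidt computation using \eqref{tlp} gives $\|AV(1+tV)^{-1}\|_2 = O(t^{-1/2})$, i.e.\ $AV$ is $V$-modulated, and the general Tauberian sum formula for modulated operators identifies the compressed trace on the left with the sorted eigenvalue partial sum on the right up to a uniform remainder. The analog for $V \in \mathcal{M}_{1,\infty}$ is handled by the $\mathcal{M}_{1,\infty}$-version of the modulation framework.

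The third step is a reindexing: from \eqref{tlp}, $n_V(1/n) = O(n)$, and the bound $|\lambda(k, AV)| \leq \|A\|\mu(k,V) = O(1/(k+1))$ controls the fluctuations between consecutive values $n_V(1/n)$ and $n_V(1/(n+1))$, so the estimate obtained along the subsequence $\{n_V(1/n)\}$ transfers to all $n$. The main obstacle is plainly Step 2: the spectral subspaces of $V$ are not intrinsically aligned with those of $AV$, so a priori the compressed trace and the sorted eigenvalue sum could disagree by amounts of order $\log n$. The $V$-modulation condition --- a quantitative Hilbert--Schmidt bound on $AV$ against the resolvent of $V$ --- is what forces these two quantities to coincide up to $O(1)$, and it is precisely this bridge between the heat-semigroup formula \eqref{meas eq crit} and the structural eigenvalue characterisation in Proposition \ref{komm description} that makes the proposition work.
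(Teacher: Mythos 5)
Your treatment of part (a) is correct and follows essentially the same path as the paper: split off the heat kernel from the sharp spectral cut-off (the paper's Lemma \ref{estimate} and the $O(1)$ estimate in the proof of Proposition \ref{measurability criterion}\eqref{mesa}), and then invoke the $V$-modulated operator machinery of \cite{LSZ} (the paper's Lemma \ref{modulated lemma}, citing Theorem 11.2.3 in \cite{LSZ}) to identify the compressed trace with the sorted eigenvalue partial sum, up to $O(1)$. Your Steps 2 and 3 are subsumed into that single lemma, but the substance is the same, including the Hilbert--Schmidt estimate establishing that $AV$ is $V$-modulated and the reindexing from $n_V(1/n)$ to $n$ via $n_V(1/n)=O(n)$.

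For part (b), however, there is a genuine gap. Your plan is to extract a pointwise estimate $\sum_{k=0}^n\lambda(k,AV)=z\log n+o(\log n)$ and then feed it into a Lidskii-type formula. Neither step goes through for $V\in\mathcal{M}_{1,\infty}$. First, the $V$-modulated partial-sum theorem you rely on in Step 2 requires the $\mathcal{L}_{1,\infty}$ bound $\mu(k,V)\leq C/(k+1)$ both to verify the modulation condition usefully and, more importantly, in the reindexing: for $V\in\mathcal{M}_{1,\infty}$ one only has $\mu(k,V)=O(\log(k+2)/(k+1))$, hence $n_V(1/n)$ can be of order $n\log n$, and then
\[
\sum_{k=n}^{n_V(1/n)}\mu(k,V)\;\asymp\;\sum_{k=n}^{n\log n}\frac{\log k}{k}\;\asymp\;\log n\cdot\log\log n,
\]
which is not $o(\log n)$; the bridge between the compressed trace and the sorted partial sum breaks by more than a $\log n$ margin pointwise. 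Second, the heat-kernel-to-cutoff comparison in your Step 1 also does not give a pointwise $o(\log n)$ remainder for $\mathcal{M}_{1,\infty}$: the analogue of Lemma \ref{estimate} only yields ${\rm Tr}(e^{-(nV)^{-\alpha}})=O(n^{1+\varepsilon})$, and the paper's Lemma \ref{lt1} shows that the relevant error term is only small after applying a dilation-invariant extended limit $\omega$, not pointwise. The paper therefore abandons the eigenvalue-partial-sum route entirely for (b): it proves the $\omega$-averaged identity $\omega(\{{\rm Tr}(AVe^{-(nV)^{-\alpha}})/\log n\})=\omega(\{{\rm Tr}(A(V-1/n)_+)/\log n\})$ using dilation invariance (Lemma \ref{lt2}), and then invokes the characterisation of Dixmier traces as heat-semigroup functionals $\xi_\omega$ (Theorems 8.2.5 and 8.3.6, Lemma 8.5.3 in \cite{LSZ}). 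Those $\omega$-averaged and dilation-invariance arguments are what replace the pointwise Tauberian estimate, and they are not interchangeable with your Step 2.
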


We require several Lemmas before presenting the proof of Proposition \ref{measurability criterion}.

\begin{lem}\label{estimate} If $0\leq V\in\mathcal{L}_{1,\infty},$ then, for every $\alpha>1,$ we have
$${\rm Tr}(V^{\alpha}(1-e^{-(nV)^{-\alpha}}))=O(n^{1-\alpha}),\quad{\rm Tr}(e^{-(nV)^{-\alpha}}))=O(n),\quad n\to\infty.$$
\end{lem}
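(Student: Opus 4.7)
The plan is to reduce both estimates to elementary sums over the singular values of $V$ by diagonalising and then using the hypothesis $\mu(k,V)=O((k+1)^{-1})$ to compare with convergent integrals.

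First I would write $V=\mathrm{diag}(\{\mu(k,V)\}_{k\ge 0})$ in an eigenbasis, so that
\begin{equation*}
\operatorname{Tr}\bigl(V^{\alpha}(1-e^{-(nV)^{-\alpha}})\bigr)=\sum_{k\ge 0}\mu(k,V)^{\alpha}\bigl(1-e^{-(n\mu(k,V))^{-\alpha}}\bigr),
\end{equation*}
and similarly
\begin{equation*}
\operatorname{Tr}\bigl(e^{-(nV)^{-\alpha}}\bigr)=\sum_{k\ge 0}e^{-(n\mu(k,V))^{-\alpha}}.
\end{equation*}
Fix a constant $C>0$ with $\mu(k,V)\le C/(k+1)$ for all $k$, as supplied by $V\in\mathcal{L}_{1,\infty}$.

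For the first estimate I would apply the two-sided elementary bound $1-e^{-x}\le\min(x,1)$ for $x\ge 0$, which gives $\mu(k,V)^{\alpha}(1-e^{-(n\mu(k,V))^{-\alpha}})\le\min(n^{-\alpha},\mu(k,V)^{\alpha})$. The natural cutoff is $k\sim Cn$, where the two arguments of $\min$ coincide. For $k\le Cn$ I bound each of the at most $\lfloor Cn\rfloor+1$ terms by $n^{-\alpha}$, contributing $O(n^{1-\alpha})$. For $k>Cn$ I use $\mu(k,V)^{\alpha}\le C^{\alpha}(k+1)^{-\alpha}$ and compare with the integral $\int_{Cn}^{\infty}x^{-\alpha}\,dx=(Cn)^{1-\alpha}/(\alpha-1)$, which is finite precisely because $\alpha>1$; this again contributes $O(n^{1-\alpha})$. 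Combining gives the first claim.

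For the second estimate I use $\mu(k,V)\le C/(k+1)$ to conclude $(n\mu(k,V))^{-\alpha}\ge(k+1)^{\alpha}/(Cn)^{\alpha}$, hence
\begin{equation*}
\sum_{k\ge 0}e^{-(n\mu(k,V))^{-\alpha}}\le\sum_{k\ge 0}e^{-(k+1)^{\alpha}/(Cn)^{\alpha}}\le\int_{0}^{\infty}e^{-t^{\alpha}/(Cn)^{\alpha}}\,dt=Cn\,\Gamma\!\left(1+\tfrac{1}{\alpha}\right),
\end{equation*}
where the integral comparison is valid because the integrand is strictly decreasing and the substitution $u=t/(Cn)$ produces a finite Gamma integral (again because $\alpha>1$, though here any $\alpha>0$ would suffice). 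This is $O(n)$, as required.

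The proof is essentially routine once the correct cutoff $k\sim n$ is identified; the only substantive ingredient is the inequality $\mu(k,V)\le C/(k+1)$ coming from $V\in\mathcal{L}_{1,\infty}$, together with the convergence of the tail integrals which is exactly where the hypothesis $\alpha>1$ enters the first estimate. I would not expect any serious obstacle; the main thing to watch is uniformity of the constant $C$ across the two regimes of the split.
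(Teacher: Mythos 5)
Your proof is correct. The approach is essentially the paper's: both reduce to scalar sums over the singular values, use the worst-case bound $\mu(k,V)\le C/(k+1)$ coming from $V\in\mathcal{L}_{1,\infty}$, and finish with comparison against convergent integrals. The one technical difference is in how the first estimate is organized: the paper introduces the auxiliary function $g(x)=x(1-e^{-1/x})$, verifies it is increasing, uses this monotonicity to dominate $V$ by the worst-case operator $W$ with $\mu(k,W)=\|V\|_{1,\infty}/(k+1)$, and then compares the resulting (monotone) series with a single integral; you instead use the cruder but more elementary pointwise bound $1-e^{-x}\le\min(x,1)$ and split the sum at the crossover $k\sim Cn$, treating the two regimes separately. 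Your route avoids the need to verify the monotonicity of $g$, at the price of splitting the argument into two cases; the paper's route is slicker once $g$'s monotonicity is established. Both are correct and the estimates are identical in content. (For the second bound the two proofs are the same, and your observation that $\alpha>1$ is not actually needed there is accurate.)
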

\begin{proof} By the assumption, we have $\mu(k,V)\leq\|V\|_{1,\infty}/(k+1)$ for all 
$k\geq0.$ Select $W\geq V$ (with the same eigenbasis) such that 
$\mu(k,W)=\|V\|_{1,\infty}/(k+1)$ for all $k\geq0.$ An elementary computation shows that the mapping
$g:x\to x(1-e^{-x^{-1}}),\quad x\geq0$,
is increasing. Since $V$ and $W$ commute, $(nV)^{\alpha}\leq (nW)^{\alpha}$ 
for all $n\geq1$ and it follows that $g((nV)^{\alpha})\leq g((nW)^{\alpha})$. Therefore,
\begin{align*}
{\rm Tr}(V^{\alpha}(1-e^{-(nV)^{-\alpha}}))
&=n^{-\alpha}{\rm Tr}(g((nV)^{\alpha}))\leq n^{-\alpha}{\rm Tr}(g((nW)^{\alpha}))\\
&=\|V\|_{1,\infty}^{\alpha}\sum_{k=1}^{\infty}k^{-\alpha}(1-\exp(-(n\|V\|_{1,\infty})^{-\alpha}k^{\alpha}))\\
&\leq\|V\|_{1,\infty}^{\alpha}\int_0^{\infty}s^{-\alpha}(1-\exp(-(n\|V\|_{1,\infty})^{-\alpha}s^{\alpha}))ds\\
&=n^{1-\alpha}\|V\|_{1,\infty}\int_0^{\infty}u^{-\alpha}(1-\exp(-u^{\alpha}))du.
\end{align*}
Here, in the last step we used the substitution $s=n\|V\|_{1,\infty}u.$ This proves the first equality.

The second equality is proved as follows.
\begin{align*}
{\rm Tr}(e^{-(nV)^{-\alpha}}))&\leq {\rm Tr}(e^{-(nW)^{-\alpha}}))=\sum_{k=1}^{\infty}\exp(-(n\|V\|_{1,\infty})^{-\alpha}k^{\alpha})\\
&\leq\int_0^{\infty}\exp(-(n\|V\|_{1,\infty})^{-\alpha}s^{\alpha})ds=n\|V\|_{1,\infty}\int_0^{\infty}\exp(-u^{\alpha})du.
\end{align*}
In the last step we again used the substitution $s=n\|V\|_{1,\infty}u.$ 
\end{proof}


\begin{lem}\label{modulated lemma} 
If $0\leq V\in\mathcal{L}_{1,\infty}$ and if $A\in\mathcal{L}(H),$ then
$$
\sum_{k=0}^n\lambda(k,AV)={\rm Tr}(AVE_V[\frac1n,\infty))+O(1),\quad n\to\infty.
$$
\end{lem}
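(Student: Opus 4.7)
Set $P_n := E_V[\tfrac{1}{n}, \infty)$, a finite-rank projection of rank $N_n := n_V(\tfrac{1}{n}) \leq \|V\|_{1,\infty}\,n = O(n)$. The plan is to reduce ${\rm Tr}(AVP_n)$ to a sum of eigenvalues via Lidskii's theorem and then show it differs from $\sum_{k=0}^n\lambda(k,AV)$ by at most $O(1)$.

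Since $AVP_n$ has rank at most $N_n$, a block-matrix computation in the decomposition $H = P_nH \oplus (1-P_n)H$, using that $V$ and $P_n$ commute, shows the non-zero eigenvalues of $AVP_n$ coincide (with algebraic multiplicities) with those of the compression $Y_n := P_nAVP_n$; Lidskii's theorem for finite-rank operators then gives ${\rm Tr}(AVP_n) = \sum_{k=0}^{N_n-1}\lambda(k, Y_n)$. The heart of the proof is showing
\[
\sum_{k=0}^{N_n-1}\lambda(k, AV) = \sum_{k=0}^{N_n-1}\lambda(k, Y_n) + O(1).
\]
Writing $AV = Y_n + E_n$ with $E_n := (1-P_n)AVP_n + AV(1-P_n)$, the key estimates are $\mu(k, AV(1-P_n)) \leq \|A\|\,\mu(k+N_n, V) \leq C/(k+N_n)$ and an analogous bound on $(1-P_n)AVP_n$ obtained by splitting $V = V^{1/2}\cdot V^{1/2}$ and using $V^{1/2}\in\mathcal{L}_{2,\infty}$ (from (\ref{lpi mult})). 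Combined with Weyl's majorization $|\lambda(k,\cdot)| \leq \mu(k,\cdot)$ and a Schur triangularization of $AV$ adapted to the $P_n\oplus(1-P_n)$ decomposition, these yield the $O(1)$ bound. Finally, since $N_n = O(n)$ and $|\lambda(k,AV)| \leq \mu(k,AV) \leq C/(k+1)$, the tail difference $\sum_{k=0}^n - \sum_{k=0}^{N_n-1}$ has absolute value at most $C\log\bigl(\max(n, N_n)/\min(n, N_n)\bigr) = O(1)$.

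The main obstacle is the eigenvalue comparison between $AV$ and its compression $Y_n$. For non-self-adjoint $AV$, there is no Cauchy interlacing to relate $\lambda(k, AV)$ directly to $\lambda(k, Y_n)$, and one must exploit the singular-value decay of the off-diagonal perturbation $E_n$ summed to index $N_n$. The cleanest way to make this rigorous is through the ``$V$-modulated operator'' framework developed in \cite{LSZ}, where operators $T$ satisfying an estimate $\|T(1-P_n)\|_2 = O(n^{-1/2})$ (which $T = AV$ clearly does here) are shown to have partial eigenvalue sums captured precisely by the trace of their spectral cutoffs up to an $O(1)$ error.
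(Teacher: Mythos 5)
Your proposal correctly identifies the $V$-modulated framework as the right tool, and the paper does indeed use it, but your direct perturbation argument (which occupies most of the proof) has a genuine gap, and the way you invoke the modulated theory at the end does not quite close it. The central false step is the assertion ``Weyl's majorization $|\lambda(k,\cdot)|\leq\mu(k,\cdot)$.'' For non-self-adjoint compact operators only the submajorization $\sum_{k=0}^{n}|\lambda(k,T)|\leq\sum_{k=0}^{n}\mu(k,T)$ holds; the pointwise inequality $|\lambda(k,T)|\leq\mu(k,T)$ is false (a nilpotent Jordan block with a large off-diagonal entry already defeats it). This breaks both your eigenvalue-perturbation step and your tail estimate comparing $\sum_{k=0}^{n}$ with $\sum_{k=0}^{N_n-1}$: from $\mu(k,AV)=O(1/(k+1))$ one only extracts $|\lambda(k,AV)|=O(\log k/k)$ via Ces\`aro averaging of Weyl, which gives a tail of order $\log n$, not $O(1)$. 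The ``Schur triangularization adapted to the $P_n\oplus(1-P_n)$ decomposition'' is likewise not available: a Schur basis for $AV$ has no reason to respect a prescribed orthogonal decomposition, so you cannot read off the compression's eigenvalues from it.

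The paper sidesteps all of this by invoking the $V$-modulated theorem (Theorem 11.2.3 of \cite{LSZ}) in a sharper form than you use it. That theorem does not merely say that partial eigenvalue sums are ``captured by traces of spectral cutoffs''; it gives $\sum_{k=0}^n\lambda(k,AV)=\sum_{k=0}^n\mu(k,V)\langle Ae_k,e_k\rangle+O(1)$, replacing the eigenvalue sum by a sum of \emph{diagonal entries} in an eigenbasis of $V$. These diagonal terms are bounded pointwise by $\|A\|_\infty\mu(k,V)\leq C/(k+1)$, so the comparison between summing to $n$ and summing to $m(n)={\rm Tr}(E_V[1/n,\infty))$ costs only $O(1)$ — exactly the estimate your pointwise Weyl claim was meant to supply but cannot. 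So the correct route is to apply the modulated theorem once, at the outset, and never attempt to compare $\lambda(k,AV)$ with $\lambda(k,P_nAVP_n)$ at all; the compression $Y_n$ never appears in the paper's argument.
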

\begin{proof} 
Recall that a Hilbert-Schmidt operator $W$ is said to be $V-$modulated (in the sense of \cite[Definition 11.2.1]{LSZ}) if
$$\sup_{t>0}t^{1/2}\|W(1+tV)^{-1}\|_2<\infty.$$
We show that the operator $AV$ is $V-$modulated. Indeed, we have
$$
\sup_{t>0}t^{1/2}\|AV(1+tV)^{-1}\|_2
\leq\|A\|_{\infty}\sup_{t>0}t^{1/2}\|V(1+tV)^{-1}\|_2
\leq\|A\|_{\infty}\sup_{t>0}t^{1/2}\left\|\Big\{\frac1{\mu(k,V)^{-1}+t}\Big\}_{k\geq0}\right\|_2<\infty.
$$
Let $e_k$, $k\geq0$, be an eigenbasis of $V.$ Since $AV$ is $V-$modulated and since $V\geq0,$ it follows from Theorem 11.2.3 in \cite{LSZ} that
$$
\sum_{k=0}^n\lambda(k,AV)=\sum_{k=0}^n\mu(k,V)\langle Ae_k,e_k\rangle+O(1).
$$
By definition, $E_V[\frac1n,\infty)$ is the projection onto 
$e_k$, $0\leq k\leq m(n)$, where $m(n)={\rm Tr}(E_V[1/n,\infty))$. 
Since $V\in\mathcal{L}_{1,\infty}$, we have $\mu(k,V)\leq \frac {C}{k+1}$ for 
some constant $C>0$ and all $k\ge 0$. 
This inequality guarantees that $m(n)=O(n)$ as $n\to\infty$, 
by Equation \eqref{tlp}, in particular, there is a constant $C<\infty$ such that 
$m(n)\leq Cn$, for all $n\ge 1$. It may also happen that $m(n)<n$. 

If $m(n)<n,$ then (since $\mu(k,V)<\frac1n$ for $k>m(n)$), we have
$$\sum_{k=m(n)+1}^n\mu(k,V)\leq\sum_{k=m(n)+1}^n\frac1n\leq 1.$$
If $m(n)\geq n,$ then
$$\sum_{k=n+1}^{m(n)}\mu(k,V)\leq\sum_{k=n+1}^{Cn}\mu(k,V)\leq\|V\|_{1,\infty}\sum_{k=n+1}^{Cn}\frac1k=O(1).$$
In either case, we have
$$|\sum_{k=0}^n\mu(k,V)-\sum_{k=0}^{m(n)}\mu(k,V)|=O(1).$$

With these observations, we have the equality
$$
{\rm Tr}(AVE_V[\frac1n,\infty))=\sum_{k=0}^{m(n)}\mu(k,V)\langle Ae_k,e_k\rangle.
$$
It follows that
\begin{align*}
\left|\sum_{k=0}^n\lambda(k,AV)-{\rm Tr}(AVE_V[\frac1n,\infty))
\right|
&=\left|\sum_{k=0}^n\mu(k,V)\langle Ae_k,e_k\rangle+O(1)-\sum_{k=0}^{m(n)}\mu(k,V)\langle Ae_k,e_k\rangle\right|\\
&\leq\|A\|_{\infty}\left|\sum_{k=0}^n\mu(k,V)-\sum_{k=0}^{m(n)}\mu(k,V)\right|+O(1)=O(1).
\end{align*}
\end{proof}
The above Lemmas allow us to prove the first statement of Proposition \ref{measurability criterion}.

\smallskip

\begin{proof}[Proof of Proposition \ref{measurability criterion} \eqref{mesa}] 
We start by showing that
\begin{equation}\label{dds}
|{\rm Tr}(AVe^{-(nV)^{-\alpha}})-{\rm Tr}(AVE_V[1/n,\infty))
=O(1),\quad n\to\infty.
\end{equation}
\begin{align*}\mbox{Indeed,}\ \ 
&\left|{\rm Tr}\left(AVe^{-(nV)^{-\alpha}}\right)-{\rm Tr}(AVE_V[1/n,\infty))
\right|\\
&\qquad\leq\left|{\rm Tr}(AV(e^{-(nV)^{-\alpha}}-1)E_V[1/n,\infty))\right|
+\left|{\rm Tr}(AVe^{-(nV)^{-\alpha}}E_V[0,1/n))\right|\\
&\qquad\qquad\leq\|A\|_{\infty}\left(\,\left|{\rm Tr}(V(e^{-(nV)^{-\alpha}}-1)E_V[1/n,\infty))\right|+\left|{\rm Tr}(Ve^{-(nV)^{-\alpha}}E_V[0,1/n))\right|\,\right).
\end{align*}
In order to complete the proof, we observe that the spectral theorem yields 
$VE_V[0,1/n)\leq 1/n$. Similarly, for any $\alpha>1$ we have the inequality 
$\lambda\chi_{(1/n,\infty)}(\lambda)\leq n^{\alpha-1}\lambda^{\alpha}$, 
where $\chi_{(1/n,\infty)}$ is the indicator function of the interval ${(1/n,\infty)}$, and 
so the spectral theorem yields $VE_V[1/n,\infty)\leq n^{\alpha-1}V^{\alpha}$.

It now follows that
\begin{align*}
&
\left|{\rm Tr}\left(AVe^{-(nV)^{-\alpha}}\right)-{\rm Tr}(AVE_V[1/n,\infty))\right|
\\&\qquad\qquad
\leq\|A\|_{\infty}\left(n^{\alpha-1}{\rm Tr}(V^{\alpha}(1-e^{-(nV)^{-\alpha}}))
+\frac1n{\rm Tr}(e^{-(nV)^{-\alpha}})\right)=O(1).
\end{align*}
Here, the last equality holds by Lemma \ref{estimate}. Appealing to the assumption \eqref{meas eq crit} and Lemma \ref{modulated lemma}, we rewrite the preceding inequality as
$
\sum_{k=0}^n\lambda(k,AV)=z\log(n)+O(1)
$
and conclude using Proposition \ref{komm description}.
\end{proof}

To prove the second part of Proposition \ref{measurability criterion}, we need the following lemmas.

\begin{lem}\label{lt1} 
Let $\omega$ be a dilation invariant extended limit on $l_{\infty}.$ 
For every $0\leq V\in\mathcal{M}_{1,\infty}$ and $\alpha>1$, we have
$$
\omega\Big(\Big\{\frac1{n\log(n)}{\rm Tr}(e^{-(nV)^{-\alpha}})\Big\}_{n\geq0}\Big)=0.
$$
\end{lem}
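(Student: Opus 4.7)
My plan is to bound ${\rm Tr}(e^{-(nV)^{-\alpha}})$ by the counting function $m(n):={\rm Tr}(E_V[1/n,\infty))$ plus a tail integral against dilates of $m$, and then conclude from dilation invariance of $\omega$ combined with an $\mathcal{M}_{1,\infty}$-driven sparsity of the ``peaks'' of $m(n)/(n\log n)$.

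First I decompose $e^{-(nV)^{-\alpha}}=e^{-(nV)^{-\alpha}}E_V[1/n,\infty)+e^{-(nV)^{-\alpha}}E_V[0,1/n)$. The first piece contributes at most $m(n)$ to the trace since $e^{-(nV)^{-\alpha}}\leq 1$. On $\{V<1/n\}$ one has $nV\leq 1$, and the elementary inequality $e^{-x^{-\alpha}}\leq x^\alpha/e$ on $(0,1]$---obtained by checking that $x\mapsto e^{-x^{-\alpha}}/x^\alpha$ is monotone increasing on $(0,1]$, with value $1/e$ at $x=1$---bounds the second piece by $(n^\alpha/e){\rm Tr}(V^\alpha E_V[0,1/n))$. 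The Abel summation identity
$$\sum_{k>m(n)}\mu(k,V)^\alpha=\alpha\int_n^\infty u^{-\alpha-1}(m(u)-m(n))\,du,$$
followed by the substitution $u=nv$, then yields
$${\rm Tr}(e^{-(nV)^{-\alpha}})\leq(1-e^{-1})m(n)+\frac{\alpha}{e}\int_1^\infty v^{-\alpha-1}m(nv)\,dv.$$
The pointwise bound $m(u)\leq Cu\log u$ for $V\in\mathcal{M}_{1,\infty}$ (itself a direct consequence of $\sum_{k=0}^N\mu(k,V)=O(\log N)$) makes the integrand, normalised by $n\log n$, dominated uniformly in $n$ by the $v$-integrable function $Cv^{-\alpha}$; here $\alpha>1$ is essential.

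Dividing by $n\log n$, applying $\omega$ and interchanging with the integral by dominated convergence, the problem reduces to showing $\omega(m(nv)/(n\log n))=0$ for each $v\geq 1$. Writing $r_n:=m(n)/(n\log n)$, the key claim is that the peak set $A_\epsilon:=\{n:r_n\geq\epsilon\}$ has log-scale Banach upper density zero. Indeed, if $A_\epsilon$ had log-density $\delta>0$, one could extract peaks $n_0<n_1<\cdots$ with $\log n_{j+1}-\log n_j=O(1/\delta)$ and $m(n_j)\geq\epsilon n_j\log n_j$; then
$$\sum_{k\leq m(n_J)}\mu(k,V)\geq\sum_j\frac{m(n_j)-m(n_{j-1})}{n_j}\gtrsim\epsilon\sum_j\log n_j\gtrsim\delta(\log n_J)^2,$$
which would violate the $\mathcal{M}_{1,\infty}$-bound $\sum_{k\leq m(n_J)}\mu(k,V)\leq C\log n_J$ for $J$ large. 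Dilation invariance on $\mathbb{N}$ corresponds to log-translation invariance, under which extended limits are dominated by log-Banach upper density, so $\omega(\chi_{A_\epsilon})=0$ and hence $\omega(r_n)=0$; the analogous statement for $r_{nv}$ follows since $\{n:r_{nv}\geq\epsilon\}=v^{-1}A_\epsilon\cap\mathbb{N}$ also has log-density zero.

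The main obstacle is the final step. Pointwise the estimates only give $y_n=O(1)$, so the vanishing under $\omega$ is genuinely an $\omega$-theoretic phenomenon. The essential new input, absent in the $\mathcal{L}_{1,\infty}$-setting of Lemma \ref{estimate} (where the pointwise bound is already $O(n)$ and the conclusion is immediate), is the sparsity of $A_\epsilon$ in log-density terms, extracted from the $\mathcal{M}_{1,\infty}$-inequality and combined with the standard domination of dilation-invariant states by log-scale Banach density.
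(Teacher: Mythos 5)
Your overall plan --- reduce matters to the claim $\omega\bigl(\{m(n)/(n\log n)\}_n\bigr)=0$ with $m(n)={\rm Tr}\,E_V[1/n,\infty)$, and control the rest by an integral against dilates of $m$ --- is sensible, and the Abel-summation manipulation leading to ${\rm Tr}(e^{-(nV)^{-\alpha}})\leq(1-e^{-1})m(n)+\frac{\alpha}{e}\int_1^\infty v^{-\alpha-1}m(nv)\,dv$ is correct as far as it goes. However, there is a genuine gap at the crucial final step. You assert that dilation-invariant extended limits are ``dominated by log-Banach upper density'' and hence $\omega(\chi_{A_\epsilon})=0$ once $A_\epsilon$ has log-density zero. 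This is not a standard fact for the class of $\omega$'s considered in the paper: the invariance is only under $\sigma_k$, $k\geq2$, and the natural upper bound one gets from this invariance is of the form $\omega(\chi_A)\leq\lim_{m}\limsup_n\frac1m\#\{j\leq m:\lfloor n/k^j\rfloor\in A\}$, i.e.\ an average along a geometric progression of sample points, not the Banach upper density of $\log A$. Establishing that your ``log-Banach upper density'' controls this quantity (or replacing it by the correct one) requires an argument that is currently missing, so the step from the peak-sparsity estimate to $\omega(\chi_{A_\epsilon})=0$ does not yet go through. Two smaller issues compound this: the peak extraction needs the gaps $\log n_j-\log n_{j-1}$ bounded \emph{below} by something like $\log(2C/\epsilon)$ (otherwise $m(n_j)-m(n_{j-1})$ need not be comparable to $m(n_j)$), which you leave implicit; and the interchange of $\omega$ with $\int_1^\infty\cdots\,dv$ requires a Bochner/Riemann-sum justification, since $\omega$ is a state on $l_\infty$, not a measure.

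For comparison, the paper avoids all of this with a small-parameter device: it uses the pointwise bound $e^{-(nt)^{-\alpha}}\leq\chi_{(\varepsilon/n,\infty)}(t)+4\varepsilon(\min\{nt,1\})^2$, applies the functional calculus and $\omega$, and then the first term vanishes by a single citation to \cite[Lemma 8.2.8]{LSZ} (which is essentially the claim you are trying to prove about $m(n)/(n\log n)$) while the second term is manifestly $O(\varepsilon)$ by \cite[Lemma 8.4.2(b)]{LSZ}; letting $\varepsilon\to0$ finishes the proof. You are in effect re-proving \cite[Lemma 8.2.8]{LSZ} inline. That is a legitimate alternative route, but as written the density-domination step is unjustified and the argument is not yet complete.
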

\begin{proof} Fix $\varepsilon\in[0,1]$ and observe that
$
e^{-t^{-\alpha}}\leq4\varepsilon t^2,\quad 0\leq t\leq\varepsilon.
$
Hence, for every $t>0,$ we have
$$
e^{-(nt)^{-\alpha}}\leq\chi_{(\frac{\varepsilon}n,\infty)}(t)+4\varepsilon(nt)^2\chi_{[0,\frac{\varepsilon}{n}]}(t)\leq \chi_{(\frac{\varepsilon}n,\infty)}(t)+4\varepsilon(\min\{nt,1\})^2.
$$
Applying the functional calculus, we infer from the inequality above that
$$
e^{-(nV)^{-\alpha}}\leq E_V({\varepsilon}/n,\infty)+4\varepsilon\min\{(nV),1\}^2.
$$
Hence, using the fact that $\omega$ is a positive functional, we obtain 
$$
\omega\left(\Big\{\frac1{n\log(n)}{\rm Tr}(e^{-(nV)^{-\alpha}})\Big\}_{n\geq0}\right)
\leq\omega\left(\Big\{\frac{n_V(\frac{\varepsilon}{n})}{n\log(n)}\Big\}_{n\geq0}\right)
+4\varepsilon\omega\left(\Big\{\frac{\min\{(nV),1\}^2}{n\log(n)}\Big\}_{n\geq0}\right).
$$
Here, the second term is well defined thanks to Lemma 8.4.2 (b) in \cite{LSZ}. By Lemma 8.2.8 in \cite{LSZ}, the first term vanishes for every $\varepsilon>0.$ Letting $\varepsilon\to0,$ we conclude the proof.
\end{proof}

\begin{lem}\label{lt2} Let $\omega$ be a dilation invariant extended limit on $l_{\infty}$, $\alpha>1$, 
and introduce the notation $T_+$ for the positive part of a self adjoint operator $T$. For every $A\in\mathcal{L}(H)$ and for every $0\leq V\in\mathcal{M}_{1,\infty},$ we have
$$
\omega\Big(\Big\{\frac1{\log(n)}{\rm Tr}(AVe^{-(nV)^{-\alpha}})\Big\}_{n\geq0}\Big)
=\omega\Big(\Big\{\frac1{\log(n)}{\rm Tr}(A(V-1/n)_+)\Big\}_{n\geq0}\Big).
$$
\end{lem}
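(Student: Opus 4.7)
The plan is to show that $\omega$ annihilates the sequence
$$\frac{1}{\log n}\,{\rm Tr}\bigl(A\bigl[Ve^{-(nV)^{-\alpha}} - (V - 1/n)_+\bigr]\bigr),$$
from which the desired equality follows by linearity of $\omega$. The natural way to compare the two functions of $V$ is to split along the spectral cut at $1/n$: writing $\mathbf{1} = E_V[0, 1/n) + E_V[1/n, \infty)$, the function $(V-1/n)_+$ vanishes on the first projection, while on the second the exponential factor $e^{-(nV)^{-\alpha}}$ is close to $1$, so the two functions are close on each piece for complementary reasons.

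On the range of $E_V[0, 1/n)$ the difference reduces to $Ve^{-(nV)^{-\alpha}} E_V[0,1/n)$; using $V E_V[0, 1/n) \leq (1/n) E_V[0, 1/n)$ together with $0 \leq e^{-(nV)^{-\alpha}} \leq 1$, its trace is bounded by $(1/n){\rm Tr}(e^{-(nV)^{-\alpha}})$. On the range of $E_V[1/n, \infty)$ the difference equals $[1/n - V(1 - e^{-(nV)^{-\alpha}})]E_V[1/n,\infty)$; combining the elementary inequality $1 - e^{-x} \leq x$ with $V \geq 1/n$ and the hypothesis $\alpha > 1$ yields
$$V(1 - e^{-(nV)^{-\alpha}})E_V[1/n,\infty) \leq n^{-\alpha}V^{1-\alpha}E_V[1/n,\infty) \leq \frac{1}{n}E_V[1/n,\infty),$$
so this piece is nonnegative with operator norm at most $1/n$ on a projection of rank $n_V(1/n)$, giving a trace-norm bound of $n_V(1/n)/n$. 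The standard estimate $|{\rm Tr}(AB)| \leq \|A\|_\infty \|B\|_1$ then gives
$$\bigl|{\rm Tr}(A[Ve^{-(nV)^{-\alpha}} - (V-1/n)_+])\bigr| \leq \|A\|_\infty\Bigl(\frac{{\rm Tr}(e^{-(nV)^{-\alpha}})}{n} + \frac{n_V(1/n)}{n}\Bigr).$$

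Dividing by $\log n$ and applying the positive state $\omega$, the heat-kernel term vanishes by Lemma \ref{lt1}, while the counting-function term vanishes by Lemma 8.2.8 of \cite{LSZ} (already invoked in the proof of Lemma \ref{lt1}). The most delicate step is the control of $V(1 - e^{-(nV)^{-\alpha}})$ on $E_V[1/n, \infty)$: the crude bound $V(1-e^{-(nV)^{-\alpha}}) \leq V$ would be far too weak, and only the interplay of $1 - e^{-x} \leq x$ with the restriction $V \geq 1/n$ and the hypothesis $\alpha > 1$ produces an $O(1/n)$ operator-norm estimate whose trace contribution is precisely of the $n_V(1/n)/n$ form handled by Lemma 8.2.8.
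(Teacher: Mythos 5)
Your proof is correct, and it takes a genuinely different and cleaner route than the paper. The paper proceeds by a two-sided squeeze: it reduces to positive $A$, establishes the numerical inequality $e^{-\varepsilon^{\alpha}}(t-1/{\varepsilon n})_+\leq te^{-(nt)^{-\alpha}}\leq(t-1/n)_++\tfrac1n\chi_{[1/n,\infty)}(t)+\tfrac1ne^{-(nt)^{\alpha}}$, converts it via functional calculus, and obtains the upper estimate directly and the lower estimate with an auxiliary parameter $\varepsilon$, at which point it explicitly invokes dilation invariance of $\omega$ to replace $(V-1/(n\varepsilon))_+$ by $(V-1/n)_+$ before letting $\varepsilon\to0$. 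You instead show directly that the difference $T_n=Ve^{-(nV)^{-\alpha}}-(V-1/n)_+$ satisfies $\|T_n\|_1\leq\tfrac1n\bigl({\rm Tr}(e^{-(nV)^{-\alpha}})+{\rm Tr}(E_V[1/n,\infty))\bigr)$ by splitting at the spectral cut $1/n$ and using $1-e^{-x}\leq x$ together with $V\geq1/n$ and $\alpha>1$ to get the uniform $O(1/n)$ bound on the high-energy piece; both terms are $o(\log n)$ in the $\omega$-sense by Lemma \ref{lt1} and Lemma 8.2.8 of \cite{LSZ}. This avoids the reduction to $A\geq0$, the auxiliary $\varepsilon$, and any direct appeal to dilation invariance (which enters only through the cited lemmas). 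One small point of hygiene: what you call ``a projection of rank $n_V(1/n)$'' is really $E_V[1/n,\infty)$, whose trace can exceed $n_V(1/n)={\rm Tr}(E_V(1/n,\infty))$ when $1/n$ is an eigenvalue; bounding ${\rm Tr}(E_V[1/n,\infty))\leq n_V(1/(2n))$ and applying Lemma 8.2.8 with $\varepsilon=1/2$ fixes this, though the paper itself commits the same abuse. Your quantitative observation that $0\leq T_n\leq \tfrac1n$ on the range of $E_V[1/n,\infty)$ is in fact stronger than what the paper's squeeze extracts, and makes the whole argument one-sided.
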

\begin{proof} Without loss of generality, the operator $A$ is positive. 
Fix $\varepsilon>0.$ Applying the functional calculus to the numerical inequality
$$
e^{-\varepsilon^{\alpha}}(t-1/{\varepsilon n})_+\leq te^{-(nt)^{-\alpha}}
\leq(t-1/n)_++\frac1n\chi_{[\frac1n,\infty)}(t)+\frac1ne^{-(nt)^{\alpha}},
$$
(the subscripted $+$ again denotes the positive part)
we obtain an inequality involving trace class operators
\begin{equation}\label{lt2eq3}
e^{-\varepsilon^{\alpha}}(V-1/{\varepsilon n})_+\leq Ve^{-(nV)^{-\alpha}}
\leq(V-1/n)_++\frac1nE_V[1/n,\infty)+\frac1ne^{-(nV)^{\alpha}}.
\end{equation}
For any trace class operator $T$, cyclicity of the trace gives 
${\rm Tr}(A^{1/2} T A^{1/2})= {\rm Tr} (AT)$. We apply this observation 
to the second inequality in \eqref{lt2eq3}  to infer that 
$$
{\rm Tr}(AVe^{-(nV)^{-\alpha}})\leq{\rm Tr}\left(A(V-1/n)_+\right)
+\frac{\|A\|_{\infty}}{n}n_V(1/n)+\frac{\|A\|_{\infty}}{n}{\rm Tr}(e^{-(nV)^{\alpha}}).
$$
It follows from Lemma 8.2.8 in \cite{LSZ} and Lemma \ref{lt1} that
\begin{equation}\label{lt2eq1}
\omega\Big(\Big\{\frac1{\log(n)}{\rm Tr}(AVe^{-(nV)^{-\alpha}})\Big\}_{n\geq0}\Big)
\leq\omega\Big(\Big\{\frac1{\log(n)}{\rm Tr}\left(A(V-1/n)_+\right)\Big\}_{n\geq0}\Big).
\end{equation}
Now we apply ${\rm Tr}(A^{1/2} T A^{1/2})= {\rm Tr} (AT)$  to the first  inequality in \eqref{lt2eq3} 
to insert a positive operator $A$ under the trace. So we infer that
$$
\omega\left(\Big\{\frac1{\log(n)}{\rm Tr}(AVe^{-(nV)^{-\alpha}})\Big\}_{n\geq0}\right)
\geq e^{-\varepsilon^{\alpha}}\omega\left(\Big\{\frac1{\log(n)}{\rm Tr}\left(A(V-1/n\varepsilon)_+\right)\Big\}_{n\geq0}\right).
$$
Taking into account that $\omega$ is 
dilation invariant and passing to the limit $\varepsilon\to0$, we infer that
\begin{equation}\label{lt2eq2}
\omega\Big(\Big\{\frac1{\log(n)}{\rm Tr}(AVe^{-(nV)^{-\alpha}})\Big\}_{n\geq0}\Big)
\geq\omega\Big(\Big\{\frac1{\log(n)}{\rm Tr}\left(A(V-1/n)_+\right)\Big\}_{n\geq0}\Big).
\end{equation}
The assertion follows by combining \eqref{lt2eq1} and \eqref{lt2eq2}.
\end{proof}

We can now complete the proof of Proposition \ref{measurability criterion}.

\begin{proof}[Proof of Proposition \ref{measurability criterion} \eqref{mesb}] 
For every dilation invariant extended limit $\omega$ on $l_{\infty},$ we define a heat semigroup functional
$$
\xi_{\omega}:W\to(\omega\circ M)\Big(\Big\{\frac1n{\rm Tr}(e^{-(nW)^{-1}})\Big\}_{n\geq0}\Big),
\quad 0\leq W\in\mathcal{M}_{1,\infty}.
$$
By Theorem 8.2.5 in \cite{LSZ}, the functional $\xi_{\omega}$ 
extends to a Dixmier trace on $\mathcal{M}_{1,\infty}.$ For every 
dilation invariant extended limit $\omega,$ we infer from Lemma \ref{lt2} that
$$
\omega\Big(\Big\{\frac1{\log(n)}{\rm Tr}\left(A(V-\frac1n)_+\right)\Big\}_{n\geq0}\Big)
=\omega\Big(\Big\{\frac1{\log(n)}{\rm Tr}(AVe^{-(nV)^{-\alpha}})\Big\}_{n\geq0}\Big)=z.
$$
Then, by Lemma 8.5.3 in \cite{LSZ}, we have
$
\xi_{\omega}(AV)=z
$
for every dilation invariant extended limit $\omega.$ Finally, by 
Theorem 8.3.6 in \cite{LSZ}, the set of all Dixmier traces 
coincides with the set of all functionals $\xi_{\omega},$ where 
$\omega$ runs through all dilation invariant extended limits on $l_{\infty}.$ 
The assertion follows immediately.
\end{proof}

\section{Preliminaries on noncommutative geometry and the statements of the main results}
\label{sec:prelim-NCG}
\subsection{Spectral triples and Hochschild (co)homology}

 Let $D:{\rm dom}(D)\to H$ be a self-adjoint operator with 
 ${\rm dom}(D)\subset H$ a dense linear subspace.
An operator $D$ admits a polar decomposition $D=F|D|,$ where the phase $F$ is a  
self-adjoint unitary operator defined by 
$F:=E_D([0,\infty))-E_D(-\infty, 0)$ and $|D|:{\rm dom}(D)\to H$ is a self-adjoint operator. 
The following definitions should be compared with Definition 1.20 in \cite{CGRS2}.

\begin{defi}
A spectral triple $(\mathcal{A},H,D)$ consists of a subalgebra $\mathcal{A}$  of  $\mathcal{L}(H)$ such that:
\begin{enumerate}[{\rm (a)}]
\item $a:{\rm dom}(D)\to{\rm dom}(D)$ for all $a\in\mathcal{A}$;
\item $[D,a]:{\rm dom}(D)\to H$ extends to an operator $\partial(a)\in\mathcal{L}(H)$ for all $a\in\mathcal{A}$;
\item $a(1+D^2)^{-1/2}$ is a compact operator for all $a\in \mathcal{A}$.
\end{enumerate}
\end{defi}

In what follows, if $a:{\rm dom}(D)\to{\rm dom}(D),$ then the (a priori unbounded) operator $[|D|,a]:{\rm dom}(D)\to H$ is denoted by $\delta(a).$

\begin{defi}\label{qc}
A spectral triple is $QC^{\infty}$ if
\begin{enumerate}[{\rm (a)}]
\item $a:{\rm dom}(D^n)\to{\rm dom}(D^n)$ for all $a\in\mathcal{A}$ and 
\item for all $n\geq0$ the operators
$\ \ \delta^n(a):{\rm dom}(D^n)\to H,\quad\delta^n(\partial(a)):{\rm dom}(D^{n+1})\to H\ $
extend to bounded operators for all $n\geq0$ and for all $a\in\mathcal{A}.$
\end{enumerate}
\end{defi}

\begin{defi}\label{oddeven} A spectral triple is said to be
\begin{enumerate}[{\rm (a)}]
\item even if there exists $\Gamma\in\mathcal{L}(H)$ such that $\Gamma=\Gamma^*,$ $\Gamma^2=1$ and such that $[\Gamma,a]=0$ for all $a\in\mathcal{A},$ $\{D,\Gamma\}=0.$ Here $\{\cdot,\cdot\}$ denotes anticommutator.
\item odd if no such $\Gamma$ exists. In this case, we set $\Gamma=1.$
\item $(p,\infty)-$summable if $(1+D^2)^{-p/2}\in\mathcal{L}_{1,\infty}.$
\item $\mathcal{M}_{1,\infty}^{(p)}-$summable if $(1+D^2)^{-p/2}\in\mathcal{M}_{1,\infty}.$
\end{enumerate}
\end{defi}

The following assertion is proved in many places, for example \cite[Corollary 0.5]{CPS1}, \cite{CPRS}, 
and \cite{PS}. We prove a related statement in Lemma \ref{f second est}.

\begin{prop}\label{f der def} If $(\mathcal{A},H,D)$ is a spectral triple that is 
$QC^{\infty}$ and $(p,\infty)-$summable, then $[F,a]$ and $ [F,\delta^k(a)]$ lie in 
$\mathcal{L}_{p,\infty}$ for all $a\in\mathcal{A}$ and $k\ge 1$.
\end{prop}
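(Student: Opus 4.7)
My plan is to bypass the resolvent and heat-kernel representations of $F$ used in \cite{CPS1,CPRS,PS} and instead exploit a direct operator identity coming from the polar decomposition $D=F|D|$. The identity I would prove first is
$$
|D|\,[F, b] = \partial(b) - \delta(b)\,F,
$$
valid for every bounded operator $b$ such that $\partial(b)$ and $\delta(b)$ both extend to bounded operators on $H$. Granting this, multiplying by (an invertible modification of) $|D|^{-1}$ on the left will express $[F,b]$ as an element of $\mathcal{L}_{p,\infty}\cdot \mathcal{L}(H)$, and the H\"older property \eqref{lpi mult} will then finish the argument.

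To establish the identity I would use the fact that $F$ and $|D|$ are Borel functions of $D$ and therefore commute, from which $|D|F=FD=D$ follows directly; the expansion
$$
|D|(Fb-bF)=Db-(b|D|+\delta(b))F=\partial(b)-\delta(b)F
$$
is then mechanical. To invert $|D|$, I would introduce the projection $P_0$ onto $\ker D$. Since $(1+D^2)^{-1/2}$ is compact, $P_0$ is finite-rank; the operator $|D|+P_0$ is therefore strictly positive and invertible, and $(|D|+P_0)^{-1}\in\mathcal{L}_{p,\infty}$ (it differs from $(1+|D|)^{-1}\leq(1+D^2)^{-1/2}$ by a finite-rank perturbation). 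Adding $P_0[F,b]$ to both sides of the identity and applying $(|D|+P_0)^{-1}$ on the left yields
$$
[F,b]=(|D|+P_0)^{-1}\bigl(\partial(b)-\delta(b)F+P_0[F,b]\bigr)\in\mathcal{L}_{p,\infty}.
$$

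Finally, to apply this with $b=a$ or $b=\delta^k(a)$, I would check that the required boundedness hypotheses on $\partial(b)$ and $\delta(b)$ are supplied by $QC^\infty$: $\delta(\delta^k(a))=\delta^{k+1}(a)$ is bounded by definition, while the commutation $[D,\delta^k(\cdot)]=\delta^k([D,\cdot])$, which follows by induction on $k$ from $[D,|D|]=0$, identifies $\partial(\delta^k(a))$ with $\delta^k(\partial(a))$, also bounded by $QC^\infty$. The main point I expect to require care is the validity of the key identity on $\ker D$, since a priori $|D|^{-1}$ is not defined there; a direct check shows that both sides reduce to $(D-|D|)bx=0$ for $x\in\ker D$, so the finite-rank correction through $P_0$ absorbs any boundary issues cleanly.
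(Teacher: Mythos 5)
Your proposal is correct and, notably, is closer in spirit to the paper's own Lemma~\ref{f second est} than to the references \cite{CPS1,CPRS,PS} that the paper cites for Proposition~\ref{f der def}. Those references obtain the result by representing $F$ (or the sign/phase of $D$) via a resolvent integral such as $\frac{2}{\pi}\int_0^\infty D(1+s^2+D^2)^{-1}\,ds$ and then estimating commutators term by term; your route sidesteps all of that by the single polar-decomposition identity $|D|[F,b]=\partial(b)-\delta(b)F$, which is the same mechanism the paper itself uses (in the form $[F,a]=\big([D,a]-F\delta(a)\big)|D|^{-1}$) when it proves Lemma~\ref{f second est} under the standing assumption that $D$ is invertible. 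What your argument adds is the clean treatment of $\ker D$ via $P_0$, which the paper avoids by reducing to the invertible case at the very end (Lemma~\ref{better triple}). Your verification that $\partial(\delta^k(a))=\delta^k(\partial(a))$ via the Jacobi identity and $[D,|D|]=0$, together with boundedness of $\delta^{k+1}(a)$, is exactly the right way to feed the $QC^\infty$ hypotheses into the identity.

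Two small points deserve a touch-up. First, the line ``$|D|F=FD=D$'' has a slip: $FD=F^2|D|=|D|$, not $D$; the identity you actually need and use is $|D|F=F|D|=D$. Second, the phrase ``$(|D|+P_0)^{-1}$ differs from $(1+|D|)^{-1}$ by a finite-rank perturbation'' is not literally true: the difference is $|D|^{-1}(1+|D|)^{-1}$ on $(\ker D)^\perp$, which is compact but generically of infinite rank. The conclusion $(|D|+P_0)^{-1}\in\mathcal{L}_{p,\infty}$ is nonetheless immediate, because compactness of $(1+D^2)^{-1/2}$ forces a spectral gap $|D|\geq\varepsilon>0$ on $(\ker D)^\perp$, so $(|D|+P_0)^{-1}\leq\max(1,\varepsilon^{-1}(1+\varepsilon))\,(1+|D|)^{-1}+P_0$ and the operator inequality (plus the finite-rank $P_0$) gives the required singular-value bound. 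With those cosmetic fixes, the proof is sound, and arguably more self-contained than what the cited references supply.
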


Define  multilinear mappings  $ch:\mathcal{A}^{\otimes (p+1)}\to\mathcal{L}(H)$ and 
$\Omega:\mathcal{A}^{\otimes (p+1)}\to\mathcal{A}$ by setting
$$
ch(a_0\otimes\cdots\otimes a_p)=F\Gamma\prod_{k=0}^p[F,a_k],
\quad\quad
\Omega(a_0\otimes\cdots\otimes a_p)=\Gamma a_0\prod_{k=1}^p[D,a_k].
$$

If a spectral triple $(\mathcal{A},H,D)$ is $(p,\infty)-$summable, 
then it follows from Proposition \ref{f der def} and the H\"older inequality in Equation
\eqref{lpi mult} that $ch(c)\in\mathcal{L}_{p/(p+1),\infty}\subset\mathcal{L}_1$ 
for all $c\in\mathcal{A}^{\otimes (p+1)}.$ This justifies the following definition.

\begin{defi} If $(\mathcal{A},H,D)$ is a $(p,\infty)-$summable spectral triple, 
then Connes'  Chern character $\mathcal{A}^{\otimes (p+1)}\to\mathbb{C}$ is defined by setting \quad
$$
Ch(c)=\frac{1}{2}{\rm Tr}(ch(c)),\quad c\in\mathcal{A}^{\otimes (p+1)}.
$$
\end{defi}
In fact the Chern character is the class of $Ch$ in periodic cyclic cohomology, but we
shall ignore this distinction in the sequel. 

We now turn to Hochschild (co)homology.
The algebra $\mathcal A$ is equipped with the $\delta$-topology, \cite{RenSmo}, 
determined by the seminorms $q_n:\A\to[0,\infty)$ given by
$$
q_n(a)=\sum_{k=0}^n\Vert \delta^k(a)\Vert+\Vert\delta^k([D,a])\Vert.
$$

The tensor powers of 
$\mathcal A$ are completed in the projective tensor product topology.
If $\theta:\mathcal{A}^{\otimes n}\to\mathbb{C}$ is a continuous 
multilinear functional, then the multilinear functional 
$b\theta:\mathcal{A}^{\otimes (n+1)}\to\mathbb{C}$ is defined by 
\begin{align*}
(b\theta)(a_0\otimes\cdots\otimes a_n)&=\theta(a_0a_1\otimes a_2\otimes\cdots\otimes a_n)\\
&+\sum_{k=1}^{n-1}(-1)^k\theta(a_0\otimes a_1\otimes\cdots a_{k-1}\otimes a_{k}a_{k+1}\otimes a_{k+1}\otimes\cdots\otimes a_n)\\
&+(-1)^n\theta(a_na_0\otimes a_1\otimes a_2\otimes\cdots\otimes a_{n-1}).
\end{align*}
We call $b\theta$ the Hochschild coboundary of $\theta.$ If $b\theta=0,$ then we call $\theta$ a Hochschild cocycle.
We also need the dual notion of Hochschild cycle. The Hochschild boundary $b:\mathcal{A}^{\otimes (n+1)}\to\mathcal{A}^{\otimes n}$ is defined by setting
\begin{align*}
b(a_0\otimes\cdots\otimes a_n)&=a_0a_1\otimes a_2\otimes\cdots\otimes a_n\\
&+\sum_{k=1}^{n-1}(-1)^ka_0\otimes a_1\otimes\cdots a_{k-1}\otimes a_{k}a_{k+1}\otimes a_{k+1}\otimes\cdots\otimes a_n\\
&+(-1)^n a_na_0\otimes a_1\otimes a_2\otimes\cdots\otimes a_{n-1}.
\end{align*}
If $c\in\mathcal{A}^{\otimes (n+1)}$ is such that $bc=0,$ then $c$ is called a Hochschild cycle.
For example, if $n=1,$ then $b(a_0\otimes a_1)=[a_0,a_1].$ Hence, an elementary tensor $a_0\otimes a_1$ is a Hochschild cycle if and only if $a_0$ and $a_1$ commute.
The definitions are dual in the sense that for any multilinear functional $\theta$,
$(b\theta)(a)=\theta(ba)$.
In particular, a Hochschild coboundary vanishes on every Hochschild cycle.
\subsection{The main results and the plan of the proofs}\label{plan}

For the statement of our main theorem, and the remainder of the paper,
we assume that $p\in\mathbb{N}.$

\begin{thm}\label{main result} 
Let $(\mathcal{A},H,D)$ be a $QC^{\infty}$ spectral triple which is even or odd according to
whether $p$ is even or odd, and 
let $c\in\mathcal{A}^{\otimes (p+1)}$ be a Hochschild cycle.
\begin{enumerate}[{\rm (a)}]
\item\label{maina} If the spectral triple is $(p,\infty)-$summable, then
for every normalised trace $\varphi$ on $\mathcal{L}_{1,\infty}$
\begin{equation}
\label{main equation}
\varphi(\Omega(c)(1+D^2)^{-p/2})=Ch(c).
\end{equation}
\item\label{mainb} If the spectral triple is $\mathcal{M}_{1,\infty}^{(p)}-$summable, then
${\rm Tr}_{\omega}(\Omega(c)(1+D^2)^{-p/2})=Ch(c)$
for every Dixmier trace on $\mathcal{M}_{1,\infty}.$
\end{enumerate}
\end{thm}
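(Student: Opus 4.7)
The plan is to reduce both parts of the theorem to a single heat-kernel asymptotic via Proposition \ref{measurability criterion}. Set $V=(1+D^2)^{-p/2}$ and $A=\Omega(c)$; then $A\in\mathcal{L}(H)$ by $QC^{\infty}$, $V\in\mathcal{L}_{1,\infty}$ under hypothesis \eqref{maina}, and $V\in\mathcal{M}_{1,\infty}$ under hypothesis \eqref{mainb}. For any fixed $\alpha>1$, Proposition \ref{measurability criterion} reduces both statements to establishing the single asymptotic
\begin{equation}\label{goalasymp}
{\rm Tr}\bigl(\Omega(c)\,V\,e^{-(nV)^{-\alpha}}\bigr)=Ch(c)\,\log n+O(1),\qquad n\to\infty.
\end{equation}

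To prove \eqref{goalasymp}, I would expand $\Omega(c)=\Gamma a_0\prod_{k=1}^{p}[D,a_k]$ via the polar identity $[D,a]=[F,a]|D|+F\delta(a)$ into a sum of $2^{p}$ summands, indexed by the subset $S\subseteq\{1,\dots,p\}$ of positions at which the $[F,a_k]|D|$ term is taken. By Proposition \ref{f der def} each $[F,a_k]$ lies in $\mathcal{L}_{p,\infty}$, each $\delta(a_k)$ is bounded by $QC^{\infty}$, and the H\"older inequality \eqref{lpi mult} then pins down the ideal membership of every resulting product. The central algebraic claim I would need to prove is that for every proper subset $S\subsetneq\{1,\dots,p\}$ the corresponding summand, multiplied by $V$, either lies in a strictly smaller ideal than $\mathcal{L}_{1,\infty}$ (yielding an $O(1)$ contribution to \eqref{goalasymp} by an estimate in the spirit of Lemma \ref{estimate}) or, modulo such small-ideal corrections, is a commutator in $[\mathcal{L}_{1,\infty},\mathcal{L}(H)]$ once the Hochschild cycle condition $bc=0$ is invoked. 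The commutator identities needed are produced by repeated use of the Leibniz rule for $\delta$ together with explicit Hochschild coboundary computations for $p$-cochains constructed from $c$, $F$ and $\delta$; these identities are the content of the appendix.

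What remains modulo $O(1)$ is the \emph{fully singular} contribution $S=\{1,\dots,p\}$. Commuting every $|D|$-factor to the right past the $[F,a_k]$'s (at the cost of $\delta$-corrections in strictly smaller ideals) and using that $|D|^{p}V$ differs from the identity only by an operator in a smaller ideal, this contribution collapses to $\Gamma a_0\,[F,a_1]\cdots[F,a_p]\cdot|D|^{p}V$. A final application of the cycle relation, used to supply the missing $[F,a_0]$ commutator, converts this (under the trace in \eqref{goalasymp}) into $\tfrac{1}{2}\,{\rm Tr}(ch(c)\,W_n)$ for a regulariser $W_n$ whose partial eigenvalue sums produce the required $\log n$, so that \eqref{goalasymp} follows from $Ch(c)=\tfrac{1}{2}{\rm Tr}(ch(c))$ together with a standard Tauberian computation. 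The main obstacle throughout is the algebraic bookkeeping in the middle step: every discarded summand must be shown to be genuinely $O(1)$ rather than $O(\log n)$, and in the Macaev case \eqref{mainb}, where $\mathcal{M}_{1,\infty}^{(p)}\supsetneq\mathcal{L}_{p,\infty}$ and the H\"older bounds alone are too blunt, one must combine these commutator identities with the sharper heat-kernel estimates of Lemmas \ref{lt1} and \ref{lt2}.
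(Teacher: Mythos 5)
Your overall architecture (reduce both parts to the single heat-kernel asymptotic \eqref{meas eq crit} via Proposition \ref{measurability criterion}) is sound, and your decomposition of $[D,a]=[F,a]|D|+F\delta(a)$ is the same starting point as the paper's Lemma \ref{kogom6}. However, you have misidentified where the logarithmic divergence comes from, and this is not a cosmetic slip.

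The ``fully singular'' term $S=\{1,\dots,p\}$ collapses, as you say, to $\Gamma a_0[F,a_1]\cdots[F,a_p]\cdot|D|^pV$, which up to trace-class corrections is $\mathcal{W}_{\varnothing}(c)=\Gamma a_0\prod_{k=1}^p[F,a_k]$. But by the identity $2\mathcal{W}_{\varnothing}(c)=[F,F\mathcal{W}_{\varnothing}(c)]+(-1)^{p-1}ch(c)$ and the fact that $ch(c)\in\mathcal{L}_{p/(p+1),\infty}\subset\mathcal{L}_1$, this operator lies in the commutator subspace $[\mathcal{L}_{1,\infty},\mathcal{L}(H)]$ (Lemma \ref{kogom5}). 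By Proposition \ref{komm description} its eigenvalue partial sums are $O(1)$, and correspondingly its contribution to the heat-kernel trace is $O(1)$, \emph{not} $\log n$. The same goes for your proposed closing move: since $ch(c)$ is trace class, $\tfrac12{\rm Tr}(ch(c)W_n)$ is uniformly bounded for any bounded regulariser $W_n$; there is no mechanism by which its ``partial eigenvalue sums'' could produce $\log n$. The quantity ${\rm Tr}(ch(c)f(s|D|))$ tends to the finite number $Ch(c)$ as $s\to0$ — the paper uses exactly this as the \emph{constant} ingredient, see \eqref{chs2}.

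The genuine logarithmic divergence is carried by the terms with $|S|=p-1$, i.e.\ exactly one $F\delta(a_m)$ factor: these are precisely the $\mathcal{W}_m(c)D^{-1}$ with $|\mathscr{A}|=1$ in the paper's notation. You lump all proper subsets $S\subsetneq\{1,\dots,p\}$ into the $O(1)$ bucket, but the Hochschild coboundary arguments (Lemmas \ref{kogom2}--\ref{kogom5} and the appendix) kill only the cases $|\mathscr{A}|\geq 2$ and $\mathscr{A}=\varnothing$. The case $|\mathscr{A}|=1$ survives, and extracting $\log(1/s)$ from ${\rm Tr}(\mathcal{W}_p(c)D^{-1}e^{-(s|D|)^{p+1}})$ is the hard analytic content of the argument (Lemma \ref{first cycle lemma} and Proposition \ref{second cycle lemma}): it requires the commutator estimates of Subsection \ref{commutator section}, the derivative identity $s{\rm Tr}(\mathcal{W}_p(c)Ff'(s|D|))=-Ch(c)+O(s)$, and an integration of a $1/u$ singularity; it is not a ``standard Tauberian computation'' applied to $ch(c)$. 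In short, your plan discards the term that actually carries the answer and keeps the one that vanishes. Finally, your proposal does not address the parity-mismatch cases (odd triple with even $p$ and vice versa), where both sides of \eqref{main equation} must be shown to vanish separately, nor the removal of the invertibility hypothesis on $D$ via Lemma \ref{better triple}; both are handled explicitly in the paper and cannot be skipped.
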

Let us illustrate the assertion for $p=1.$ If elements $a_0,a_1\in\mathcal{A}$ commute, then the elementary tensor $a_0\otimes a_1$ is a Hochschild $1$-cycle and
$$
\varphi(a_0[D,a_1](1+D^2)^{-1/2})=\frac12{\rm Tr}(F[F,a_0][F,a_1])
$$
for every trace $\varphi$ on $\mathcal{L}_{1,\infty}.$
The corollary below follows from Theorem \ref{main result} and Proposition \ref{komm description}.

\begin{cor}\label{main cor} 
Suppose that the assumptions of Theorem \ref{main result} \eqref{maina} hold. 
If $c\in\mathcal{A}^{\otimes (p+1)}$ is a Hochschild cycle, then:
\begin{enumerate}[{\rm (a)}]
\item $\Omega(c)(1+D^2)^{-p/2}\in[\mathcal{L}_{1,\infty},\mathcal{L}(H)]$ 
if and only if $Ch(c)=0$, and more generally
$$
\Omega(c)(1+D^2)^{-p/2}\in Ch(c)\cdot{\rm diag}\Big(\Big\{\frac1{k+1}\Big\}_{k\geq0}\Big)
+[\mathcal{L}_{1,\infty},\mathcal{L}(H)];
$$
\item there is an equality\quad
$
\sum_{m=0}^n\lambda(m,\Omega(c)(1+D^2)^{-p/2})=Ch(c)\log(n)+O(1),\quad n\geq0.
$
\end{enumerate}
\end{cor}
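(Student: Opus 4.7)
The corollary is essentially a repackaging of Theorem \ref{main result}\eqref{maina} through the abstract measurability machinery of Section \ref{sec:prelim-traces}, so the plan is short and structural rather than computational. First I would record that the operator $T:=\Omega(c)(1+D^2)^{-p/2}$ belongs to $\mathcal{L}_{1,\infty}$: the operator $\Omega(c)$ is a finite sum of products of bounded operators $\Gamma a_0[D,a_1]\cdots[D,a_p]$ (boundedness of each $[D,a_k]=\partial(a_k)$ is part of the spectral triple axioms), while $(1+D^2)^{-p/2}\in\mathcal{L}_{1,\infty}$ by the $(p,\infty)$-summability hypothesis. Hence $T\in\mathcal{L}_{1,\infty}$, so statements about normalised traces, commutator subspaces, and eigenvalue asymptotics are all meaningful.

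For part (a), Theorem \ref{main result}\eqref{maina} gives $\varphi(T)=Ch(c)$ for every normalised trace $\varphi$ on $\mathcal{L}_{1,\infty}$. Applying the lemma preceding Proposition \ref{komm description} with $z=Ch(c)$ immediately yields the second displayed inclusion
\[
T-Ch(c)\cdot\mathrm{diag}\bigl(\{1/(k+1)\}_{k\geq0}\bigr)\in[\mathcal{L}_{1,\infty},\mathcal{L}(H)].
\]
The biconditional in the first sentence of (a) is then extracted from this: the direction $Ch(c)=0\Rightarrow T\in[\mathcal{L}_{1,\infty},\mathcal{L}(H)]$ is the special case $z=0$ of the displayed inclusion, and the reverse direction follows because any trace vanishes on the commutator subspace, so a normalised trace applied to $T$ would return $0$, forcing $Ch(c)=0$ by Theorem \ref{main result}\eqref{maina}.

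For part (b), I invoke Proposition \ref{komm description}: since all normalised traces assign the common value $Ch(c)$ to $T$, the operator $T$ is universally measurable, and the proposition states exactly that
\[
\sum_{m=0}^n\lambda(m,T)=Ch(c)\log(n+1)+O(1),
\]
which is equivalent to the formulation in the corollary after absorbing $\log(n+1)-\log n=O(1/n)$ into the remainder.

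There is no genuine obstacle here: all the heavy lifting has already been done in Theorem \ref{main result} and in the commutator-subspace description of Proposition \ref{komm description}. The only point requiring any care is to confirm at the outset that $T$ lies in $\mathcal{L}_{1,\infty}$ so that the measurability framework is applicable, and to record the trivial equivalence between $\log(n+1)$ and $\log n$ up to bounded error when moving between the two normalisations of the eigenvalue sum.
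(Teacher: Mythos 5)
Your proposal is correct and follows exactly the route the paper indicates: the paper simply remarks that the corollary follows from Theorem \ref{main result} and Proposition \ref{komm description}, and you supply precisely those details --- the preliminary check that $\Omega(c)(1+D^2)^{-p/2}\in\mathcal{L}_{1,\infty}$, the appeal to the lemma characterising universal measurability to obtain the commutator-subspace inclusion, and Proposition \ref{komm description} for the partial-sum asymptotics (with the harmless replacement of $\log(n+1)$ by $\log n$).
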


%

Theorem \ref{main result} is initially proved under the assumption of
invertibility of $D$ in subsection \ref{prmain}, after proving some intermediate
steps. The first step is to replace $\Omega(c)|D|^{-p}$ by a new operator. More specifically,
for $1\leq m\leq p,$ we define the multilinear mappings 
$\mathcal{W}_m:\mathcal{A}^{\otimes(p+1)}\to\mathcal{L}(H)$ by setting
\begin{equation}
\mathcal{W}_m(a_0\otimes\cdots\otimes a_p)=\Gamma a_0\Big(\prod_{k=1}^{m-1}[F,a_k]\Big)\delta(a_m)\Big(\prod_{k=m+1}^p[F,a_k]\Big).
\label{eq:W-emm}
\end{equation}
By Proposition \ref{f der def} and by the H\"older property in Equation \eqref{lpi mult}, we have 
$\mathcal{W}_m(c)D^{-1}\in\mathcal{L}_{1,\infty}$ 
(respectively, $\mathcal{W}_m(c)D^{-1}\in\mathcal{M}_{1,\infty}$).
Then, by exploiting Hochschild cohomology (see Appendix A),  we show
in subsection \ref{ssec} that (for $D^{-1}\in\mathcal{L}_{1,\infty}$)
$$
\Omega(c)|D|^{-p}-p\mathcal{W}_p(c)D^{-1}\in[\mathcal{L}_{1,\infty},\mathcal{L}(H)].
$$
We prove the analogous result for $D^{-1}\in\mathcal{M}_{1,\infty}$ also.
Then, in subsection \ref{commutator section}, 
we obtain a number of commutator estimates which allow us to
prove, in subsection \ref{heat}, that
for every Hochschild cycle $c\in\mathcal{A}^{\otimes (p+1)}$,
$$
{\rm Tr}(\mathcal{W}_p(c)D^{-1}e^{-(s|D|)^{p+1}})
=Ch(c)\log\big(1/s\big)+O(1),\quad s\to0.
$$
By invoking our abstract measurability criterion, Proposition \ref{measurability criterion}, we can
then assemble the pieces to prove the main result in subsection \ref{prmain}. We also
show at this point how to remove the invertibility assumption.

\section{Proofs}
\label{sec:proofs}

Until subsection \ref{prmain}, we will suppose that the operator $D$ of a 
spectral triple $(\mathcal{A},H,D)$ is invertible.

\subsection{Exploiting  Hochschild cohomology}\label{ssec}
Our aim in this subsection is to prove the following result, by refining the 
approach of \cite[Section 3.5]{CPRS}.

\begin{prop}\label{reduction} Let $(\mathcal{A},H,D)$ be an  odd (respectively, even) 
$QC^{\infty}$ spectral triple and let $p$ be odd (respectively, even). For every Hochschild cycle $c\in\mathcal{A}^{\otimes (p+1)}$ we have:
\begin{enumerate}[{\rm (a)}]
\item\label{reda} if $D^{-p}\in\mathcal{L}_{1,\infty},$ then
$
\Omega(c)|D|^{-p}-p\mathcal{W}_p(c)D^{-1}\in[\mathcal{L}_{1,\infty},\mathcal{L}(H)],
$
\item\label{redb} if $D^{-p}\in\mathcal{M}_{1,\infty},$ then
$
\Omega(c)|D|^{-p}-p\mathcal{W}_p(c)D^{-1}\in[\mathcal{M}_{1,\infty},\mathcal{L}(H)].
$
\end{enumerate}
\end{prop}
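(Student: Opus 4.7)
The plan is to reduce $\Omega(c)|D|^{-p}$ to $p\,\mathcal{W}_p(c)D^{-1}$ modulo $[\mathcal{L}_{1,\infty},\mathcal{L}(H)]$ via a two-step argument: an algebraic expansion using the polar-decomposition identity $[D,a]=F\delta(a)+[F,a]|D|$, followed by a Hochschild coboundary argument that exploits the cycle condition $bc=0$.

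For the algebraic step, I would apply the above identity iteratively to the factors of $\Omega(c)=\Gamma a_0\prod_{j=1}^p[D,a_j]$, expanding $\Omega(c)|D|^{-p}$ as a sum of terms with mixed $\delta$-, $[F,a_j]$-, and $|D|$-content. The key observation is that every commutator of $|D|$ with a bounded operator produces one extra factor in $\mathcal{L}_{p,\infty}$ (via Proposition \ref{f der def}), so by the H\"older inequality \eqref{lpi mult} the correction terms generated by any rearrangement of $|D|$'s land in $\mathcal{L}_{q,\infty}$ for some $q<1$. These are trace class and hence in $[\mathcal{L}_{1,\infty},\mathcal{L}(H)]$ by Proposition \ref{komm description}. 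After discarding them, the surviving pieces organise into a sum of $p$ ``principal'' expressions---one for each position $k\in\{1,\ldots,p\}$ of a single $\delta$---plus an all-$[F,\cdot]$ residual $\Gamma a_0\prod_{j=1}^p[F,a_j]$.

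For the Hochschild step, I would construct operator-valued cochains $\sigma_k:\mathcal{A}^{\otimes p}\to\mathcal{L}_{1,\infty}$ whose Hochschild coboundaries realise, modulo $[\mathcal{L}_{1,\infty},\mathcal{L}(H)]$, first the equivalence of each principal term with $\mathcal{W}_k(c)D^{-1}$, and then the telescope $\mathcal{W}_{k+1}(c)D^{-1}-\mathcal{W}_k(c)D^{-1}$. An analogous cochain identifies the all-$[F,\cdot]$ residual as a Hochschild coboundary. Computing $b\sigma_k$ using the Leibniz rules for $\delta$ and $[F,\cdot]$ produces the required identities; explicit formulas are collected in Appendix A. Because $c$ is a cycle, $(b\sigma_k)(c)=\sigma_k(bc)=0$ for each $\sigma_k$, so the $p$ principal terms are mutually congruent to $\mathcal{W}_p(c)D^{-1}$ and the residual vanishes, yielding the desired $p\,\mathcal{W}_p(c)D^{-1}$.

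Part (b) is proved by the identical two-step argument: the H\"older estimates still place every correction in $\mathcal{L}_{q,\infty}\subset\mathcal{L}_1\subseteq[\mathcal{M}_{1,\infty},\mathcal{L}(H)]$ (since every Dixmier trace annihilates trace-class operators), and the Hochschild coboundary step does not depend on the ambient ideal. The main obstacle I anticipate is the detailed commutator bookkeeping during the polar-decomposition expansion---verifying that every rearrangement of $|D|$ past a bounded operator produces a correction of strictly smaller Schatten rank---together with the explicit identification in Appendix A of cochains whose coboundaries produce the required telescopes and absorb the residual.
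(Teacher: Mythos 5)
Your algebraic expansion has a genuine gap. When you expand $\Omega(c)|D|^{-p}$ using $[D,a]=F\delta(a)+[F,a]|D|$ and push the $|D|$'s to the right, the correction terms coming from commutators with $|D|^{-1}$ are indeed trace class (your bookkeeping there is correct), but the \emph{principal} terms surviving the expansion are indexed by \emph{all} subsets $\mathscr{A}\subset\{1,\dots,p\}$, not merely $|\mathscr{A}|\le 1$. The term with $m=|\mathscr{A}|$ occurrences of $\delta$ takes the form $\mathcal{W}_{\mathscr{A}}(c)D^{-m}$, where $p-m$ of the factors lie in $\mathcal{L}_{p,\infty}$ and $D^{-m}\in\mathcal{L}_{p/m,\infty}$. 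By the H\"older property \eqref{lpi mult}, $\tfrac{p-m}{p}+\tfrac{m}{p}=1$, so $\mathcal{W}_{\mathscr{A}}(c)D^{-m}\in\mathcal{L}_{1,\infty}$ for \emph{every} $m$, including $m\ge 2$. These are not trace class; they sit at exactly the same summability as the $|\mathscr{A}|=1$ terms you keep, and they do not disappear into $[\mathcal{L}_{1,\infty},\mathcal{L}(H)]$ for free. Dismissing them as $\mathcal{L}_q$, $q<1$, is where the argument breaks down. The paper's proof must invest most of its effort precisely here: Lemma \ref{kogom2} shows that if $\mathscr{A}$ contains two consecutive indices $m-1,m$ then $\varphi(\mathcal{W}_{\mathscr{A}}(c)D^{-|\mathscr{A}|})$ is the Hochschild coboundary (with a $\delta^2$ inserted) of a scalar cochain and so vanishes on cycles, Lemma \ref{kogom3} produces telescoping relations $\mathcal{W}_{\mathscr{A}_1}(c)D^{-|\mathscr{A}_1|}+\mathcal{W}_{\mathscr{A}_2}(c)D^{-|\mathscr{A}_2|}\in[\mathcal{L}_{1,\infty},\mathcal{L}(H)]$ via a $[F,\delta(\cdot)]$ insertion, and Corollary \ref{kogom4} chains these to eliminate every $|\mathscr{A}|\ge 2$ term. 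Without this layer, the surviving part of your expansion is not the claimed sum of single-$\delta$ terms plus the $[F,\cdot]$ residual.

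A secondary, more technical point: you propose operator-valued cochains $\sigma_k:\mathcal{A}^{\otimes p}\to\mathcal{L}_{1,\infty}$ and invoke the duality $(b\sigma_k)(c)=\sigma_k(bc)$. That duality is automatic only for scalar-valued multilinear functionals (as the paper notes: ``for any multilinear functional $\theta$, $(b\theta)(a)=\theta(ba)$''). For operator-valued cochains the Hochschild differential carries the twisted last term $(-1)^n\theta(a_0,\dots,a_{n-1})a_n$ rather than $(-1)^n\theta(a_na_0,\dots,a_{n-1})$, and the discrepancy is a commutator $[a_n,\theta(\cdot)]$. One can repair this by observing that the discrepancy lies in $[\mathcal{L}_{1,\infty},\mathcal{L}(H)]$, but the cleaner route---and the one the paper takes---is to precompose with an arbitrary trace $\varphi$ and work with the scalar cochains $c\mapsto\varphi(\mathcal{W}_{\mathscr{A}}(c)D^{-|\mathscr{A}|})$ directly, concluding via the characterisation of $[\mathcal{L}_{1,\infty},\mathcal{L}(H)]$ as the common kernel of all traces. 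Your proposal would need to make one of these repairs explicit.
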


We consider auxiliary multilinear mappings which generalise 
the mappings $\mathcal{W}_m,$ $1\leq m\leq p$, introduced above in Equation \eqref{eq:W-emm}. 
For $\mathscr{A}\subset \{1,\dots,p\}$ define the multilinear mapping 
$\mathcal{W}_{\mathscr{A}}:\mathcal{A}^{\otimes (p+1)}\to\mathcal{L}(H)$ by setting
$$
\mathcal{W}_{\mathscr{A}}(a_0\otimes\cdots\otimes a_p)
:=\Gamma a_0\prod_{k=1}^p[b_k,a_k],
\quad a_0\otimes\cdots\otimes a_p\in\mathcal{A}^{\otimes (p+1)},
$$
where $b_k=|D|,$ for $k\in\mathscr{A},$ and $b_k=F,$ for $k\notin\mathscr{A}.$ 
Evidently, if $\mathscr{A}=\{m\},$ then $\mathcal{W}_{\mathscr{A}}=\mathcal{W}_m.$ 
It follows from Proposition \ref{f der def} and the H\"older property in Equation \eqref{lpi mult} that
$$
\mathcal{W}_{\mathscr{A}}(a)D^{-|\mathscr{A}|}\in\mathcal{L}_{1,\infty},
\quad \mathscr{A}\subset \{1,\dots,p\}.
$$

For every $\mathscr{A}\subset \{1,\dots,p\},$ define the number
$$
n_{\mathscr{A}}=|\{(i,j):\ i<j,\ i\in\mathscr{A},\ j\notin\mathscr{A}\}|.
$$
The following assertion explains the introduction of the mappings 
$\mathcal{W}_{\mathscr{A}},$ $\mathscr{A}\subset \{1,\dots,p\}$ 
that are used for the proof of Proposition \ref{reduction}.  
We denote the cardinality of $\mathscr{A}$ by $|\mathscr{A}|$.
\begin{lem}\label{kogom6} If $(\mathcal{A},H,D)$ is $QC^{\infty}$ 
spectral triple with $D^{-p}\in\mathcal{L}_{1,\infty},$ then for all $c\in\mathcal{A}^{\otimes (p+1)}$
$$
\Omega(c)|D|^{-p}-\sum_{\mathscr{A}\subset \{1,\dots,p\}}(-1)^{n_{\mathscr{A}}}\mathcal{W}_{\mathscr{A}}(c)D^{-|\mathscr{A}|}\in\mathcal{L}_1.
$$
\end{lem}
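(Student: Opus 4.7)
The plan is to use the polar decomposition $D = F|D|$ (with $F$ and $|D|$ commuting) to expand each factor $[D, a_k]$, then rearrange the resulting $2^p$ summands into the target form modulo $\mathcal{L}_1$. The opening identity is
\[
[D, a_k] = F[|D|, a_k] + [F, a_k]|D|,
\]
and distributing over $\prod_{k=1}^p [D, a_k]$ yields $\Omega(c) = \sum_{\mathscr{A}} \Gamma a_0 T_{\mathscr{A}}$, with $T_{\mathscr{A}} = \prod_{k=1}^p X_k^{\mathscr{A}}$, where $X_k^{\mathscr{A}} = F[|D|, a_k]$ if $k \in \mathscr{A}$ and $X_k^{\mathscr{A}} = [F, a_k]|D|$ otherwise.

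For each subset $\mathscr{A}$, the plan is to transport every ``loose'' $F$ (from factors with $k \in \mathscr{A}$) and every ``loose'' $|D|$ (from factors with $k \notin \mathscr{A}$) to the far right of $T_{\mathscr{A}}$. This rests on the elementary moves
\[
F|D| = |D|F, \qquad F[F, a_j] = -[F, a_j]F, \qquad F^2 = 1,
\]
\[
F[|D|, a_k] = [|D|, a_k]F + [F, \delta(a_k)], \qquad |D|[F, a_j] = [F, a_j]|D| + [F, \delta(a_j)],
\]
\[
|D|[|D|, a_k] = [|D|, a_k]|D| + \delta^2(a_k).
\]
The sole source of signs is the anti-commutation $F[F, a_j] = -[F, a_j]F$: each loose $F$ arising at a position $i \in \mathscr{A}$ must traverse $|\{j > i : j \notin \mathscr{A}\}|$ factors of the form $[F, a_j]$ before either annihilating another loose $F$ via $F^2 = 1$ or reaching the right end. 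Summing over $i \in \mathscr{A}$ produces a total sign of $(-1)^{n_{\mathscr{A}}}$. The resulting leading term is $(-1)^{n_{\mathscr{A}}} \Gamma a_0 \prod_{k=1}^p [b_k, a_k] \cdot |D|^{p - |\mathscr{A}|} F^{|\mathscr{A}|}$, and multiplying on the right by $|D|^{-p}$ together with the relation $D^{-|\mathscr{A}|} = |D|^{-|\mathscr{A}|} F^{|\mathscr{A}|}$ (which uses only $D^{-1} = |D|^{-1}F$ and the commutation of $F$ with $|D|$) converts this leading term into precisely $(-1)^{n_{\mathscr{A}}} \mathcal{W}_{\mathscr{A}}(c) D^{-|\mathscr{A}|}$.

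It remains to show that every commutator remainder produced along the way lies in $\mathcal{L}_1$. Each such remainder replaces one of the pairs $F \cdot [|D|, a_k]$, $|D| \cdot [F, a_j]$, or $|D| \cdot [|D|, a_k]$ by, respectively, $[F, \delta(a_k)]$, $[F, \delta(a_j)]$, or $\delta^2(a_k)$; in the last case one floating $|D|$ is additionally lost from the product. By Proposition \ref{f der def} the first two remainder factors lie in $\mathcal{L}_{p, \infty}$, while in the third case the missing $|D|$ becomes an extra $|D|^{-1} \in \mathcal{L}_{p, \infty}$ after multiplying by $|D|^{-p}$. In every case the remainder carries one additional $\mathcal{L}_{p, \infty}$ factor compared to the leading $\mathcal{L}_{1, \infty}$ term, so the H\"older property \eqref{lpi mult} places it in $\mathcal{L}_{p/(p+1), \infty} \subset \mathcal{L}_1$. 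Since only finitely many moves and finitely many subsets arise, the total error lies in $\mathcal{L}_1$.

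The main obstacle is the combinatorial bookkeeping: one must verify that some concrete ordering of the elementary moves (for example, transport all loose $|D|$'s to the right first, then all loose $F$'s) collects exactly the sign $(-1)^{n_{\mathscr{A}}}$ and that every intermediate remainder meets the $\mathcal{L}_1$ estimate above. An induction on $|\mathscr{A}|$ (or on $p$) with a fixed ordering reduces the entire verification to the elementary moves listed.
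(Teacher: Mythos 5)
Your proposal is correct and relies on the same ingredients as the paper's proof: the polar expansion $[D,a_k]=F\delta(a_k)+[F,a_k]|D|$, transporting the loose $F$'s (collecting the sign $(-1)^{n_{\mathscr{A}}}$ from $F[F,a_j]=-[F,a_j]F$) and loose $|D|$'s to the right, and the H\"older estimate \eqref{lpi mult} to place each commutator remainder in $\mathcal{L}_{p/(p+1),\infty}\subset\mathcal{L}_1$. The only organizational difference is that the paper packages the combinatorial bookkeeping as an induction on the number of factors $q$ (proving the identity modulo $\mathcal{L}_{p/(q+1),\infty}$ at each stage), whereas you expand all $2^p$ terms at once and argue directly; the underlying elementary moves, the sign count, and the summability accounting are the same.
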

\begin{proof} 
We will proceed by proving  that for $1\leq q\leq p$
and $c=a_0\otimes a_1\otimes\cdots\otimes a_q$,
\begin{equation}
\Gamma a_0[D,a_1]\cdots[D,a_q]|D|^{-q}
=\sum_{\mathscr{A}\subset \{1,\dots,q\}}(-1)^{n_{\mathscr{A}}}
\mathcal{W}_{\mathscr{A}}(c)D^{-|\mathscr{A}|}\bmod\mathcal{L}_{p/(q+1),\infty}.
\label{eq:q<p}
\end{equation}

For $q=1,$ we consider $c=a_0\otimes a_1\in\mathcal{A}^{\otimes 2}.$ We have
\begin{align*}
[D,a_1]=[F|D|,a_1]&=F\delta(a_1)+[F,a_1]|D|=[F,\delta(a_1)]+\delta(a_1)F+[F,a_1]|D|\\
&=([F,\delta(a_1)]|D|^{-1}+\delta(a_1)D^{-1}+[F,a_1])|D|.
\end{align*}
By Proposition \ref{f der def} and the assumption, the operator $[F,\delta(a_1)]|D|^{-1}$ is in $\mathcal{L}_{p,\infty}\cdot \mathcal{L}_{p,\infty}
\subset \mathcal{L}_{p/2,\infty}$, while the other terms in parentheses are in 
$\mathcal{L}_{p,\infty}$, and give the right hand side of Equation \eqref{eq:q<p}. Thus we have
proved the case $q=1$.

Suppose then that we have proved the claim for some $q<p$. Since commutators with $|D|^{-1}$ improve summability, it follows that
$$\Big(\prod_{k=2}^{q+1}[D,a_k]\Big)|D|^{-1}=|D|^{-1}\Big(\prod_{k=2}^{q+1}[D,a_k]\Big)\bmod \mathcal{L}_{p/2,\infty}.$$
Therefore,
$$\Gamma a_0\Big(\prod_{k=1}^{q+1}[D,a_k]\Big)|D|^{-q-1}=\Gamma a_0[D,a_1]\Big(\Big(\prod_{k=2}^{q+1}[D,a_k]\Big)|D|^{-1}\Big)|D|^{-q}=$$
$$=\Gamma a_0[D,a_1]\Big(|D|^{-1}\Big(\prod_{k=2}^{q+1}[D,a_k]\bmod\mathcal{L}_{p/2,\infty}\Big)\Big)|D|^{-q}=$$
$$=\Gamma a_0[D,a_1]|D|^{-1}\Big(\Big(\prod_{k=2}^{q+1}[D,a_k]\Big)|D|^{-q}\Big)\bmod\mathcal{L}_{p/(q+2),\infty}.$$
By induction, we have
$$\Big(\prod_{k=2}^{q+1}[D,a_k]\Big)|D|^{-q}=\sum_{\mathscr{A}\subset\{2,\dots,q+1\}}\Gamma \mathcal{W}_{\mathscr{A}}(1,a_2,\dots,a_{q+1})
(-1)^{n_{\mathscr{A}}}D^{-|\mathscr{A}|}\bmod\mathcal{L}_{p/(q+1),\infty}.$$
Thus,
$$\Gamma a_0\Big(\prod_{k=1}^{q+1}[D,a_k]\Big)|D|^{-q-1}=$$
$$=\Gamma a_0[D,a_1]|D|^{-1}\Big(\sum_{\mathscr{A}\subset\{2,\dots,q+1\}}\Gamma \mathcal{W}_{\mathscr{A}}(1,a_2,\dots,a_{q+1})
(-1)^{n_{\mathscr{A}}}D^{-|\mathscr{A}|}\Big)\bmod\mathcal{L}_{p/(q+2),\infty}=$$
$$=\Gamma a_0\Big(\delta(a_1)F+[F,a_1]|D|\Big)|D|^{-1}\Big(\sum_{\mathscr{A}\subset\{2,\dots,q+1\}}\Gamma \mathcal{W}_{\mathscr{A}}(1,a_2,\dots,a_{q+1})
(-1)^{n_{\mathscr{A}}}D^{-|\mathscr{A}|}\Big)\bmod\mathcal{L}_{p/(q+2),\infty}$$
Since commutators with $|D|^{-1}$ improve summability, it follows that
$$|D|^{-1}\Gamma \mathcal{W}_{\mathscr{A}}(1,a_2,\dots,a_{q+1})=\Gamma \mathcal{W}_{\mathscr{A}}(1,a_2,\dots,a_{q+1})|D|^{-1}\bmod\mathcal{L}_{p/(q+2-|\mathscr{A}|),\infty}.$$
Since $[F,\delta(a)]\in\mathcal{L}_{p,\infty}$ for all $a\in\mathcal{A},$ it follows that
$$F\Gamma \mathcal{W}_{\mathscr{A}}(1,a_2,\dots,a_{q+1})=(-1)^{q-|\mathscr{A}|}\Gamma \mathcal{W}_{\mathscr{A}}(1,a_2,\dots,a_{q+1})F\bmod\mathcal{L}_{p/(q+1-|\mathscr{A}|),\infty}.$$
Indeed, we have $F[F,a]=-[F,a]F$ for every $a\in\mathcal{A}$ and there are exactly $q-|\mathscr{A}|$ commutators $[F,a_j]$ in $\mathcal{W}_{\mathscr{A}}.$

Therefore,
$$\Gamma a_0\Big(\prod_{k=1}^{q+1}[D,a_k]\Big)|D|^{-q-1}=$$
$$=\sum_{\mathscr{A}\subset\{2,\dots,q+1\}}(-1)^{n_{\mathscr{A}}}(-1)^{q-|\mathscr{A}|}\Gamma a_0\delta(a_1)\Gamma \mathcal{W}_{\mathscr{A}}(1,a_2,\dots,a_{q+1})D^{-|\mathscr{A}|-1}+$$
$$+\sum_{\mathscr{A}\subset\{2,\dots,q+1\}}(-1)^{n_{\mathscr{A}}}\Gamma a_0[F,a_1]\Gamma \mathcal{W}_{\mathscr{A}}(1,a_2,\dots,a_{q+1})D^{-|\mathscr{A}|}\bmod\mathcal{L}_{p/(q+2),\infty}.$$

For each $\mathscr{A}\subset\{2,\dots,q+1\}$ define $\tilde{\mathscr{A}}=\mathscr{A}\cup\{1\}\subset\{1,\dots,q+1\}$
and $\hat{\mathscr{A}}=\mathscr{A}\subset\{1,\dots,q+1\}.$ Then $n_{\tilde{\mathscr{A}}}=q-|\mathscr{A}|+n_{\mathscr{A}}$ while 
$n_{\hat{\mathscr{A}}}=n_{\mathscr{A}}.$

By definition, we have
$$\Gamma a_0\delta(a_1)\Gamma\mathcal{W}_{\mathscr{A}}(1,a_2,\dots,a_{q+1})=\mathcal{W}_{\tilde{\mathscr{A}}}(c)$$
and
$$\Gamma a_0[F,a_1]\Gamma \mathcal{W}_{\mathscr{A}}(1,a_2,\dots,a_{q+1})=\mathcal{W}_{\hat{\mathscr{A}}}(c).$$
Hence,
$$\Gamma a_0\Big(\prod_{k=1}^{q+1}[D,a_k]\Big)|D|^{-q-1}=\sum_{\mathscr{A}\subset\{2,\dots,q+1\}}(-1)^{n_{\tilde{\mathscr{A}}}}\mathcal{W}_{\tilde{\mathscr{A}}}(c)+\sum_{\mathscr{A}\subset\{2,\dots,q+1\}}(-1)^{n_{\hat{\mathscr{A}}}}\mathcal{W}_{\hat{\mathscr{A}}}(c)\bmod\mathcal{L}_{p/(q+2),\infty}.$$
Since every $\mathscr{B}\subset\{1,\dots,q+1\}$ coincides either with $\tilde{\mathscr{A}}$ or else with $\hat{\mathscr{A}}$ for a unique $\mathscr{A}\subset\{2,\dots,q+1\},$ the equation \eqref{eq:q<p} follows for $q+1.$ This proves the Lemma.
\end{proof}

\begin{lem}\label{kogom2} Let $(\mathcal{A},H,D)$ be a 
$QC^{\infty}$ spectral triple and let 
$c\in\mathcal{A}^{\otimes (p+1)}$ be a Hochschild cycle. 
Suppose that $|\mathscr{A}|\geq 2$ and $m-1,m\in\mathscr{A}$ for some $m.$
\begin{enumerate}[{\rm (a)}]
\item If $D^{-p}\in\mathcal{L}_{1,\infty},$ then
$
\mathcal{W}_{\mathscr{A}}(c)D^{-|\mathscr{A}|}\in[\mathcal{L}_{1,\infty},\mathcal{L}(H)].
$
\item If $D^{-p}\in\mathcal{M}_{1,\infty},$ then
$
\mathcal{W}_{\mathscr{A}}(c)D^{-|\mathscr{A}|}\in[\mathcal{M}_{1,\infty},\mathcal{L}(H)].
$
\end{enumerate}
\end{lem}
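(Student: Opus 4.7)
My plan is to use the Leibniz rule at the consecutive slots $m-1$ and $m$ (at which the product defining $\mathcal{W}_{\mathscr{A}}(c)$ carries $\delta(a_{m-1})$ and $\delta(a_m)$) to split the operator into two pieces, each of which lies in the commutator subspace by a separate mechanism. The central algebraic identity is
$$\delta(a_{m-1})\,\delta(a_m) = [|D|,\,a_{m-1}\,\delta(a_m)] - a_{m-1}\,\delta^2(a_m).$$
Writing $\mathcal{W}_{\mathscr{A}}(c) = L\cdot\delta(a_{m-1})\delta(a_m)\cdot R$, where $L$ and $R$ absorb the remaining commutator factors of the product, this splits $\mathcal{W}_{\mathscr{A}}(c)\,D^{-|\mathscr{A}|}$ into two pieces.

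For the first piece $L\,[|D|,a_{m-1}\delta(a_m)]\,R\,D^{-|\mathscr{A}|}$, I will use that $F$ commutes with $|D|$, so $D^{-|\mathscr{A}|}=F^{|\mathscr{A}|}|D|^{-|\mathscr{A}|}$ commutes with $|D|$. Absorbing one factor $|D|^{-1}$ from $D^{-|\mathscr{A}|}$ into the outer bracket converts the unbounded commutator into a bounded one, and one is left with a product of operators whose summability is tracked via the H\"older property in Equation \eqref{lpi mult}: each $[F,a_k]$ factor in $L$ and $R$ lies in $\mathcal{L}_{p,\infty}$, and all cross-commutators $[F,\delta(a_k)]$ arising from moving $|D|$ past $L$ and $R$ also lie in $\mathcal{L}_{p,\infty}$ by Proposition \ref{f der def}. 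Cyclic permutation modulo $[\mathcal{L}_{1,\infty},\mathcal{L}(H)]$ (which is valid as soon as every factor and every partial product has the right summability) then exhibits this first piece as an element of the commutator subspace.

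For the second piece, $L\cdot a_{m-1}\,\delta^2(a_m)\cdot R\,D^{-|\mathscr{A}|}$, the element $a_{m-1}$ now appears unadorned by a commutator, so structurally two tensor slots have been fused. I will recognize this expression as the value at $c$ of an operator-valued Hochschild coboundary $(b\psi)$, where $\psi:\mathcal{A}^{\otimes p}\to\mathcal{L}_{1,\infty}$ is a cochain on one fewer tensor factor, built from the $\mathcal{W}_{\mathscr{A}}$ construction by fusing the slots $m-1, m$ and inserting $\delta^2$ at the fused slot. The standard duality $(b\psi)(c)=\psi(bc)$ combined with the Hochschild cycle condition $bc=0$ then forces $(b\psi)(c)=0$ in the quotient by the commutator subspace, placing this piece also in $[\mathcal{L}_{1,\infty},\mathcal{L}(H)]$.

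The main obstacle will be the precise construction of $\psi$ so that $b\psi$ reproduces the second piece modulo the commutator subspace: the coboundary formula generates several additional terms corresponding to all possible adjacent fusions, and these extra terms must be controlled individually, either vanishing outright or being identified as commutators by separate cyclic-permutation arguments. This bookkeeping is exactly of the type carried out in Appendix A. The assertion (b) then follows by running the identical argument with $\mathcal{L}_{1,\infty}$ replaced by $\mathcal{M}_{1,\infty}$, since every H\"older estimate, derivation identity and cyclic-permutation step we use remains valid in the larger ideal.
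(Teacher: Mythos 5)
Your overall strategy — recognising a Hochschild coboundary and invoking $\psi(bc)=0$ for a cycle $c$, with everything tracked modulo the commutator subspace via an arbitrary trace — is exactly the paper's mechanism. However, the preliminary split you perform via the Leibniz identity $\delta(a_{m-1})\delta(a_m)=[|D|,a_{m-1}\delta(a_m)]-a_{m-1}\delta^2(a_m)$ departs from the paper's argument, and it creates a gap that you do not close. The paper does not split at all: for an arbitrary trace $\varphi$, the full scalar functional $c\mapsto\varphi(\mathcal{W}_{\mathscr{A}}(c)D^{-|\mathscr{A}|})$ is shown in Appendix~A to be the coboundary of a cochain $\theta$ carrying $\delta^2$ at the fused slot, and the cross-term $2\delta(a_{m-1})\delta(a_m)$ emerges from the double-Leibniz identity $\delta^2(a_{m-1}a_m)-a_{m-1}\delta^2(a_m)-\delta^2(a_{m-1})a_m=2\delta(a_{m-1})\delta(a_m)$ together with trace-class corrections from cycling $a_p$. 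So the split buys you nothing.

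The real gap is in your treatment of the first piece $L\,[|D|,a_{m-1}\delta(a_m)]\,R\,D^{-|\mathscr{A}|}$. You claim that cyclic permutation modulo $[\mathcal{L}_{1,\infty},\mathcal{L}(H)]$ exhibits it as a member of the commutator subspace for each elementary tensor. This fails for two reasons. First, the cyclic-permutation step $TS-ST\in[\mathcal{L}_{1,\infty},\mathcal{L}(H)]$ requires $S$ to be bounded; $|D|$ is unbounded, and the best reorganisation one gets is of the form $[|D|,Z]$ or $[Y,|D|^{-1}]$ with $|D|^{-1}\in\mathcal{L}_{p,\infty}$ (not $\mathcal{L}_{1,\infty}$), neither of which is known to lie in $[\mathcal{L}_{1,\infty},\mathcal{L}(H)]$ for arbitrary traces. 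Second, and more decisively, the claim is not even true: on a compact $p$-manifold with the classical Dirac operator, a symbol computation gives a nonzero noncommutative residue for $\Gamma a_0\,\delta(a_1\delta(a_2))\cdots|D|^{-p}$ on generic non-cycle elementary tensors (e.g.\ choosing $a_1=1$ and $a_0,a_2$ to be the same coordinate function produces $\int\nabla a_0\cdot\nabla a_2\neq 0$ after integration by parts), so the Dixmier trace is nonzero and the operator is not in the commutator subspace. The first piece is killed only by the cycle condition, and thus it must itself be folded into the Hochschild coboundary — which is what the paper does by handling $\mathcal{W}_{\mathscr{A}}(c)D^{-|\mathscr{A}|}$ in one go via the scalar cochain $\theta$ rather than splitting first.

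A smaller but related issue is your framing of the second piece as an "operator-valued" coboundary $(b\psi)(c)$. The wrap-around term $(-1)^p\psi(a_pa_0\otimes\cdots)$ only matches the desired operator after moving $a_p$ across $D^{-|\mathscr{A}|}$; that step uses the trace property and a trace-class estimate on $[|D|^{-|\mathscr{A}|},a_p]$, so the argument must be run at the level of scalar cochains $\varphi\circ(\cdot)$, as in the paper, not at the operator level.
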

\begin{proof} Let $\varphi$ be a trace on $\mathcal{L}_{1,\infty}$ (respectively, on $\mathcal{M}_{1,\infty}$). The mapping on $\mathcal{A}^{\otimes (p+1)}$
given by 
$$c\to\varphi(\mathcal{W}_{\mathscr{A}}(c)D^{-|\mathscr{A}|})$$
is the Hochschild coboundary (see Appendix \ref{cobos}) of the multilinear mapping
defined by
$$
a_0\otimes\cdots\otimes a_{p-1}\mapsto
\frac{(-1)^{m-1}}{2}\varphi\left(\Gamma a_0\prod_{k=1}^{m-2}[b_k,a_k]\delta^2(a_{m-1})
\prod_{k=m}^{p-1}[b_{k+1},a_k]D^{-|\mathscr{A}|}\right).
$$
Since a Hochschild coboundary vanishes on every Hochschild cycle, it follows that
$
\varphi(\mathcal{W}_{\mathscr{A}}(c)D^{-|\mathscr{A}|})=0
$
for every Hochschild cycle $c\in\mathcal{A}^{\otimes (p+1)}.$ 
Since $\varphi$ is an arbitrary trace, the assertion follows.
\end{proof}

\begin{lem}\label{kogom3} Let $(\mathcal{A},H,D)$ be a $QC^{\infty}$ spectral triple and let $c\in\mathcal{A}^{\otimes (p+1)}$ be a Hochschild cycle. Suppose that $|\mathscr{A}_1|=|\mathscr{A}_2|\geq 2$ and that the symmetric difference $\mathscr{A}_1\Delta\mathscr{A}_2=\{m-1,m\}$ for some $m.$
\begin{enumerate}[{\rm (a)}]
\item If $D^{-p}\in\mathcal{L}_{1,\infty},$ then
$\mathcal{W}_{\mathscr{A}_1}(c)D^{-|\mathscr{A}_1|}+\mathcal{W}_{\mathscr{A}_2}(c)D^{-|\mathscr{A}_2|}\in[\mathcal{L}_{1,\infty},\mathcal{L}(H)].$
\item If $D^{-p}\in\mathcal{M}_{1,\infty},$ then
$\mathcal{W}_{\mathscr{A}_1}(c)D^{-|\mathscr{A}_1|}+\mathcal{W}_{\mathscr{A}_2}(a)D^{-|\mathscr{A}_2|}\in[\mathcal{M}_{1,\infty},\mathcal{L}(H)].$
\end{enumerate}
\end{lem}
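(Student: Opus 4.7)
The strategy mirrors that of Lemma \ref{kogom2}: realise $\varphi$ of the sum as a Hochschild coboundary evaluated at $c$, so that $bc=0$ forces vanishing modulo a residual in a strictly smaller Lorentz ideal already contained in the ambient commutator subspace. By the Zorn argument used immediately after Definition \ref{def:uni-meas}, it suffices to fix an arbitrary trace $\varphi$ on $\mathcal{L}_{1,\infty}$ (respectively $\mathcal{M}_{1,\infty}$) and prove $\varphi(\mathcal{W}_{\mathscr{A}_1}(c)D^{-|\mathscr{A}_1|}+\mathcal{W}_{\mathscr{A}_2}(c)D^{-|\mathscr{A}_2|})=0$ for every Hochschild cycle $c$.

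Without loss of generality take $m-1\in\mathscr{A}_1\setminus\mathscr{A}_2$ and $m\in\mathscr{A}_2\setminus\mathscr{A}_1$. The algebraic key is the identity, following from $F|D|=|D|F$ and two applications of Leibniz,
\begin{equation*}
\delta(a_{m-1})[F,a_m]+[F,a_{m-1}]\delta(a_m)=[F,\delta(a_{m-1}a_m)]-[F,\delta(a_{m-1})]\,a_m-a_{m-1}\,[F,\delta(a_m)].
\end{equation*}
Set $\mathscr{B}=\mathscr{A}_1\cap\mathscr{A}_2$ and let $b_k=|D|$ if $k\in\mathscr{B}$, $b_k=F$ otherwise. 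Define
\begin{equation*}
\theta(x_0\otimes\cdots\otimes x_{p-1}):=(-1)^{m-1}\varphi\Bigl(\Gamma x_0\prod_{k=1}^{m-2}[b_k,x_k]\cdot[F,\delta(x_{m-1})]\cdot\prod_{k=m}^{p-1}[b_{k+1},x_k]\cdot D^{-|\mathscr{A}_1|}\Bigr).
\end{equation*}
By Proposition \ref{f der def}, $[F,\delta(x_{m-1})]\in\mathcal{L}_{p,\infty}$; combined with the H\"older estimate \eqref{lpi mult}, the argument of $\varphi$ lies in $\mathcal{L}_{p/(p+1),\infty}$ and so $\theta$ is well-defined.

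Expanding $(b\theta)(c)$ by the coboundary formula, the merger at $j=m-1$ replaces $x_{m-1}$ by $a_{m-1}a_m$ and so contributes $[F,\delta(a_{m-1}a_m)]$ at that slot; by the displayed identity this equals the desired cross-term sum plus the residuals $[F,\delta(a_{m-1})]\,a_m$ and $a_{m-1}\,[F,\delta(a_m)]$. These residuals pair with the $[F,\delta(\cdot)]$-factors produced by the mergers $j\in\{m-2,m,p\}$ (the last being the wrap-around); using cyclicity of $\varphi$, the Leibniz rule $[b,xy]=[b,x]y+x[b,y]$, and $[\Gamma,a]=0$, each such pair cancels up to a commutator $[a_k,D^{-1}]=-D^{-1}\partial(a_k)D^{-1}$, absorbed into the product. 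By \eqref{lpi mult} the resulting operator is in $\mathcal{L}_{r,\infty}$ with $r=p/(p+1)<1$, and Proposition \ref{komm description} places $\mathcal{L}_{r,\infty}\subset[\mathcal{L}_{1,\infty},\mathcal{L}(H)]$, so $\varphi$ annihilates each such residual. Consequently $(b\theta)(c)=\varphi(\mathcal{W}_{\mathscr{A}_1}(c)D^{-|\mathscr{A}_1|}+\mathcal{W}_{\mathscr{A}_2}(c)D^{-|\mathscr{A}_2|})=\theta(bc)=0$, proving (a). Part (b) is verbatim, using that every trace on $\mathcal{M}_{1,\infty}$ annihilates $\mathcal{L}_{r,\infty}$ for $r<1$.

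The main difficulty will be the combinatorial bookkeeping of this cancellation: one must track how each of the two residuals arising at $j=m-1$ is matched by, and cancels against, the corresponding $[F,\delta(\cdot)]$-term in an adjacent merger, verifying that every leftover piece is forced by the H\"older estimate into a Lorentz ideal of index strictly below $1$. This is the adjacent-swap analogue of the $\delta^2$-cancellation driving Lemma \ref{kogom2}, but because $[F,\delta(\cdot)]$ is merely in $\mathcal{L}_{p,\infty}$ rather than bounded, the summability margin is tighter and the cancellation must be confirmed slot-by-slot.
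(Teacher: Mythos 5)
Your proof is correct and follows essentially the same route as the paper: you fix an arbitrary trace $\varphi$, realise the target sum as $(b\theta)(c)$ for the identical multilinear mapping $\theta$ (same formula, same sign $(-1)^{m-1}$), invoke the Leibniz identity expanding $[F,\delta(a_{m-1}a_m)]$, and use cyclicity of $\varphi$ together with the H\"older estimate to dispose of residual commutators in $\mathcal{L}_r$ with $r<1$. The slot-by-slot bookkeeping you flag as the main difficulty is exactly what the paper delegates to Computation~2 in Appendix~\ref{cobos}, carried out there only for $p=2$ with the general case declared similar.
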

\begin{proof} Let $\varphi$ be a trace on $\mathcal{L}_{1,\infty}$ (respectively, on $\mathcal{M}_{1,\infty}$). The mapping on $\mathcal{A}^{\otimes (p+1)}$ given by
$$c\to\varphi(\mathcal{W}_{\mathscr{A}_1}(c)D^{-|\mathscr{A}_1|})+\varphi(\mathcal{W}_{\mathscr{A}_2}(c)D^{-|\mathscr{A}_2|})$$
is the Hochschild coboundary (see Appendix \ref{cobos}) of the multilinear mapping
defined by
$$
a_0\otimes\cdots\otimes a_{p-1}\to(-1)^{m-1}\varphi\left(\Gamma a_0\prod_{k=1}^{m-2}
[b_k,a_k][F,\delta(a_{m-1})]\prod_{k=m}^{p-1}[b_{k+1},a_k]D^{-|\mathscr{A}_1|}\right).
$$
The proof is concluded by using the same argument as in the preceding lemma.
\end{proof}

\begin{cor}\label{kogom4} Let $(\mathcal{A},H,D)$ be a $QC^{\infty}$ spectral triple and let $c\in\mathcal{A}^{\otimes (p+1)}$ be a Hochschild cycle. Suppose that $|\mathscr{A}|\geq 2.$
\begin{enumerate}[{\rm (a)}]
\item If $D^{-p}\in\mathcal{L}_{1,\infty},$ then
$\mathcal{W}_{\mathscr{A}}(c)D^{-|\mathscr{A}|}\in[\mathcal{L}_{1,\infty},\mathcal{L}(H)].$
\item If $D^{-p}\in\mathcal{M}_{1,\infty},$ then
$\mathcal{W}_{\mathscr{A}}(c)D^{-|\mathscr{A}|}\in[\mathcal{M}_{1,\infty},\mathcal{L}(H)].$
\end{enumerate}
\end{cor}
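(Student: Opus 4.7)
The plan is to deduce Corollary \ref{kogom4} from Lemmas \ref{kogom2} and \ref{kogom3} by a short combinatorial reduction. The idea is that Lemma \ref{kogom3} permits ``sliding'' a single element of $\mathscr{A}$ by one position at a time, modulo the commutator subspace, while Lemma \ref{kogom2} handles every $\mathscr{A}$ in which two adjacent positions are already occupied. So it suffices to slide the elements of $\mathscr{A}$ around until two of them become adjacent.

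Concretely, I would write $\mathscr{A}=\{i_1<i_2<\cdots<i_k\}$ with $k=|\mathscr{A}|\geq 2$ and induct on the gap $i_2-i_1$. If $i_2=i_1+1$, apply Lemma \ref{kogom2} directly (the consecutive pair $\{i_1,i_2\}$ lies in $\mathscr{A}$). Otherwise $i_1+1<i_2$, so $i_1+1\notin\mathscr{A}$, and setting $\mathscr{A}'=(\mathscr{A}\setminus\{i_1\})\cup\{i_1+1\}$ produces a set of the same cardinality with $\mathscr{A}\Delta\mathscr{A}'=\{i_1,i_1+1\}$. Lemma \ref{kogom3} then gives
\[
\mathcal{W}_{\mathscr{A}}(c)D^{-|\mathscr{A}|}+\mathcal{W}_{\mathscr{A}'}(c)D^{-|\mathscr{A}'|}\in[\mathcal{L}_{1,\infty},\mathcal{L}(H)]
\]
(and the analogous inclusion in $[\mathcal{M}_{1,\infty},\mathcal{L}(H)]$ under the corresponding summability hypothesis). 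Since the first gap of $\mathscr{A}'$ is $i_2-(i_1+1)$, strictly smaller than $i_2-i_1$, the induction terminates after at most $i_2-i_1-1$ steps, at which point Lemma \ref{kogom2} closes the argument.

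Both parts (a) and (b) are handled in parallel because Lemmas \ref{kogom2} and \ref{kogom3} each come with matching $\mathcal{L}_{1,\infty}$ and $\mathcal{M}_{1,\infty}$ versions, and the inductive step uses them uniformly. At each intermediate stage the operator $\mathcal{W}_{\mathscr{A}'}(c)D^{-|\mathscr{A}'|}$ still belongs to the appropriate ideal by the remark following the definition of $\mathcal{W}_{\mathscr{A}}$ (Proposition \ref{f der def} combined with the H\"older property \eqref{lpi mult}), so the equivalences modulo commutator subspaces are meaningful throughout. I do not expect any genuine obstacle here: the substantive Hochschild coboundary computations have already been absorbed into Lemmas \ref{kogom2} and \ref{kogom3}, and Corollary \ref{kogom4} reduces to this purely combinatorial bookkeeping.
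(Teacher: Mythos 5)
Your argument is correct and coincides with the paper's proof: both slide the smaller of two consecutive elements of $\mathscr{A}$ toward the larger one position at a time, using Lemma \ref{kogom3} for each one-step shift (modulo the commutator subspace) and Lemma \ref{kogom2} once the pair becomes adjacent. The paper phrases it as an explicit chain $\mathscr{A}=\mathscr{A}_0,\mathscr{A}_1,\dots,\mathscr{A}_{m-n-1}$ between arbitrary consecutive $n<m$ in $\mathscr{A}$, whereas you induct on the gap $i_2-i_1$ of the two smallest elements, but the bookkeeping is identical.
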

\begin{proof} Let $n<m$ be such that $n,m\in\mathscr{A}.$ Without loss of generality, $i+n\notin\mathscr{A}$ for all $0<i<m-n.$ Set
$$\mathscr{A}_i=(\mathscr{A}\backslash\{n\})\cup\{i+n\},\quad 0\leq i<m-n.$$
We have
\begin{enumerate}
\item $|\mathscr{A}_i|=|\mathscr{A}|$ and $|\mathscr{A}_i\Delta\mathscr{A}_{i-1}|=2$ for all $1\leq i<m-n.$
\item $\mathscr{A}_0=\mathscr{A}$ and $m-1,m\in\mathscr{A}_{m-n-1}.$
\end{enumerate}
It follows from Lemma \ref{kogom3} that $\mathcal{W}_{\mathscr{A}_{m-n-1}}(a)D^{-1}\in[\mathcal{L}_{1,\infty},\mathcal{L}(H)]$ (respectively, $\mathcal{W}_{\mathscr{A}_{m-n-1}}(a)D^{-1}\in[\mathcal{M}_{1,\infty},\mathcal{L}(H)]$). The assertion follows by applying Lemma \ref{kogom2} $m-n-1$ times.
\end{proof}

\begin{lem}\label{kogom5}  Let $(\mathcal{A},H,D)$ be a $QC^{\infty}$ spectral triple and let $c\in\mathcal{A}^{\otimes (p+1)}$ be a Hochschild cycle.
\begin{enumerate}[{\rm (a)}]
\item\label{k5a} If $D^{-p}\in\mathcal{L}_{1,\infty},$ then
$\mathcal{W}_{\varnothing}(c)\in[\mathcal{L}_{1,\infty},\mathcal{L}(H)].$
\item\label{k5b} If $D^{-p}\in\mathcal{M}_{1,\infty},$ then
$
\mathcal{W}_{\varnothing}(c)\in[\mathcal{M}_{1,\infty},\mathcal{L}(H)].
$
\end{enumerate}
\end{lem}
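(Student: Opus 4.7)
My plan is to deduce the lemma from two facts. First, a purely algebraic identity relating $\mathcal{W}_\varnothing(c)$ to the full integrand $ch(c) = F\Gamma\prod_{k=0}^p[F,a_k]$ of the Chern character:
\begin{equation*}
ch(c) = \varepsilon\bigl(\mathcal{W}_\varnothing(c) + F\mathcal{W}_\varnothing(c)F\bigr),
\end{equation*}
with $\varepsilon = +1$ if $p$ is odd and $\varepsilon = -1$ if $p$ is even. Second, the observation that $ch(c)$, having one extra $\mathcal{L}_{p,\infty}$-factor compared to $\mathcal{W}_\varnothing(c)$, actually lies in $\mathcal{L}_1$: by Proposition \ref{f der def} and the H\"older property \eqref{lpi mult}, $ch(c) \in \mathcal{L}_{p/(p+1),\infty} \subset \mathcal{L}_1$ in case (a), and the analogous conclusion $ch(c) \in \mathcal{L}_1$ holds in case (b) via the corresponding H\"older inequality for $\mathcal{M}_{1,\infty}^{(p)}$.

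To derive the algebraic identity, I will expand $F[F,a_0] = a_0 - Fa_0F$ using $F^2 = 1$, then apply the anticommutation $F[F,a_k] = -[F,a_k]F$ for $k\ge 1$ (also a consequence of $F^2=1$) to push the surviving $F$'s past the product $\prod_{k=1}^p[F,a_k]$, and finally apply the parity rule relating $\Gamma$ and $F$ ($\Gamma F = F\Gamma$ in the odd case, since $\Gamma = 1$; $\Gamma F = -F\Gamma$ in the even case, from $\{D,\Gamma\}=0$ together with $[|D|,\Gamma]=0$) to move $\Gamma$ into canonical position. Only two terms survive, and they are precisely $\varepsilon\mathcal{W}_\varnothing(c)$ and $\varepsilon F\mathcal{W}_\varnothing(c)F$.

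With the identity in hand, observe that $F\mathcal{W}_\varnothing(c)F - \mathcal{W}_\varnothing(c) = [F,\mathcal{W}_\varnothing(c)F]$ (using $F^2=1$), which is a commutator whose operand $\mathcal{W}_\varnothing(c)F$ lies in $\mathcal{L}_{1,\infty}$ (respectively, $\mathcal{M}_{1,\infty}$). Hence $F\mathcal{W}_\varnothing(c)F \equiv \mathcal{W}_\varnothing(c)$ modulo the commutator subspace, and so $ch(c) \equiv 2\varepsilon\mathcal{W}_\varnothing(c)$ there. On the other hand, $ch(c) \in \mathcal{L}_1\cap\mathcal{L}_{1,\infty} \subset [\mathcal{L}_{1,\infty},\mathcal{L}(H)]$: the inclusion follows from Proposition \ref{komm description} since Lidskii's theorem gives $|\sum_{k=0}^n\lambda(k,ch(c))| \le \|ch(c)\|_1 = O(1)$. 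Dividing by $2\varepsilon$ yields (a). For (b), the same argument applies, with the analogous inclusion $\mathcal{L}_1\cap\mathcal{M}_{1,\infty} \subset [\mathcal{M}_{1,\infty},\mathcal{L}(H)]$ provided by the description of the commutator subspace of $\mathcal{M}_{1,\infty}$ in \cite{LSZ}.

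The principal obstacle is the sign bookkeeping in the algebraic identity, where $\Gamma$ and $F$ interact with parity-dependent signs and each of the $p$ factors $[F,a_k]$ introduces an additional sign when $F$ or $\Gamma$ is pushed across it; so one has to keep track of a global factor $(-1)^p$ in several places. Once the signs are settled, the rest is a structural consequence of Section \ref{sec:prelim-traces}. I note in passing that the Hochschild cycle hypothesis on $c$ is not actually used in this argument, so the conclusion in fact holds for arbitrary $c\in\mathcal{A}^{\otimes(p+1)}$.
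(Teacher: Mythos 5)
Your proposal is correct and takes essentially the same route as the paper: you verify the same algebraic identity $ch(c) = (-1)^{p-1}\bigl(\mathcal{W}_\varnothing(c) + F\mathcal{W}_\varnothing(c)F\bigr)$ (the paper records it in the rearranged form $2\mathcal{W}_\varnothing(c) = [F, F\mathcal{W}_\varnothing(c)] + (-1)^{p-1}ch(c)$, which is equivalent since $[F,F\mathcal{W}_\varnothing(c)] = -[F,\mathcal{W}_\varnothing(c)F]$), and then argue that $ch(c) \in \mathcal{L}_1$ by Proposition \ref{f der def} and H\"older, so both of $[F,\mathcal{W}_\varnothing(c)F]$ and $ch(c)$ lie in the commutator subspace. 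Your side remark that the Hochschild cycle hypothesis on $c$ is not used is also consistent with the paper's proof, which likewise never invokes $bc=0$ in this lemma.
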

\begin{proof} We prove \eqref{k5a} only (the proof of \eqref{k5b} is identical). Let $a_0\otimes\cdots\otimes a_p\in\mathcal{A}^{\otimes (p+1)}.$ We have
\begin{equation}
2\Gamma a_0\prod_{k=1}^p[F,a_k]
=[F,F\Gamma a_0\prod_{k=1}^p[F,a_k]]+(-1)^{p-1}F\Gamma\prod_{k=0}^p[F,a_k]
\label{eq:bob1}
\end{equation}
so that 
\begin{equation}
2\mathcal{W}_{\varnothing}(c)=[F,F\mathcal{W}_{\varnothing}(c)]+(-1)^{p-1}ch(c).
\label{eq:bob2}
\end{equation}
Since $\mathcal{W}_{\varnothing}(c)\in\mathcal{L}_{1,\infty},$ 
it follows that $[F,F\mathcal{W}_{\varnothing}(c)]\in[\mathcal{L}_{1,\infty},\mathcal{L}(H)]$. 
By Proposition \ref{f der def} and the H\"older property in Equation \eqref{lpi mult}, we have 
$ch(c)\in\mathcal{L}_1\subset[\mathcal{L}_{1,\infty},\mathcal{L}(H)].$ 
Thus, $\mathcal{W}_{\varnothing}(c)\in[\mathcal{L}_{1,\infty},\mathcal{L}(H)].$
\end{proof}

We are now ready to prove the main result of this subsection.

\begin{proof}[Proof of Proposition \ref{reduction}] As in preceding lemma, we prove \eqref{reda} only (the proof of \eqref{redb} is identical). For every Hochschild cycle $c\in\mathcal{A}^{\otimes (p+1)},$ it follows from Lemma \ref{kogom6} that
$$
\Omega(c)|D|^{-p}\in\sum_{\mathscr{A}
\subset \{1,\dots,p\}}(-1)^{n_{\mathscr{A}}}\mathcal{W}_{\mathscr{A}}(c)D^{-|\mathscr{A}|}
+\mathcal{L}_1.
$$
Applying Corollary \ref{kogom4} to every summand in the sum $\sum_{|\mathscr{A}|\geq 2}$ and Lemma \ref{kogom5}, we infer that
$$
\Omega(c)|D|^{-p}\in\sum_{|\mathscr{A}|=1}
(-1)^{n_{\mathscr{A}}}\mathcal{W}_{\mathscr{A}}(c)D^{-1}+[\mathcal{L}_{1,\infty},\mathcal{L}(H)].
$$
If $\mathscr{A}=\{m\},$ then $n_{\mathscr{A}}=p-m.$ Therefore,
$$
\Omega(c)|D|^{-p}\in\sum_{m=1}^p(-1)^{p-m}\mathcal{W}_m(c)D^{-1}
+[\mathcal{L}_{1,\infty},\mathcal{L}(H)].
$$
Applying Lemma \ref{kogom3} $p-m$ times, we obtain
$$
\mathcal{W}_m(c)D^{-1}-(-1)^{p-m}\mathcal{W}_p(c)D^{-1}
\in[\mathcal{L}_{1,\infty},\mathcal{L}(H)],\quad 1\leq m<p.
$$
This suffices to conclude the proof.
\end{proof}

\subsection{Some commutator estimates}\label{commutator section}

Our method of proof of  Proposition \ref{second cycle lemma} 
exploits some heat semigroup asymptotics. For this we need to 
introduce, in this subsection, a number of technical estimates for 
commutators involving the operator valued function 
$s\to f(s|D|),$ where $f(s)=e^{-|s|^{p+1}},$
and  $s\in\mathbb{R}.$  As before in the text, $p\in\mathbb{N}.$ 
We make essential use of the fact that $\hat{f''}\in L_1(-\infty,\infty)$ 
(this fact follows from Lemma 7 in \cite{PS}).

\begin{lem}\label{first commutator lemma} If $(\mathcal{A},H,D)$ is a
$QC^{\infty}$ spectral triple, then
$$
\|[f(s|D|),a]-sf'(s|D|)\delta(a)\|_{\infty}\leq s^2\|\hat{f''}\|_1\|\delta^2(a)\|_{\infty}
$$
$$
\|[f(s|D|),a]-s\delta(a)f'(s|D|)\|_{\infty}\leq s^2\|\hat{f''}\|_1\|\delta^2(a)\|_{\infty}
$$
for all $s>0$ and for all $a\in\mathcal{A}.$
\end{lem}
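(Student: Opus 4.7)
The natural approach is to combine the Fourier representation $f(s|D|) = \int_{\mathbb R}\hat f(t)\,e^{its|D|}\,dt$ (valid because the hypothesis $\hat{f''}\in L_1(\mathbb R)$ recalled from \cite{PS} implies in particular that $\hat f$ is integrable) with a second-order Taylor expansion of $[e^{iu|D|},a]$ based on the derivation $\delta$. The two inequalities then differ only in which side of $\delta(a)$ is peeled off in the expansion.

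For the pointwise estimate, consider the smooth operator-valued function $g(u)=e^{iu|D|}ae^{-iu|D|}$. The $QC^\infty$ hypothesis (Definition \ref{qc}) ensures $\delta(a),\delta^2(a)\in\mathcal L(H)$, so $g$ is twice differentiable in $u$ with $g'(u)=ie^{iu|D|}\delta(a)e^{-iu|D|}$ and $g''(u)=-e^{iu|D|}\delta^2(a)e^{-iu|D|}$. Taylor's theorem with integral remainder, followed by right-multiplication by $e^{iu|D|}$, yields
\[ [e^{iu|D|},a]-iu\,\delta(a)e^{iu|D|} = -\int_0^u(u-v)\,e^{iv|D|}\delta^2(a)e^{i(u-v)|D|}\,dv, \]
an operator of norm at most $\tfrac{u^2}{2}\|\delta^2(a)\|_\infty$. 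The mirror identity, obtained by expanding $e^{-iu|D|}ae^{iu|D|}$ instead and left-multiplying by $e^{iu|D|}$, replaces $\delta(a)e^{iu|D|}$ by $e^{iu|D|}\delta(a)$ on the left-hand side and enjoys the same norm bound.

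Now set $u=ts$ in each identity and integrate against $\hat f(t)\,dt$. Since $f'(s|D|)=\int\hat f(t)(it)e^{its|D|}\,dt$, the leading terms become $s\delta(a)f'(s|D|)$ and $sf'(s|D|)\delta(a)$ respectively. The two remainder terms are bounded by
\[ s^2\|\delta^2(a)\|_\infty\int_{\mathbb R}t^2\,|\hat f(t)|\,dt, \]
and a double integration by parts identifies $\widehat{f''}(t)=-t^2\hat f(t)$, so that $\int t^2|\hat f(t)|\,dt=\|\hat{f''}\|_1$. Both stated inequalities follow (perhaps with a sharper constant $\tfrac{1}{2}$ before $\|\hat{f''}\|_1$, which the stated bound absorbs).

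I anticipate no essential obstacle: the only bookkeeping issue is tracking the operator ordering in Taylor's formula, which is what distinguishes the two inequalities, and the summability of $(1+t^2)|\hat f(t)|$ legitimises the use of Fubini to swap the operator-valued Fourier integral with the Taylor remainder integral.
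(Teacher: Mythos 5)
Your proof is correct and follows essentially the same route as the paper: both represent $f(s|D|)$ by its Fourier integral and expand the commutator with $e^{iu|D|}$ to second order in $u$ via the derivation $\delta$, identifying the remainder through $\widehat{f''}(t)=-t^2\hat f(t)$. Your single-step second-order Taylor expansion of $e^{iu|D|}ae^{-iu|D|}$ is a compact repackaging of the paper's two successive applications of the Duhamel formula, and your observation that the true constant is $\tfrac12\|\hat{f''}\|_1$ is consistent with the paper absorbing that factor into the stated bound.
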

\begin{proof} We use the method of \cite{BeF,CPRS}. It is clear that
\begin{equation}\label{ell1}
[f(s|D|),a]=\int_{-\infty}^{\infty}\hat{f}(u)[e^{ius|D|},a]du.
\end{equation}
An elementary computation shows that
\begin{equation}\label{ell2}
[e^{ius|D|},a]=ius\int_0^1e^{iuvs|D|}\delta(a)e^{iu(1-v)s|D|}dv.
\end{equation}
Combining \eqref{ell1} and \eqref{ell2}, we obtain
$$
[f(s|D|),a]=s\int_{-\infty}^{\infty}\int_0^1\hat{f'}(u)e^{iuvs|D|}\delta(a)e^{iu(1-v)s|D|}dvdu.
$$
Therefore,
\begin{align*}
[f(s|D|),a]-sf'(s|D|)\delta(a)&
=s\int_{-\infty}^{\infty}\int_0^1
\hat{f'}(u)\Big(e^{iuvs|D|}\delta(a)e^{iu(1-v)s|D|}-e^{ius|D|}\delta(a)\Big)dvdu\\
&=s\int_{-\infty}^{\infty}\int_0^1\hat{f'}(u)\Big(e^{iuvs|D|}[\delta(a),e^{iu(1-v)s|D|}]\Big)dvdu.
\end{align*}
As in Equation \eqref{ell2}, we have
$$
[\delta(a),e^{iu(1-v)s|D|}]=-iu(1-v)s\int_0^1e^{iu(1-v)sw|D|}\delta^2(a)e^{iu(1-v)s(1-w)|D|}dw.
$$
Hence,
$$
[f(s|D|),a]-sf'(s|D|)\delta(a)=-s^2\int_{-\infty}^{\infty}\int_0^1\int_0^1\hat{f''}(u)(1-v)e^{iu(1-v)sw|D|}\delta^2(a)e^{iu(1-v)s(1-w)|D|}dwdvdu.
$$
The first inequality follows immediately. The proof of the second inequality is similar so we omit it.
\end{proof}

\begin{lem}\label{eddl} Let $D$ be an invertible unbounded self-adjoint operator.
\begin{enumerate}[{\rm (a)}]
\item\label{eddla} If $D^{-p}\in\mathcal{L}_{1,\infty},$ then
$
{\rm Tr}(f(s|D|))=O(s^{-p}),\quad {\rm Tr}(|D|^{-p-1}(1-f(s|D|)))=O(s),\quad s\to0.
$
\item\label{eddlb} If $D^{-p}\in\mathcal{M}_{1,\infty},$ then (for every $\varepsilon>0$)
$$
{\rm Tr}(f(s|D|))=O(s^{-p-\varepsilon}),\quad {\rm Tr}(|D|^{-p-1}(1-f(s|D|)))
=O(s^{1-\varepsilon}),\quad s\to0.
$$
\end{enumerate}
\end{lem}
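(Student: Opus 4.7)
The plan is to translate each summability hypothesis into an explicit lower bound on the eigenvalues of $|D|$ and then evaluate both traces by comparison with elementary integrals. Write $0 < \lambda_0 \leq \lambda_1 \leq \cdots$ for the eigenvalues of $|D|$, so that $\mu(k,|D|^{-p}) = \lambda_k^{-p}$. In case (a), $D^{-p} \in \mathcal{L}_{1,\infty}$ directly gives $\lambda_k \geq c(k+1)^{1/p}$. In case (b), I apply the inclusion $\mathcal{M}_{1,\infty} \subset \mathcal{L}_{1+\delta}$, valid for every $\delta > 0$ because a decreasing singular value sequence $V$ with logarithmic partial sums satisfies $k\mu(k,V) \leq \sum_{j\leq k}\mu(j,V) = O(\log k)$, to the operator $|D|^{-p} \in \mathcal{M}_{1,\infty}$. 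This yields $|D|^{-p(1+\delta)} \in \mathcal{L}_1$ and hence $\lambda_k \geq c_{\delta}(k+1)^{1/(p(1+\delta))}$. Setting $\varepsilon = p\delta$, any $\varepsilon > 0$ is attainable.

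For the first trace, ${\rm Tr}(f(s|D|)) = \sum_k e^{-(s\lambda_k)^{p+1}}$ is dominated by $\sum_k e^{-c(s(k+1)^{1/p})^{p+1}}$ in case (a); the substitution $u = cs(k+1)^{1/p}$ and the comparison of the sum to the corresponding integral then give $O(s^{-p})$ as $s\to 0$. The identical computation in case (b) with $1/p$ replaced by $1/(p(1+\delta))$ produces $O(s^{-p(1+\delta)}) = O(s^{-p-\varepsilon})$.

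For the second trace, I use the elementary inequality $1 - e^{-x} \leq \min(x,1)$ to split
\[
{\rm Tr}(|D|^{-p-1}(1-f(s|D|))) \leq s^{p+1}\,|\{k: s\lambda_k \leq 1\}| + \sum_{k:\, s\lambda_k > 1}\lambda_k^{-p-1}.
\]
In case (a), $|\{k: \lambda_k \leq 1/s\}| = O(s^{-p})$ makes the first term $O(s)$, while the tail sum is bounded by $\sum_{k \geq k_0}(k+1)^{-(p+1)/p} = O(k_0^{-1/p}) = O(s)$ for $k_0 = O(s^{-p})$, using that $(p+1)/p > 1$. In case (b), the parallel estimates yield $O(s^{p+1})\cdot O(s^{-p(1+\delta)}) = O(s^{1-p\delta})$ for the first term and the same order for the tail, provided $\delta < 1/p$ so that the tail converges; both are $O(s^{1-\varepsilon})$.

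The main obstacle is the weaker control in case (b): $\mathcal{M}_{1,\infty}$ does not furnish $O(1/k)$ decay, so the $\varepsilon$ loss is intrinsic to the embedding into $\mathcal{L}_{1+\delta}$, and one must check that $\delta$ can still be chosen small enough that the relevant tail sums remain convergent while $\varepsilon = p\delta$ is arbitrarily small. The underlying sum-to-integral comparisons are routine once the eigenvalue bounds have been extracted.
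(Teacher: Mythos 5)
Your argument follows essentially the same route as the paper. For part (a) the paper invokes its Lemma \ref{estimate} with $V=|D|^{-p}$, $\alpha=1+1/p$, which internally does exactly your comparison-operator plus sum-to-integral computation; for part (b) the paper extracts $\mu(k,D^{-p-\varepsilon})=O(1/(k+1))$ from the logarithmic bound on partial sums, which is the same eigenvalue lower bound $\lambda_k\gtrsim(k+1)^{1/(p+\varepsilon)}$ that you get via $\mathcal{M}_{1,\infty}\subset\mathcal{L}_{1+\delta}$. So the two proofs are mathematically identical up to packaging.

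One step you should tighten: in the tail estimate you write that $\sum_{k:\,s\lambda_k>1}\lambda_k^{-p-1}\leq\sum_{k\geq k_0}(k+1)^{-(p+1)/p}=O(s)$ ``for $k_0=O(s^{-p})$''. But the true starting index $k_1=\min\{k:\lambda_k>1/s\}$ is only known to be \emph{at most} $O(s^{-p})$; it can be much smaller when $\lambda_k$ grows quickly, in which case $\sum_{k\geq k_1}(k+1)^{-(p+1)/p}$ is larger than your claimed bound. The fix is to use both pieces of information at once: on the tail each term satisfies $\lambda_k^{-p-1}<s^{p+1}$ as well as $\lambda_k^{-p-1}\leq c^{-p-1}(k+1)^{-(p+1)/p}$, so
$$
\sum_{k:\,s\lambda_k>1}\lambda_k^{-p-1}\leq\sum_{k\geq0}\min\left(s^{p+1},\,c^{-p-1}(k+1)^{-(p+1)/p}\right)=O(s),
$$
the estimate following by splitting at the crossover index $k^*\asymp s^{-p}$. (Equivalently, one can combine your two sums into $\sum_k\min(s^{p+1},\lambda_k^{-p-1})$ from the start, as the paper implicitly does by bounding the single monotone function $x\mapsto x^{-p-1}(1-e^{-(sx)^{p+1}})$ against the comparison operator.) The same remark applies to case (b). With this adjustment your proof is complete and coincides with the paper's.
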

\begin{proof} Using Lemma \ref{estimate} with $V=|D|^{-p}$ and $\alpha=1+1/p,$ we obtain \eqref{eddla}. We now prove \eqref{eddlb}. Since $D^{-p}\in\mathcal{M}_{1,\infty},$ it follows that
$$
(k+1)\mu(k,D^{-p})\leq\sum_{m=0}^k\mu(m,D^{-p})\leq {\rm const}\cdot\log(k+2).
$$
Hence,
$$
\mu(k,D^{-p-\varepsilon})\leq
\left({\rm const}\cdot\frac{\log(k+2)}{k+1}\right)^{(p+\varepsilon)/p}
\leq\frac{{\rm const}}{k+1},\quad k\geq0.
$$
Select an operator $D_0\leq D$ (using the same eigenbasis) such that
$
\mu(k,D_0^{-p-\varepsilon})=\frac{{\rm const}}{k+1},\  k\geq0.
$
In what follows, we assume, to reduce the notation, that ${\rm const}=1.$
For the first equality we have:
$$
{\rm Tr}(f(s|D|))\leq{\rm Tr}(f(s|D_0|))=\sum_{k=1}^{\infty}e^{-(sk^{1/(p+\varepsilon)})^{p+1}}
\leq\int_0^{\infty}e^{-(su^{1/(p+\varepsilon})^{p+1}}du=s^{-p-\varepsilon}\int_0^{\infty}e^{-v^{(p+1)/(p+\varepsilon)}}dv.
$$

In order to prove the second equality, note that the mapping 
$s\to s^{-1}(1-e^{-s})$ is decreasing on $(0,\infty)$ and so is the mapping $s\to s^{-p-1}(1-f(s)).$ It follows that
\begin{align*}
{\rm Tr}(&|D|^{-p-1}(1-f(s|D|)))\leq {\rm Tr}(|D_0|^{-p-1}(1-f(s|D_0|)))
=\sum_{k=1}^{\infty}(k^{1/(p+\varepsilon)})^{-p-1}(1-e^{-(sk^{1/(p+\varepsilon)})^{p+1}})\\
&\leq\int_0^{\infty}u^{-(p+1)/(p+\varepsilon)}(1-e^{-(su^{1/(p+\varepsilon)})^{p+1}})du
=s^{1-\varepsilon}\int_0^{\infty}v^{-(p+1)/(p+\varepsilon)}(1-e^{-v^{(p+1)/(p+\varepsilon)}})dv.
\end{align*}
\end{proof}

\begin{lem}\label{gr antik} Let $(\mathcal{A},H,D)$ be a $QC^{\infty}$ spectral triple and let $a\in\mathcal{A}.$
\begin{enumerate}[{\rm (a)}]
\item If $D^{-p}\in\mathcal{L}_{1,\infty},$ then \quad
$\|[f'(s|D|),\delta(a)]\|_1=O(s^{1-p})$ as $s\to0$.
\item If $D^{-p}\in\mathcal{M}_{1,\infty}$ then (for every $\varepsilon>0$)\quad
$\|[f'(s|D|),\delta(a)]\|_1=O(s^{1-p-\varepsilon})$, as $s\to0$.
\end{enumerate}
\end{lem}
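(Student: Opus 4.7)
The plan is to mimic the Fourier approach of Lemma \ref{first commutator lemma}, but upgrade the resulting operator-norm bound to a trace-norm bound. I will factor $f'=\phi\psi$ as a product of two Schwartz functions so that, via the Leibniz rule,
\[
[f'(s|D|),\delta(a)]=[\phi(s|D|),\delta(a)]\,\psi(s|D|)+\phi(s|D|)\,[\psi(s|D|),\delta(a)],
\]
each summand is a commutator (contributing a factor of $s$) times a trace-class operator (contributing the heat-kernel scaling). Since $D$ is invertible we have $|D|\ge c>0$, so only the values of $\phi,\psi$ on $[sc,\infty)$ are relevant and we may freely take them to be Schwartz on all of $\mathbb{R}$.

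Concretely, set $\psi(t)=e^{-t^{p+1}/2}$ and $\phi(t)=-(p+1)t^{p}e^{-t^{p+1}/2}$ on $(0,\infty)$, so $f'=\phi\psi$ on the spectrum of $s|D|$. Both functions are Schwartz, hence $\widehat{\phi''},\widehat{\psi''}\in L_{1}(\mathbb{R})$, and the Fourier computation in the proof of Lemma \ref{first commutator lemma}, applied with $f$ replaced by $\phi$ or $\psi$ and with $a$ replaced by $\delta(a)$ (legitimate since $\delta^{3}(a)\in\mathcal{L}(H)$ by the $QC^{\infty}$ hypothesis), yields
\[
\|[\phi(s|D|),\delta(a)]\|_{\infty}=O(s),\qquad \|[\psi(s|D|),\delta(a)]\|_{\infty}=O(s),\qquad s\to 0.
\]

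For the trace-norm factors I appeal to the eigenvalue comparison argument of Lemma \ref{eddl}. The bound $\|\psi(s|D|)\|_{1}={\rm Tr}(e^{-(s|D|)^{p+1}/2})$ is exactly that Lemma after rescaling $s$ by $2^{-1/(p+1)}$, yielding $O(s^{-p})$ in case (a) and $O(s^{-p-\varepsilon})$ in case (b). The analogous bound $\|\phi(s|D|)\|_{1}=O(s^{-p})$, respectively $O(s^{-p-\varepsilon})$, is obtained by the same comparison: the extra weight $t^{p}$ in $\phi$ contributes a factor of $s^{p}$ which, after the substitution $u=s^{p+1}k^{(p+1)/p}$ (or its $(p+\varepsilon)$-analogue in case (b)), is exactly cancelled and the heat-kernel scaling is recovered. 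Combining the two Leibniz summands via the H\"older inequality $\|AB\|_{1}\le\|A\|_{\infty}\|B\|_{1}$ gives the stated estimates at once.

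The main technical step is the trace-norm bound on $\phi(s|D|)$: it is not directly a heat kernel, and one must verify that the additional polynomial weight $t^{p}$ does not worsen the scaling enjoyed by $\psi(s|D|)$. This is a routine integral comparison of the type already performed in Lemma \ref{eddl}, but it is the only point at which the argument departs from a straightforward citation of earlier results.
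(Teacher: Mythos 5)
Your proof is correct, and it takes a genuinely different route from the paper's. The paper factors $f'(t)=-\operatorname{sgn}(t)\,h(t)^2$ with $h=\sqrt{|f'|}$, applies the Leibniz rule to $[h^2(s|D|),\delta(a)]$, and then needs $\widehat{h'}\in L_1$, which it obtains from Lemma 7 of \cite{PS}; since $h(t)\sim|t|^{p/2}$ near $0$, this fails when $p=1$ or $p=3$, forcing a separate ad hoc computation for those two values. Your asymmetric factorisation $f'=\phi\psi$, in which $\phi$ absorbs the polynomial weight $-(p+1)t^p$ and $\psi$ is the pure stretched Gaussian $e^{-t^{p+1}/2}$, eliminates the root singularity altogether: both $\phi$ and $\psi$ are $C^\infty$ on $[0,\infty)$ with rapidly decaying derivatives for every $p\geq1$, so they admit Schwartz extensions to $\mathbb{R}$ (for $p$ odd the given formulas already are Schwartz; for $p$ even one extends across $0$, which is harmless since $D$ is invertible and only values on $\operatorname{spec}(s|D|)\subset(0,\infty)$ enter the functional calculus and the trace norms). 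The rest of the argument is the same combination of the Fourier commutator bound from Lemma \ref{first commutator lemma} (applied to $\phi,\psi$ and to $\delta(a)$, which needs $\delta^2(a),\delta^3(a)$ bounded, available by $QC^\infty$) with the heat-kernel trace bound of Lemma \ref{eddl} and H\"older's inequality. The payoff is a single, case-free proof where the paper has a dichotomy. Two small polish points: strictly one should say that $\phi,\psi$ \emph{extend} to Schwartz functions rather than that they \emph{are} Schwartz (they were defined only on $(0,\infty)$); and the bound $\|\phi(s|D|)\|_1=O(s^{-p})$ is obtained most transparently by noting $|\phi(t)|\leq C\,f(t/2)$ for all $t\geq0$ and invoking Lemma \ref{eddl} directly — the same device the paper uses for $h$ — rather than re-running the integral substitution.
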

\begin{proof} Suppose first that $p\geq 4$ or that $p=2.$ Define a positive function $h$ by setting $f'(t)=-{\rm sgn}(t)h^2(t)$ for all $t.$ We have $h',h''\in L_2(-\infty,\infty).$ It follows now from Lemma 7 in \cite{PS} that $\hat{h'}\in L_1(-\infty,\infty).$ Repeating the argument in the beginning of Lemma \ref{first commutator lemma}, we obtain
$$[h(s|D|),\delta(a)]=s\int_{-\infty}^{\infty}\int_0^1\hat{h'}(u)e^{iuvs|D|}\delta^2(a)e^{iu(1-v)s|D|}dvdu$$
and, therefore,
$\|[h(s|D|),\delta(a)]\|_{\infty}\leq s\|\hat{h'}\|_1\|\delta^2(a)\|_{\infty}.$
On the other hand, we have
$$[f'(s|D|,\delta(a)]=[h^2(s|D|),\delta(a)]=h(s|D|)[h(s|D|),\delta(a)]+[h(s|D|),\delta(a)]h(s|D|).$$
Therefore,
$$\|[f'(s|D|),\delta(a)]\|_1\leq 2\|h(s|D|)\|_1\|[h(s|D|),\delta(a)]\|_{\infty}=\|h(s|D|)\|_1\cdot O(s).$$

Recall that $h(s)\leq{\rm const}\cdot f(s/2)$ for all $s\in\mathbb{R}.$ If $D^{-p}\in\mathcal{L}_{1,\infty},$ then it follows from Lemma \ref{eddl} \eqref{eddla} that $\|h(s|D|)\|_1=O(s^{-p}).$ Similarly, if $D^{-p}\in\mathcal{M}_{1,\infty},$ then it follows from Lemma \ref{eddl} \eqref{eddlb} that $\|h(s|D|)\|_1=O(s^{-p-\varepsilon}).$ This proves the assertion for $p\geq 4$ or $p=2.$

If $p=1$ or $p=3$, then Lemma 7 in \cite{PS} is inapplicable and 
we have to proceed with a direct computation. Assume, for simplicity, 
that $p=1$ and $D^{-1}\in\mathcal{L}_{1,\infty}$ (the proof is similar for 
$p=3$ and for $\mathcal{M}_{1,\infty}$). Repeating the argument above, we obtain
$$
\|[f^{1/2}(s|D|),\delta(a)]\|_1=O(s),\quad \|[f^{1/2}(s|D|),\delta^2(a)]\|_1=O(s).
$$
Using the elementary equality
\begin{align*}
-\frac12[f'(s|D|),\delta(a)]&=\delta^2(a)\cdot sf(s|D|)+s|D|f^{1/2}(s|D|)\cdot[f^{1/2}(s|D|),\delta(a)]\\
&+[f^{1/2}(s|D|),\delta(a)]\cdot s|D|f^{1/2}(s|D|)+[f^{1/2}(s|D|),\delta^2(a)]\cdot sf^{1/2}(s|D|),
\end{align*}
we infer that
$$
\|[f'(s|D|),\delta(a)]\|_1\leq{\rm const}\cdot{\rm Tr}(sf(s|D|)+2s^2|D|f^{1/2}(s|D|)+s^2f^{1/2}(s|D|)).
$$

Recall that $sf(s),f^{1/2}(s)\leq{\rm const}\cdot f(s/2)$ for all $s>0.$ By Lemma \ref{eddl} \eqref{eddla}, we have
$$
s{\rm Tr}(f(s|D|))=O(1),\quad s{\rm Tr}(s|D|f^{1/2}(s|D|))=O(1),\quad s{\rm Tr}(f^{1/2}(s|D|))=O(1).
$$
This proves the assertion for $p=1.$
\end{proof}

\begin{lem}\label{second commutator lemma} 
Let $(\mathcal{A},H,D)$ be a $QC^{\infty}$ spectral triple and let $a\in\mathcal{A}.$
\begin{enumerate}[{\rm (a)}]
\item\label{seccoma} If $D^{-p}\in\mathcal{L}_{1,\infty},$ then \quad
$\|[f(s|D|),a]-s\delta(a)f'(s|D|)\|_1=O(s^{2-p})$, as $s\to0.$
\item\label{seccomb} If $D^{-p}\in\mathcal{M}_{1,\infty},$ then (for every $\varepsilon>0$)\quad
$\|[f(s|D|),a]-s\delta(a)f'(s|D|)\|_1=O(s^{2-p-\varepsilon})$, as $s\to0$.
\end{enumerate}
\end{lem}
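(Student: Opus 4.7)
\emph{Proof sketch.} My plan is to exploit the factorization $f = g^2$ with $g(t) = e^{-|t|^{p+1}/2}$, which upgrades the operator norm estimate of Lemma~\ref{first commutator lemma} to a trace norm estimate by absorbing a trace class factor on the outside. Since $g$ is an exponential of a negative polynomial of exactly the same form as $f$, Lemma~\ref{first commutator lemma}, Lemma~\ref{gr antik} and Lemma~\ref{eddl} all apply verbatim with $f$ replaced by $g$: in particular $\widehat{g''} \in L_1(\mathbb{R})$ by the same reference to Lemma~7 of \cite{PS}, and $\|g(s|D|)\|_1 = O(s^{-p})$ under hypothesis \eqref{seccoma}, respectively $O(s^{-p-\varepsilon})$ under hypothesis \eqref{seccomb}.

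I would first apply both inequalities of Lemma~\ref{first commutator lemma} to $g$ in place of $f$, producing
\begin{align*}
[g(s|D|),a] &= s\,g'(s|D|)\,\delta(a) + E_1(s),\\
[g(s|D|),a] &= s\,\delta(a)\,g'(s|D|) + E_2(s),
\end{align*}
with $\|E_1(s)\|_\infty, \|E_2(s)\|_\infty = O(s^2)$. Then the Leibniz rule on $f(s|D|) = g(s|D|)^2$, together with the functional calculus identity $2 g g' = f'$, gives
\begin{align*}
[f(s|D|),a] &= g(s|D|)\,[g(s|D|),a] + [g(s|D|),a]\,g(s|D|)\\
&= \tfrac{s}{2}\,f'(s|D|)\,\delta(a) + \tfrac{s}{2}\,\delta(a)\,f'(s|D|) + g(s|D|)\,E_1(s) + E_2(s)\,g(s|D|).
\end{align*}
Subtracting $s\,\delta(a)\,f'(s|D|)$ reduces the lemma to the key identity
\begin{equation*}
[f(s|D|),a] - s\,\delta(a)\,f'(s|D|) = \tfrac{s}{2}\,[f'(s|D|),\delta(a)] + g(s|D|)\,E_1(s) + E_2(s)\,g(s|D|).
\end{equation*}

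Finally I would estimate each of the three terms in trace norm. For the commutator term, Lemma~\ref{gr antik} directly yields $\|[f'(s|D|),\delta(a)]\|_1 = O(s^{1-p})$ in case \eqref{seccoma}, respectively $O(s^{1-p-\varepsilon})$ in case \eqref{seccomb}, so this contribution is $O(s^{2-p})$, respectively $O(s^{2-p-\varepsilon})$. For the two error terms, H\"older's inequality $\|XY\|_1 \leq \|X\|_1\,\|Y\|_\infty$ combined with the trace class bound on $g(s|D|)$ gives
\begin{equation*}
\|g(s|D|)\,E_i(s)\|_1 \leq \|g(s|D|)\|_1\,\|E_i(s)\|_\infty = O(s^{-p}) \cdot O(s^2) = O(s^{2-p}),
\end{equation*}
and analogously $O(s^{2-p-\varepsilon})$ in case \eqref{seccomb}. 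The main technical input is already packaged in Lemma~\ref{gr antik}; what the present lemma really adds is the observation that the $f = g^2$ factorization converts the $O(s^2)$ operator norm error from Lemma~\ref{first commutator lemma} into a trace norm error of the correct order $O(s^{2-p})$, because the trace class factor $g(s|D|)$ can be placed on the outside at exactly the $O(s^{-p})$ cost allowed by Lemma~\ref{eddl}.
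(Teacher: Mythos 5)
Your proposal is correct and follows essentially the same approach as the paper's proof: factorize $f=g^2$ (the paper calls it $h$), apply Lemma~\ref{first commutator lemma} to $g$ to get operator-norm $O(s^2)$ error terms, absorb the trace-class factor $g(s|D|)$ via Lemma~\ref{eddl} to get $O(s^{2-p})$ in trace norm, and then peel off the remaining $\tfrac{s}{2}[f'(s|D|),\delta(a)]$ term using Lemma~\ref{gr antik}. The only difference is presentational: you derive the key identity via the Leibniz rule and $2gg'=f'$ step by step, while the paper writes the identity for $[f(s|D|),a]-\tfrac{s}{2}\{f'(s|D|),\delta(a)\}$ directly and then invokes Lemma~\ref{gr antik} to pass from the anticommutator form to $s\,\delta(a)f'(s|D|)$.
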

\begin{proof} Let $f=h^2.$ Since $h$ can be obtained from $f$ by rescaling, the assertion of Lemma \ref{first commutator lemma} also holds for $h.$ We have
$$[f(s|D|,a)]-\frac{s}2\{f'(s|D|),\delta(a)\}=h(s|D|)\Big([h(s|D|),a]-sh'(s|D|)\delta(a)\Big)$$
$$\quad\quad\quad\quad\quad\quad\quad\quad\quad\quad\quad\quad\quad\quad+\Big([h(s|D|),a]-s\delta(a)h'(s|D|)\Big)h(s|D|).$$
It follows that
$$\|[f(s|D|,a)]-\frac{s}2\{f'(s|D|),\delta(a)\}\|_1$$
$$\leq\|h(s|D|)\|_1\Big(\|[h(s|D|),a]-sh'(s|D|)\delta(a)\|_{\infty}+\|[h(s|D|),a]-s\delta(a)h'(s|D|)\|_{\infty}\Big).$$
We infer from Lemma \ref{first commutator lemma} that the expression in brackets is $O(s^2).$
If $D^{-p}\in\mathcal{L}_{1,\infty},$ then it follows from Lemma \ref{eddl} \eqref{eddla} that $\|h(s|D|)\|_1=O(s^{-p}).$ Therefore,
$$\|[f(s|D|,a)]-\frac{s}2\{f'(s|D|),\delta(a)\}\|_1=O(s^{2-p}).$$
The assertion \eqref{seccoma} follows now from Lemma \ref{gr antik}. Similarly, if $D^{-p}\in\mathcal{M}_{1,\infty},$ then it follows from Lemma \ref{eddl} \eqref{eddlb} that $\|h(s|D|)\|_1=O(s^{-p-\varepsilon}).$ This proves the assertion \eqref{seccomb}.
\end{proof}

\begin{prop}\label{commutator estimate} 
Let $(\mathcal{A},H,D)$ be a $QC^{\infty}$ spectral triple and let $a\in\mathcal{A}.$
\begin{enumerate}[{\rm (a)}]
\item If $D^{-p}\in\mathcal{L}_{1,\infty},$ then \quad
$\|[f(s|D|),a]-s\delta(a)f'(s|D|)\|_{p,1}=O(s)$, as $s\to0.$
\item If $D^{-p}\in\mathcal{M}_{1,\infty},$ then (for every $\varepsilon>0$) \quad
$\|[f(s|D|),a]-s\delta(a)f'(s|D|)\|_{p-\varepsilon}=O(s^{1-2\varepsilon})$, as $s\to0.$
\end{enumerate}
\end{prop}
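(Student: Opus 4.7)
Set $X_s := [f(s|D|),a] - s\delta(a)f'(s|D|)$. The strategy is a Hölder-style interpolation between the two bounds already in hand: Lemma~\ref{first commutator lemma} gives the uniform estimate $\|X_s\|_\infty = O(s^2)$, while Lemma~\ref{second commutator lemma} gives the trace-class estimate $\|X_s\|_1 = O(s^{2-p})$ in case (a) and $\|X_s\|_1 = O(s^{2-p-\varepsilon'})$ for every $\varepsilon' > 0$ in case (b). Since $\mu(k,T) \leq \min(\|T\|_\infty, \|T\|_1/(k+1))$ for any compact $T$, these combine to give
$$\mu(k, X_s) \leq C\min\bigl(s^2,\, s^{2-p}/(k+1)\bigr),$$
with the two pieces crossing over at $k_0 \sim s^{-p}$ (respectively $k_0 \sim s^{-p-\varepsilon'}$).

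For part (a), I substitute this into the Lorentz-norm formula $\|X_s\|_{p,1} = \sum_{k \geq 0}(k+1)^{1/p - 1}\mu(k, X_s)$ and split the sum at $k_0$. For $k < k_0$ the uniform bound yields $O(s^2 \cdot k_0^{1/p}) = O(s)$; for $k \geq k_0$ and $p > 1$ the tail $\sum_{k \geq k_0}(k+1)^{1/p - 2} = O(k_0^{1/p - 1})$ gives $O(s^{2-p} \cdot s^{p-1}) = O(s)$. The borderline case $p = 1$ is immediate from $\|\cdot\|_{1,1} = \|\cdot\|_1$ and Lemma~\ref{second commutator lemma}. Equivalently, the split computation amounts to the standard inequality $\|T\|_{p,1} \lesssim \|T\|_\infty^{1-1/p}\|T\|_1^{1/p}$ applied to $X_s$.

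For part (b), I apply the analogous Hölder interpolation $\|T\|_q \leq \|T\|_\infty^{1 - 1/q}\|T\|_1^{1/q}$ with $q = p - \varepsilon$, together with Lemma~\ref{second commutator lemma} for an auxiliary parameter $\varepsilon'$, to get
$$\|X_s\|_{p-\varepsilon} \leq C s^{(p - 2\varepsilon - \varepsilon')/(p - \varepsilon)}.$$
The remaining task is to choose $\varepsilon'$ small enough, in terms of $\varepsilon$ and $p$, so that the exponent exceeds $1 - 2\varepsilon$; elementary algebra shows that any $\varepsilon' \leq \varepsilon(2p - 1) - 2\varepsilon^2$ works, and this is positive for all sufficiently small $\varepsilon > 0$ once $p \geq 1$.

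The main obstacle is the exponent bookkeeping in part (b): one must carefully absorb the quadratic $\varepsilon^2$-corrections, and (when $p - \varepsilon < 1$) verify that the Hölder interpolation still holds for the Schatten quasi-norm up to a constant. Once these technicalities are settled, the estimates drop out of the two lemmas with no further work.
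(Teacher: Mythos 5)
Your proposal is correct and matches the paper's approach: both arguments interpolate the operator-norm bound of Lemma \ref{first commutator lemma} against the trace-norm bound of Lemma \ref{second commutator lemma} via the inequality $\|T\|_{p,1}\leq\|T\|_1^{1/p}\|T\|_\infty^{1-1/p}$ (and its Schatten analogue for part (b)). The paper simply cites this interpolation inequality from Lindenstrauss--Tzafriri, whereas you re-derive it from the singular-value bound $\mu(k,T)\leq\min(\|T\|_\infty,\|T\|_1/(k+1))$ and carry out the part-(b) $\varepsilon$-bookkeeping (which the paper elides with ``the proof is identical'') more explicitly.
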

\begin{proof} We prove only the first assertion, as the proof of the second one is identical. 
If $p=1,$ then the assertion is proved in Lemma \ref{second commutator lemma}. 
Suppose $p>1$ and set
$$
T=[f(s|D|),a]-s\delta(a)f'(s|D|).
$$
We infer from Lemma \ref{first commutator lemma}  
that $\|T\|_{\infty}=O(s^2)$ and from Lemma \ref{second commutator lemma} that 
$\|T\|_1=O(s^{2-p})$ as $s\to0.$ The assertion follows from the 
interpolation inequality (see e.g. Theorem 2.g.18 and Corollary 2.g.14 in \cite{LT2})
$$
\|T\|_{p,1}\leq\|T\|_1^{1/p}\|T\|_{\infty}^{1-1/p}=O(s^{(2-p)/p}\cdot s^{2(1-1/p)})=O(s).
$$
\end{proof}

\subsection{Asymptotics for the heat semigroup and the 
proof of Proposition \ref{second cycle lemma}}\label{heat}

In order to study the operator $\mathcal{W}_p(c)D^{-1}$, which was introduced 
in Proposition \ref{reduction}, we now establish the following heat semigroup estimate.

\begin{prop}\label{second cycle lemma} Let $(\mathcal{A},H,D)$ be a $QC^{\infty}$ 
spectral triple with $D^{-p}\in\mathcal{M}_{1,\infty}.$ If the spectral triple and the 
integer $p$ are both odd (respectively, even), then
$$
{\rm Tr}(\mathcal{W}_p(c)D^{-1}e^{-(s|D|)^{p+1}})
=Ch(c)\log\left(1/s\right)+O(1),\quad s\to0,
$$
for every Hochschild cycle $c\in\mathcal{A}^{\otimes (p+1)}.$
\end{prop}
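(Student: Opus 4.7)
The plan starts from the algebraic identity
$$
\delta(a_p)D^{-1} = [F, a_p] + F[D, a_p]D^{-1},
$$
which follows directly from $D = F|D|$ and $F^2 = 1$. This yields the decomposition
$$
\mathcal{W}_p(c)D^{-1} = \mathcal{W}_\varnothing(c) + T(c), \qquad T(c) := \Gamma a_0\prod_{k=1}^{p-1}[F, a_k]\cdot F[D, a_p]D^{-1},
$$
and I will show that ${\rm Tr}(\mathcal{W}_\varnothing(c)f(s|D|)) = O(1)$ while ${\rm Tr}(T(c)f(s|D|)) = Ch(c)\log(1/s) + O(1)$, whose sum gives the claim.

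For the first piece, I invoke Equation \eqref{eq:bob2} from the proof of Lemma \ref{kogom5}, namely $2\mathcal{W}_\varnothing(c) = [F, F\mathcal{W}_\varnothing(c)] + (-1)^{p-1}ch(c)$. Since $F$ commutes with $f(s|D|)$, cyclicity of the trace annihilates ${\rm Tr}([F, F\mathcal{W}_\varnothing(c)]f(s|D|))$, and hence
$$
{\rm Tr}(\mathcal{W}_\varnothing(c)f(s|D|)) = \tfrac{(-1)^{p-1}}{2}{\rm Tr}(ch(c)f(s|D|)).
$$
Because $ch(c) \in \mathcal{L}_{p/(p+1),\infty} \subset \mathcal{L}_1$ by Proposition \ref{f der def} and the H\"older property \eqref{lpi mult}, trace-norm continuity gives ${\rm Tr}(ch(c)f(s|D|)) \to {\rm Tr}(ch(c)) = 2\,Ch(c)$ as $s \to 0$, so this summand is $(-1)^{p-1}Ch(c) + o(1) = O(1)$.

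For the second piece, which carries the $\log(1/s)$ divergence, I plan to iteratively commute $f(s|D|)$ through the factors of $T(c)$. By Proposition \ref{commutator estimate}, each residue $[f(s|D|), a_j] - s\delta(a_j)f'(s|D|)$ has $(p,1)$-norm $O(s)$ (with an extra $\varepsilon$-loss in the $\mathcal{M}_{1,\infty}$ case). Combined with the H\"older inequality \eqref{lpi mult} and the heat-kernel estimates of Lemma \ref{eddl}, each residue contributes only $O(1)$ to the trace. The Hochschild cycle condition $bc = 0$ is then invoked, in the same spirit as Lemmas \ref{kogom2}--\ref{kogom3}, to cancel the Hochschild coboundary contributions arising in the expansion. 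What survives is a scalar $Ch(c)$ multiplying the divergent integral ${\rm Tr}(|D|^{-p}f(s|D|))/p \sim \log(1/s)$, yielding the claimed asymptotic.

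The main obstacle will be the combinatorial bookkeeping in this reduction step: tracking the parity-dependent signs arising from $F\Gamma = -\Gamma F$ (for even $p$) and $F[F, a] = -[F, a]F$, and verifying that every error term in the expansion is genuinely $O(1)$ under the trace. The $\mathcal{M}_{1,\infty}^{(p)}$ case is particularly delicate, since the $\varepsilon$-perturbed estimates of Proposition \ref{commutator estimate}(b) and Lemma \ref{eddl}\eqref{eddlb} must be balanced so that the $s^{-\varepsilon}$ factors are absorbed into the $O(1)$ remainder rather than escaping into the leading term.
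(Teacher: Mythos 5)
Your opening steps --- the identity $\delta(a_p)D^{-1}=[F,a_p]+F[D,a_p]D^{-1}$, the decomposition $\mathcal{W}_p(c)D^{-1}=\mathcal{W}_\varnothing(c)+T(c)$, and the argument that ${\rm Tr}(\mathcal{W}_\varnothing(c)f(s|D|))=O(1)$ --- are correct, but they only relocate the divergence into $T(c)$; all of the content of the proposition is deferred to the unproved claim that ${\rm Tr}(T(c)f(s|D|))=Ch(c)\log(1/s)+O(1)$. The sketch you give for that claim cannot close as written, because its endpoint is the assertion that what survives is $Ch(c)\,{\rm Tr}(|D|^{-p}f(s|D|))/p\sim Ch(c)\log(1/s)$. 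For generic $D^{-p}\in\mathcal{M}_{1,\infty}$ (or even $\mathcal{L}_{1,\infty}$), ${\rm Tr}(|D|^{-p}f(s|D|))$ is merely $O(\log(1/s))$ and can oscillate: Lemma \ref{eddl} supplies only $O$-bounds, not asymptotics, and nothing in the commutator estimates you cite can suppress that oscillation. Indeed if ${\rm Tr}(|D|^{-p}f(s|D|))$ always had a clean $\log$-asymptotic, $|D|^{-p}$ would itself be Dixmier-measurable --- precisely what cannot be assumed, and the whole point of Theorem \ref{main result} is that measurability is forced by the cycle structure even when $|D|^{-p}$ alone is not measurable.

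The device that actually produces the precise $\log(1/s)$, and which is missing from your outline, is a differentiate-then-integrate step. The paper first proves (Lemma \ref{first cycle lemma}) a \emph{bounded} statement, $s\,{\rm Tr}(\mathcal{W}_p(c)Ff'(s|D|))=-Ch(c)+O(s^{1-\varepsilon})$, where the constant comes from ${\rm Tr}(ch(c)f(s|D|))\to{\rm Tr}(ch(c))$ (essentially your first-piece observation), the Hochschild vanishing of ${\rm Tr}(\mathcal{K}_s(c))$, and Proposition \ref{commutator estimate} replacing $[f(s|D|),a_p]$ by $s\delta(a_p)f'(s|D|)$ up to small error. Since $sf'(s|D|)=-(p+1)s^{p+1}|D|^p e^{-(s|D|)^{p+1}}$, the substitution $u=s^{p+1}$ yields ${\rm Tr}(\mathcal{W}_p(c)F|D|^pe^{-u|D|^{p+1}})=\frac{Ch(c)}{(p+1)u}+O(u^{-(p+\varepsilon)/(p+1)})$, and integrating over $u\in[s,1]$ --- noting $|D|^pe^{-u|D|^{p+1}}$ is the exact $u$-derivative of $-|D|^{-1}e^{-u|D|^{p+1}}$ --- gives ${\rm Tr}(\mathcal{W}_p(c)D^{-1}e^{-s|D|^{p+1}})=\frac{Ch(c)}{p+1}\log(1/s)+O(1)$. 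The $\log$ arises from $\int_s^1 du/u$, not from any asymptotic of the heat trace of $|D|^{-p}$. Your plan of iteratively commuting $f(s|D|)$ through $T(c)$ never passes to $f'$ and never integrates, so it has no mechanism to manufacture the logarithm; that step must be inserted before the argument can be completed.
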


In 
Lemma \ref{f second est} and Lemma \ref{first cycle lemma}, 
we prepare the ground for the proof of Proposition \ref{second cycle lemma}.

\begin{lem}\label{f second est} If $(\mathcal{A},H,D)$ is a $QC^{\infty}$ spectral triple, then
$$
\left(\prod_{k=0}^m[F,a_k]\right)|D|^{m+1}\in\mathcal{L}(H),\quad a_k\in\mathcal{A},\quad 0\leq k\leq m.
$$
\end{lem}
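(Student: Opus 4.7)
The plan is to argue by induction on $m$, building on the observation that $F$ and $|D|$ commute (since $D=F|D|=|D|F$), which already handles the case $m=0$:
\[
[F,a_0]|D|=F(|D|a_0-\delta(a_0))-a_0D=[D,a_0]-F\delta(a_0),
\]
bounded because $[D,a_0]$ is bounded by the spectral triple axiom and $\delta(a_0)$ is bounded by $QC^{\infty}$. The same computation shows that $[F,b]|D|$ is bounded for any $b$ in the smooth subalgebra generated by $\mathcal{A}$ (which contains all $\delta^{n}(a)$), since $QC^{\infty}$ guarantees that $\delta^{n+1}(a)$ and $\delta^{n}([D,a])=[D,\delta^{n}(a)]$ are bounded (noting that $\delta$ and $[D,\cdot]$ commute because $[|D|,D]=0$).

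The key trick for the inductive step is to exploit that each factor $U_k:=[F,a_k]|D|$ is bounded, so the product $U_0U_1\cdots U_m$ is bounded. I would then compare this product with $\prod_{k=0}^m[F,a_k]\cdot|D|^{m+1}$ by using the commutation rule
\[
|D|[F,b]=[F,b]|D|+[F,\delta(b)],
\]
which follows from $[|D|,F]=0$. Iterating this identity to push all $m$ interior copies of $|D|$ to the right of the product $U_0U_1\cdots U_m$, one obtains
\[
U_0U_1\cdots U_m=\prod_{k=0}^m[F,a_k]\cdot|D|^{m+1}+\sum_{\text{corrections}},
\]
in which every correction is (up to sign and multiplicity) of the form $\prod_{k=0}^m[F,\delta^{n_k}(a_k)]\cdot|D|^{m+1-N}$, with $n_k\ge0$ integers and $N=\sum_k n_k\ge 1$.

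To close the induction, I split each correction as a prefix $\prod_{k=0}^{N-1}[F,\delta^{n_k}(a_k)]$, which is a product of $N$ bounded operators and hence bounded, times the tail
\[
\prod_{k=N}^{m}[F,\delta^{n_k}(a_k)]\,|D|^{m+1-N},
\]
which, after reindexing, is precisely the statement of the lemma at level $m-N<m$ applied to the smooth operators $\delta^{n_N}(a_N),\ldots,\delta^{n_m}(a_m)$. Since $QC^{\infty}$ is stable under $\delta$, the inductive hypothesis applies and each tail is bounded; hence so is each correction. Rearranging yields
\[
\prod_{k=0}^m[F,a_k]\,|D|^{m+1}=U_0U_1\cdots U_m-\sum_{\text{corrections}}\in\mathcal{L}(H).
\]
The main obstacle here is purely bookkeeping: one must make the combinatorial expansion of $U_0\cdots U_m$ explicit enough to verify that each resulting term factors into a bounded prefix and a tail of exactly the right shape (same number of $|D|$'s as $[F,\cdot]$ factors) to invoke the inductive hypothesis. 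Once that combinatorial step is carried out carefully, the rest is straightforward.
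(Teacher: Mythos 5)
Your proof is correct, and it is a genuinely different (though related) organization of the argument from the paper's. Both proofs hinge on the same base fact---that $[F,a]|D|=[D,a]-F\delta(a)$ is bounded, equivalently $[F,a]=A\,|D|^{-1}$ with $A=[D,a]-F\delta(a)$ bounded and $\delta$-smooth---but they diverge in how they handle the power $|D|^{m+1}$. The paper introduces the algebra
$$\mathcal{B}=\{A\in\mathcal{L}(H):\ A:{\rm dom}(D)\to{\rm dom}(D),\ \delta^n(A)\in\mathcal{L}(H)\ \text{for all } n\geq0\},$$
proves once and for all the conjugation lemma ``for $A\in\mathcal{B}$, $A|D|^n=|D|^nB$ with $B\in\mathcal{B}$,'' writes $[F,a_m]=A_m|D|^{-1}$ with $A_m\in\mathcal{B}$, and then peels off the rightmost factor: $\prod_{k=0}^m[F,a_k]|D|^{m+1}=\bigl(\prod_{k=0}^{m-1}[F,a_k]|D|^m\bigr)B_m$. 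All combinatorics are absorbed into the conjugation lemma, so the induction is on the lemma as literally stated (with $a_k\in\mathcal{A}$), one factor shaved off per step. Your approach instead compares the bounded product $U_0\cdots U_m$ with $\prod[F,a_k]|D|^{m+1}$ by iterating the Leibniz-type identity $[|D|,[F,b]]=[F,\delta(b)]$, producing correction terms $\prod_{k}[F,\delta^{n_k}(a_k)]|D|^{m+1-N}$, $N=\sum n_k\geq1$, which you factor into a bounded prefix times a tail of the same shape at a strictly smaller level. This is sound, but---as you rightly flag---it forces you to strengthen the inductive statement: the $a_k$'s in the tail are now of the form $\delta^{n}(a)$, so the lemma must be proved for all $b_k$ in the $\delta$-stable class of elements with $\delta^n(b)$ and $\delta^n(\partial(b))$ bounded for all $n$. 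Your remark that $\delta$ and $[D,\cdot]$ commute (since $[|D|,D]=0$) is exactly what makes this class closed under $\delta$, so the strengthened induction does close; and when $N=m+1$ the tail is empty and the correction is already bounded, so no negative powers of $|D|$ appear. In short, your route trades the paper's compact conjugation lemma (which avoids strengthening the hypothesis) for an explicit combinatorial expansion plus a mildly strengthened induction; both yield the result.
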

\begin{proof}Define the algebra 
$\mathcal{B}=\{A\in\mathcal{L}(H):\ A:{\rm dom}(D)\to{\rm dom}(D),
\ \delta^n(A)\in\mathcal{L}(H)\mbox{ for all }n\geq0\}.$
An inductive argument shows that, for every $A\in\mathcal{B}$ and 
for every $n\geq0,$ there exists $B\in\mathcal{B}$ such that $A|D|^n=|D|^nB.$
For all $k\leq m$ and for all $a_k\in\mathcal{A},$ we have $[D,a_k]\in\mathcal{B}$ 
and $F[|D|,a_k]\in\mathcal{B}$ (here, we used the fact that our spectral triple is 
$QC^{\infty}$). Therefore,
$$
[F,a_k]=[D,a_k]|D|^{-1}-F[|D|,a_k]|D|^{-1}=A_k|D|^{-1},
$$
where $A_k\in\mathcal{B}.$ Therefore,
$$
\prod_{k=0}^m[F,a_k]|D|^{m+1}=\left(\prod_{k=0}^{m-1}[F,a_k]\right)A_m|D|^m.
$$
Note that $A_m|D|^{-1}\cdot|D|^{m+1}=|D|^mB_m$ for some $B_m\in\mathcal{B}.$ It follows that
$$
\prod_{k=0}^m[F,a_k]|D|^{m+1}=\left(\prod_{k=0}^{m-1}[F,a_k]\right)|D|^mB_m.
$$
The right hand side is bounded by induction.
\end{proof}

Note that the condition $D^{-p}\in\mathcal{M}_{1,\infty}$ guarantees that $D^{-p-2}\in\mathcal{L}_1.$ Hence,
$$0\leq -f'(s|D|)\leq\frac{4(p+1)}{e}(s|D|)^{-p-2}\in\mathcal{L}_1.$$
In particular, we have $f'(s|D|)\in\mathcal{L}_1.$

\begin{lem}\label{first cycle lemma} Let $(\mathcal{A},H,D)$ be a $QC^{\infty}$ spectral triple and let $c\in\mathcal{A}^{\otimes (p+1)}$ be a Hochschild cycle. Suppose that the spectral triple and $p$ are both odd (respectively, even).
\begin{enumerate}[{\rm (a)}]
\item If $D^{-p}\in\mathcal{L}_{1,\infty},$ then \quad
$
s{\rm Tr}(\mathcal{W}_p(c)Ff'(s|D|))=-Ch(c)+O(s)$, as $s\to0.
$
\item If $D^{-p}\in\mathcal{M}_{1,\infty},$ then (for every $\varepsilon>0$) \quad
$
s{\rm Tr}(\mathcal{W}_p(c)Ff'(s|D|))=-Ch(c)+O(s^{1-\varepsilon})$, as $s\to0.
$
\end{enumerate}
Here, $f(s)=e^{-|s|^{p+1}},$ $s\in\mathbb{R}.$
\end{lem}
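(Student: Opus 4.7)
The strategy is to replace $s\delta(a_p)f'(s|D|)$ by a commutator with $f(s|D|)$ using Proposition \ref{commutator estimate}, rearrange cyclically, and then identify the limit via an algebraic identity derived from the Hochschild cycle condition in the spirit of Equation \eqref{eq:bob1}.

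Factor $\mathcal{W}_p(c) = T\delta(a_p)$ with $T := \Gamma a_0 \prod_{k=1}^{p-1}[F,a_k]$. By Proposition \ref{f der def} and the H\"older property \eqref{lpi mult}, $T\in\mathcal{L}_{p/(p-1),\infty}$ when $p>1$, and $T\in\mathcal{L}(H)$ when $p=1$. Since $F$ commutes with $|D|$ and hence with $f'(s|D|)$, Proposition \ref{commutator estimate} gives $s\delta(a_p)f'(s|D|) = [f(s|D|), a_p] + R$ with $\|R\|_{p,1}=O(s)$ in the $\mathcal{L}_{1,\infty}$ case (respectively $\|R\|_{p-\varepsilon}=O(s^{1-2\varepsilon})$ in the $\mathcal{M}_{1,\infty}^{(p)}$ case). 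H\"older in dual symmetric ideals gives $|\mathrm{Tr}(TRF)| = O(s)$ (resp.\ $O(s^{1-\varepsilon})$), yielding
\[
s\mathrm{Tr}(\mathcal{W}_p(c) F f'(s|D|)) = \mathrm{Tr}(T[f(s|D|),a_p]F) + O(s).
\]
Cyclicity of the trace (valid since $f(s|D|)\in\mathcal{L}_1$ makes all products trace class) together with $[F,f(s|D|)] = 0$ rewrites the main term as $-\mathrm{Tr}([FT,a_p]f(s|D|))$.

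The remaining task is to show $\mathrm{Tr}([FT,a_p]f(s|D|)) = Ch(c)+O(s)$. For this I would establish a decomposition
\[
[FT, a_p] = \tfrac{1}{2}\,ch(c) + [F, B] + E,
\]
where $B$ is bounded and $E$ is a sum of ``Jacobi'' commutators of the form $[a_j,[F,a_k]]$. This identity is to be derived by expanding $[FT,a_p]$ using $\{F,[F,a_k]\}=0$, $F^2=1$, the grading rule $F\Gamma = (-1)^p\Gamma F$, and the Hochschild cycle condition $bc = 0$. As a sanity check, for $p=1$ one directly verifies $[F,a_1]a_0 = \tfrac{1}{2}\,ch(c) + \tfrac{1}{2}[F, Fa_0[F,a_1]] - [a_0,[F,a_1]]$ by combining $[a_0,a_1]=0$ with the identity \eqref{eq:bob1}. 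The term $[F,B]$ contributes zero to the trace against $f(s|D|)$ since $[F, f(s|D|)]=0$; the trace of $ch(c)f(s|D|)$ converges to $\mathrm{Tr}(ch(c)) = 2\,Ch(c)$ with rate $O(s)$ (resp.\ $O(s^{1-\varepsilon})$), using $ch(c)\in\mathcal{L}_{p/(p+1),\infty}\subset\mathcal{L}_1$, the elementary bound $|1-f(t)|\leq\min(t^{p+1},1)$, and Lemma \ref{eddl}. The trace of $E$ against $f(s|D|)$ is controlled by a second application of Proposition \ref{commutator estimate}: $\mathrm{Tr}([a_j,[F,a_k]]f(s|D|)) = \mathrm{Tr}([F,a_k][f(s|D|),a_j]) = s\mathrm{Tr}([F,a_k]\delta(a_j)f'(s|D|)) + O(s)$, and the leading $s$-term is itself $O(s)$ by a direct size estimate. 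Assembling the pieces gives $s\mathrm{Tr}(\mathcal{W}_p(c)F f'(s|D|)) = -\tfrac{1}{2}\mathrm{Tr}(ch(c)) + O(s) = -Ch(c) + O(s)$, with the analogous $O(s^{1-\varepsilon})$ estimate in the $\mathcal{M}_{1,\infty}^{(p)}$ case.

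The main obstacle is the algebraic decomposition of $[FT,a_p]$ for general $p$: while for $p=1$ the identity \eqref{eq:bob1} immediately yields the required form using only $[a_0,a_1]=0$, the general-$p$ case requires organizing all $p+1$ summands of $bc = 0$ against the sign bookkeeping from $\{F,[F,a_k]\}=0$ and the grading. Forcing the non-trace-class parts of the expansion to collapse into the symmetric expression $ch(c)$, plus a clean $F$-commutator, plus manageable Jacobi remainders, is the most delicate technical step of the proof.
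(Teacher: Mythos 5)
Your opening reduction is sound and essentially parallels equation \eqref{qsq2} in the paper: applying Proposition \ref{commutator estimate} to trade $s\delta(a_p)f'(s|D|)$ for $[f(s|D|),a_p]$ up to a remainder $R$ with $\|R\|_{p,1}=O(s)$, then H\"older with $T\in\mathcal{L}_{p/(p-1),\infty}$, gives $s\mathrm{Tr}(\mathcal{W}_p(c)Ff'(s|D|))=\mathrm{Tr}(T[f(s|D|),a_p]F)+O(s)$, and the cyclicity step $\mathrm{Tr}(T[f(s|D|),a_p]F)=-\mathrm{Tr}([FT,a_p]f(s|D|))$ is correct. Your $p=1$ identity $[F,a_1]a_0=\tfrac12 ch(c)+\tfrac12[F,Fa_0[F,a_1]]-[a_0,[F,a_1]]$ also checks out. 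But the claimed decomposition $[FT,a_p]=\tfrac12 ch(c)+[F,B]+E$ for general $p$ --- with $B$ bounded and $E$ a sum of controllable Jacobi commutators --- is asserted, not derived, and you yourself flag it as the hard step. That is a genuine gap, and it is exactly where the content of the lemma lives: for $p\geq 2$ the condition $bc=0$ is a sum of $p+1$ tensor contractions rather than a single commutator relation, and it is not at all clear how to organize the expansion of $[F\Gamma a_0\prod_{k<p}[F,a_k],\,a_p]$ so that the cycle condition collapses the non-trace-class pieces into $ch(c)$, an exact $F$-commutator, and an $O(s)$ remainder.

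The paper avoids this combinatorial problem by a structural trick. It introduces $\mathcal{K}_s(c)=T[Ff(s|D|),a_p]$ and $\mathcal{H}_s(c)=TF[f(s|D|),a_p]$, uses the Leibniz identity to get $\mathcal{W}_{\varnothing}(c)f(s|D|)=\mathcal{K}_s(c)-\mathcal{H}_s(c)$, and then observes (via the coboundary computations in Appendix \ref{cobos}) that $c'\mapsto\mathrm{Tr}(\mathcal{K}_s(c'))$ is a Hochschild coboundary, so it vanishes on the cycle $c$ with no explicit cancellation required. The remaining term $\mathrm{Tr}(\mathcal{H}_s(c))$ is related to $s\mathrm{Tr}(\mathcal{W}_p(c)Ff'(s|D|))$ exactly as in your first step, and the closed identity $ch(c)=\mathcal{W}_{\varnothing}(c)+F\mathcal{W}_{\varnothing}(c)F$ converts $\mathrm{Tr}(\mathcal{W}_{\varnothing}(c)f(s|D|))$ into $\tfrac12\mathrm{Tr}(ch(c)f(s|D|))$. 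In short, the duality $(b\theta)(c)=\theta(bc)$ absorbs all the bookkeeping you would otherwise have to do by hand. To complete your argument you would either need to supply the general-$p$ decomposition of $[FT,a_p]$ (essentially reconstructing the coboundary computation term by term), or, more efficiently, recognize the relevant piece of your expansion as a Hochschild coboundary and discard it outright. As written, the proof is complete only for $p=1$.
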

\begin{proof} We only prove the first assertion. The proof of the second one is identical. 
Define the multilinear mappings $\mathcal{K}_s,\,\mathcal{H}_s:\mathcal{A}^{\otimes (p+1)}\to\mathcal{L}(H)$ by setting
$$
\mathcal{K}_s(a_0\otimes\cdots\otimes a_p)
=\Gamma a_0\Big(\prod_{k=1}^{p-1}[F,a_k]\Big)[Ff(s|D|),a_p],
\quad\ \ 
\mathcal{H}_s(a_0\otimes\cdots\otimes a_p)
=\Gamma a_0\Big(\prod_{k=1}^{p-1}[F,a_k]\Big)F[f(s|D|),a_p].
$$
For all $c\in\mathcal{A}^{\otimes (p+1)},$ we have (see p. 293 in \cite{Connes} for the second equality)
$$
\mathcal{W}_{\varnothing}(c)f(s|D|)=\mathcal{K}_s(c)-\mathcal{H}_s(c),
\quad ch(c)=\mathcal{W}_{\varnothing}(c)+F\mathcal{W}_{\varnothing}(c)F.
$$
Therefore,
\begin{equation}\label{qsq1}
{\rm Tr}(ch(c)f(s|D|))=2{\rm Tr}(\mathcal{W}_{\varnothing}(c)f(s|D|))
=2{\rm Tr}(\mathcal{K}_s(c))-2{\rm Tr}(\mathcal{H}_s(c)).
\end{equation}

The mapping $c'\to{\rm Tr}(\mathcal{K}_s(c'))$ on $\mathcal{A}^{\otimes (p+1)}$ 
is the Hochschild coboundary\footnote{For the sake of illustration, let $p=2$ and let the multilinear mapping $\theta:\mathcal{A}^{\otimes 2}\to\mathcal{L}(H)$ be defined by setting
$$
\theta(a_0\otimes a_1)={\rm Tr}(\Gamma a_0[F,a_1]T)
$$
with $T\in\mathcal{L}_1.$ We then have
\begin{align*}
(b\theta)(a_0\otimes a_1\otimes a_2)&=\theta(a_0a_1\otimes a_2)-\theta(a_0\otimes a_1a_2)+\theta(a_2a_0\otimes a_1)\\
&={\rm Tr}(\Gamma a_0a_1[F,a_2]T-\Gamma a_0[F,a_1a_2]T)+{\rm Tr}(\Gamma a_2a_0[F,a_1]T)\\&=-{\rm Tr}(\Gamma a_0[F,a_1]a_2T)+{\rm Tr}(\Gamma a_0[F,a_1]Ta_2)={\rm Tr}(\Gamma a_0[F,a_1][T,a_2]).
\end{align*}} 
of the multilinear mapping defined by 
$$a_0\otimes\cdots\otimes a_{p-1}\to(-1)^p{\rm Tr}(\Gamma a_0\Big(\prod_{k=1}^{p-1}[F,a_k]\Big)Ff(s|D|)).$$
Hence, it vanishes on every Hochschild cycle.
On the other hand, we have
\begin{equation}\label{qsq2}
{\rm Tr}(\mathcal{H}_s(c'))=s{\rm Tr}(\mathcal{W}_p(c')Ff'(s|D|))+O(s)
\end{equation}
as may be seen by evaluating on $a_0\otimes\cdots\otimes a_p,$ using Proposition \ref{commutator estimate} to obtain
\begin{align*}
&\left|{\rm Tr}(\Gamma a_0\prod_{k=1}^{p-1}[F,a_k]F[f(s|D|),a_p])-s{\rm Tr}(\Gamma a_0\prod_{k=1}^{p-1}[F,a_k]F\delta(a_p)f'(s|D|))\right|\\
&\leq\|\Gamma a_0\prod_{k=1}^{p-1}[F,a_k]F\|_{q,\infty}\,\|[f(s|D|),a_p]-s\delta(a_p)f'(s|D|)\|_{p,1}=O(s)
\end{align*}
and, since,
\begin{align*}
&\left|{\rm Tr}(\Gamma a_0\prod_{k=1}^{p-1}[F,a_k]F\delta(a_p)f'(s|D|))-{\rm Tr}(\mathcal{W}_p(a)Ff'(s|D|))\right|\\
&\leq\|(\Gamma a_0\Big(\prod_{k=1}^{p-1}[F,a_k]\Big)[F,\delta(a_p)]|D|^p\|_{\infty}\cdot\||D|^{-p}f'(s|D|)\|_1=O(1),
\end{align*}
the equality \eqref{qsq2} follows.
Combining the equalities 
\eqref{qsq1}, \eqref{qsq2} and the fact that 
${\rm Tr}(\mathcal{K}_s(c))=0$ for every Hochschild 
cycle $c\in\mathcal{A}^{\otimes (p+1)},$ we infer that
\begin{equation}\label{chs1}
{\rm Tr}(ch(c)f(s|D|))=-2s{\rm Tr}(\mathcal{W}_p(c)Ff'(s|D|))+O(s)
\end{equation}
for every Hochschild cycle $c\in\mathcal{A}^{\otimes (p+1)}.$
The operator $B=ch(c)|D|^{p+1}$ is bounded by Lemma \ref{f second est}. 
Using Lemma \ref{eddl} \eqref{eddla}, we obtain
$$
|{\rm Tr}(B{f(s|D|)}{|D|^{-p-1}})-{\rm Tr}(B{|D|^{-p-1}})|\leq\|B\|_{\infty}{\rm Tr}(({1-f(s|D|)}){|D|^{-p-1}})=O(s).
$$
Therefore,
\begin{equation}\label{chs2}
{\rm Tr}(ch(c)f(s|D|))=Ch(c)+O(s).
\end{equation}
By combining \eqref{chs1} and \eqref{chs2}, we conclude the proof.
\end{proof}

\begin{proof}[Proof of Proposition \ref{second cycle lemma}] By Lemma \ref{first cycle lemma}, we have
$$
{\rm Tr}(\mathcal{W}_p(c)F|D|^pe^{-(s|D|)^{p+1}})
=\frac{1}{(p+1)}Ch(c)s^{-p-1}+O(s^{\varepsilon-p}).
$$
Setting $u=s^{p+1},$ we obtain
$$
{\rm Tr}(\mathcal{W}_p(c)F|D|^pe^{-u|D|^{p+1}})
=\frac{1}{(p+1)u}Ch(c)+O(u^{-(p-\varepsilon)/(p+1)}).
$$
Integrating over $u\in[s,1],$ we obtain
$$
{\rm Tr}(\mathcal{W}_p(c)F|D|^{-1}(e^{-s|D|^{p+1}}-e^{-|D|^{p+1}}))
=\frac{1}{(p+1)}Ch(c)\log\left(\frac1s\right)+O(1).
$$
Taking into account that $D^{-p}\in\mathcal{M}_{1,\infty}$ implies that
$
\mathcal{W}_p(c)F|D|^{-1}e^{-|D|^{p+1}}\in\mathcal{L}_1.
$
Replacing $s$ with $s^{p+1},$ we conclude the proof.
\end{proof}

\subsection{Proof of the main result}\label{prmain}

In this subsection, we prove Theorem \ref{main result}. 
Recall that the multilinear mapping $\mathcal{W}_p$ is defined in Section \ref{plan}.

\begin{lem}\label{intermed theorem} Let $(\mathcal{A},H,D)$ be an  odd (respectively, even) 
$QC^{\infty}$ spectral triple and let $c\in\mathcal{A}^{\otimes (p+1)}$ be a 
Hochschild cycle. Suppose that $p$ is odd (respectively, even).
\begin{enumerate}[{\rm (a)}]
\item If $D^{-p}\in\mathcal{L}_{1,\infty},$ then 
$
\varphi(\mathcal{W}_p(c)D^{-1})=\frac{Ch(c)}{p}
$
for every normalised trace $\varphi$ on $\mathcal{L}_{1,\infty}.$
\item If $D^{-p}\in\mathcal{M}_{1,\infty},$ then 
$
{\rm Tr}_{\omega}(\mathcal{W}_p(c)D^{-1})=\frac{Ch(c)}{p}
$
for every Dixmier trace on $\mathcal{M}_{1,\infty}.$
\end{enumerate}
\end{lem}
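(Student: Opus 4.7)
The plan is to apply the abstract measurability criterion (Proposition \ref{measurability criterion}) to the operator $T := \mathcal{W}_p(c)D^{-1}$, using the heat-semigroup asymptotic supplied by Proposition \ref{second cycle lemma}. The matching choice of parameters is $V := |D|^{-p}$ and $\alpha := (p+1)/p > 1$; under hypothesis (a) we have $V \in \mathcal{L}_{1,\infty}$, under (b) we have $V \in \mathcal{M}_{1,\infty}$, and a direct computation yields $(nV)^{-\alpha} = (s|D|)^{p+1}$ with $s = n^{-1/p}$. Since $\log(1/s) = (\log n)/p$, Proposition \ref{second cycle lemma} rewrites as
\[
{\rm Tr}(T\,e^{-(nV)^{-\alpha}}) = \frac{Ch(c)}{p}\log n + O(1),\quad n\to\infty,
\]
which is precisely the hypothesis \eqref{meas eq crit} of the measurability criterion with $z = Ch(c)/p$.

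To invoke Proposition \ref{measurability criterion} I must next exhibit $T$ in the form $AV$ with $A \in \mathcal{L}(H)$. Since $F$ and $|D|$ commute (both are functions of $D$), $D^{-1} = F|D|^{-1}$, and therefore $T = (\mathcal{W}_p(c)F|D|^{p-1})\cdot V$; set $A := \mathcal{W}_p(c)F|D|^{p-1}$. Unwinding the definition of $\mathcal{W}_p(c)$ and commuting $F$ past $|D|^{p-1}$ gives
\[
A = \Gamma a_0\prod_{k=1}^{p-1}[F,a_k]\cdot \delta(a_p)|D|^{p-1}F.
\]
Expanding $\delta(a_p)|D|^{p-1} = |D|^{p-1}\delta(a_p) + \sum_{j=0}^{p-2}|D|^j\delta^2(a_p)|D|^{p-2-j}$, the principal piece pairs $\prod_{k=1}^{p-1}[F,a_k]|D|^{p-1}$, which is bounded by Lemma \ref{f second est} (applied with $m = p-2$), with the bounded operators $\Gamma a_0$, $\delta(a_p)$, $F$. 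For each remaining summand, factor out $\prod_{k=1}^{p-1}[F,a_k]|D|^{p-1}$ again and reduce to showing that $|D|^{-r}\delta^2(a_p)|D|^{r-1}$ is bounded for $1 \le r \le p-1$. Writing this as $|D|^{-1}\cdot(|D|^{-(r-1)}\delta^2(a_p)|D|^{r-1})$, the remaining conjugation is bounded by a short induction on $r$: the commutator $[\delta^k(a_p),|D|^s]$ telescopes into a sum whose terms, after one $|D|^{-1}$ is extracted, are conjugates of $\delta^{k+1}(a_p)$ by a strictly smaller power of $|D|$, and each $\delta^k(a_p)$ is bounded by the $QC^\infty$ hypothesis.

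With $A \in \mathcal{L}(H)$ and $V \geq 0$ in the appropriate ideal, Proposition \ref{measurability criterion}\eqref{mesa} (respectively \eqref{mesb}) now delivers the required identities: $\varphi(T) = Ch(c)/p$ for every normalised trace on $\mathcal{L}_{1,\infty}$ in case (a), and ${\rm Tr}_\omega(T) = Ch(c)/p$ for every Dixmier trace on $\mathcal{M}_{1,\infty}$ in case (b). The conceptual content sits in the heat-semigroup asymptotic already established in Proposition \ref{second cycle lemma}, so the only real obstacle in the present lemma is the somewhat intricate bookkeeping needed to verify boundedness of the auxiliary operator $A$; once that is in hand, the measurability criterion does all of the remaining work.
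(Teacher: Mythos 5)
Your proof is correct and follows essentially the same route as the paper: set $V=|D|^{-p}$, $\alpha=(p+1)/p$, check that $(nV)^{-\alpha}=(s|D|)^{p+1}$ with $s=n^{-1/p}$, feed Proposition \ref{second cycle lemma} into Proposition \ref{measurability criterion} with $z=Ch(c)/p$, and verify that $A=\mathcal{W}_p(c)F|D|^{p-1}$ is bounded. The one place where you diverge is the boundedness of $A$: the paper invokes the algebra $\mathcal{B}=\{A:\, A:{\rm dom}(D)\to{\rm dom}(D),\ \delta^n(A)\in\mathcal{L}(H)\ \forall n\}$ and the commutation rule $A|D|^n=|D|^nB$ (established in the proof of Lemma \ref{f second est}) to conclude at once that $\mathcal{W}_p(c)|D|^{p-1}\in\mathcal{B}\subset\mathcal{L}(H)$, whereas you expand the commutators $[\delta^k(a_p),|D|^s]$ by hand and run an induction on the power of $|D|$. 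Your explicit computation is sound (modulo a harmless sign in the expansion of $\delta(a_p)|D|^{p-1}$) but more laborious; the paper's use of $\mathcal{B}$ compresses the same bookkeeping into a single algebraic fact and is the cleaner way to see it.
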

\begin{proof} Recall the algebra 
$
\mathcal{B}
=\{A\in\mathcal{L}(H):\ A:{\rm dom}(D)\to{\rm dom}(D),\ \delta^n(A)\in\mathcal{L}(H)\mbox{ for all }n\geq0\}.
$
It follows from Lemma \ref{f second est} that $\mathcal{W}_p(a)|D|^{p-1}\in\mathcal{B}$ and is, therefore, bounded. 
Set $V=|D|^{-p}$ and $\alpha=1+1/p.$ It follows from Proposition \ref{second cycle lemma} that
$$
{\rm Tr}(\mathcal{W}_p(c)D^{-1}e^{-(nV)^{-\alpha}})=\frac{Ch(c)}{p}\log(n)+O(1)
$$
as $n\to\infty.$ By the previous paragraph, we have 
$A=\mathcal{W}_p(c)F|D|^{p-1}\in\mathcal{L}(H)$ and, 
by assumption, $V\in\mathcal{L}_{1,\infty}$ (respectively, 
$V\in\mathcal{M}_{1,\infty}$). Therefore, Proposition 
\ref{measurability criterion} is applicable and yields
$
\varphi(\mathcal{W}_p(c)D^{-1})=\frac{Ch(c)}{p}
$
for every normalised trace $\varphi$ on $\mathcal{L}_{1,\infty}$ or
$
{\rm Tr}_{\omega}(\mathcal{W}_p(c)D^{-1})=\frac{Ch(c)}{p}
$
for every Dixmier trace on $\mathcal{M}_{1,\infty},$ respectively.
\end{proof}

\begin{lem}\label{better triple} If $(\mathcal{A},H,D)$ is a $QC^{\infty}$ spectral triple, then so is $(\mathcal{A},H,D_0),$ where $D_0=F(1+D^2)^{1/2}.$
\end{lem}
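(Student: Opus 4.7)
The plan is to verify the spectral triple axioms together with the $QC^\infty$ condition for $(\mathcal{A},H,D_0)$ by showing that $D_0$ generates the same smooth calculus as $D$. The essential observation is that the phase $F$ commutes with $|D|$, hence with $(1+D^2)^{1/2}$, so $D_0^2=F^2(1+D^2)=1+D^2$ and $|D_0|=(1+D^2)^{1/2}$. This immediately gives ${\rm dom}(D_0^n)={\rm dom}(D^n)$ for every $n\geq 0$, so condition (a) of Definition \ref{qc} for $(\mathcal{A},H,D_0)$ is inherited from the corresponding condition for $(\mathcal{A},H,D)$. Moreover $D_0$ is self-adjoint (being the product of two commuting self-adjoint operators) and invertible with $D_0^{-1}=F(1+D^2)^{-1/2}$, and $a(1+D_0^2)^{-1/2}=a(2+D^2)^{-1/2}$ is compact for every $a\in\mathcal{A}$ because $(1+D^2)^{1/2}(2+D^2)^{-1/2}$ is bounded.

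Set $b:=|D_0|-|D|=g(|D|)$ with $g(t)=(1+t^2)^{1/2}-t$. Since $g$ is bounded on $[0,\infty)$ the operator $b$ is bounded, and because it is a function of $|D|$ it commutes with $|D|$, so $\delta^k(b)=0$ for all $k\geq 1$. Let $\mathcal{B}$ be the smooth subalgebra introduced in the proof of Lemma \ref{f second est}, namely those $A\in\mathcal{L}(H)$ preserving ${\rm dom}(D)$ with $\delta^n(A)\in\mathcal{L}(H)$ for all $n\geq 0$, and let $\mathcal{B}_0$ be the analogous object formed from $\delta_0=[|D_0|,\cdot]$ in place of $\delta$. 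I would prove $\mathcal{B}=\mathcal{B}_0$ via the identity $\delta_0=\delta+{\rm ad}(b)$ and induction: if $A\in\mathcal{B}$ then $\delta_0(A)=\delta(A)+bA-Ab\in\mathcal{B}$, using $b\in\mathcal{B}$ together with the fact that $\mathcal{B}$ is a $\delta$-stable algebra; iterating yields $\delta_0^n(A)\in\mathcal{B}\subset\mathcal{L}(H)$ for all $n$. The reverse inclusion is symmetric since $|D|-|D_0|=-b$ commutes with $|D_0|$.

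Condition (b) of Definition \ref{qc} for $D_0$ then follows quickly. Since $a\in\mathcal{B}$ by hypothesis and $\mathcal{B}=\mathcal{B}_0$, the iterated commutators $\delta_0^n(a)$ are bounded. For $\partial_0(a)=[D_0,a]$, write $\partial_0(a)=\partial(a)+[Fg(|D|),a]$; the term $\partial(a)$ lies in $\mathcal{B}$ by the $QC^\infty$ hypothesis on $D$, while $F$ and $g(|D|)$ both commute with $|D|$ and are bounded, so each individually belongs to $\mathcal{B}$, hence so does their product, and therefore the commutator $[Fg(|D|),a]\in\mathcal{B}$ by the algebra structure of $\mathcal{B}$. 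Thus $\partial_0(a)\in\mathcal{B}=\mathcal{B}_0$, and $\delta_0^n(\partial_0(a))$ is bounded for every $n$.

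The main obstacle is the identification $\mathcal{B}=\mathcal{B}_0$, but this collapses to the single observation that $b=|D_0|-|D|$ is a bounded function of $|D|$; all remaining steps are routine Leibniz-rule inductions.
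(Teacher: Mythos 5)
Your proof is correct and rests on the same key observation as the paper's: that $|D_0|-|D|=g(|D|)$ with $g(t)=(1+t^2)^{1/2}-t$ is a bounded operator commuting with $|D|$, so that $\delta_0=\delta+\operatorname{ad}(b)$ with $\operatorname{ad}(b)$ a bounded derivation. Where the paper then invokes commutativity of $\delta$ and $\operatorname{ad}(b)$ (both are functions of $|D|$ acting by ad) to write $\delta_0^n(a)=\sum_{k}\binom{n}{k}(\operatorname{ad} b)^{n-k}(\delta^k(a))$ and read off boundedness directly, you instead route the argument through the identification $\mathcal{B}=\mathcal{B}_0$. This is a perfectly reasonable packaging, but it forces you to assert that $\mathcal{B}$ is a $\delta$-stable algebra, which is not literally immediate from its definition: for $A\in\mathcal{B}$ one must check that the bounded extension of $\delta(A)$ again preserves $\operatorname{dom}(D)$. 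That follows from the resolvent identity $B(|D|+i)^{-1}=(|D|+i)^{-1}B+(|D|+i)^{-1}\delta(B)(|D|+i)^{-1}$ applied with $B=\delta(A)$, but it deserves a line. The paper's binomial expansion sidesteps this entirely because each $\delta^k(a)$ is fed only into the everywhere-defined bounded map $\operatorname{ad}(b)$, never back into $\delta$. Apart from that small technical point, and the fact that you also spell out the spectral-triple axioms (which the paper leaves implicit), the two arguments are the same.
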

\begin{proof} Set $D_1=D_0-D\in\mathcal{L}(H).$ Define the operations $\delta_0:a\to[|D_0|,a]$ and $\delta_1:a\to[|D_1|,a].$ Noting that $|D_0|=|D|+|D_1|,$ we infer that $\delta_0=\delta+\delta_1.$ Since the operations $\delta_0$ and $\delta_1$ commute, it follows that
$$\delta_0^n(a)=\sum_{k=0}^n\binom{n}{k}\delta_1^{n-k}(\delta^k(a)).$$
Since $\delta^k(a)$ is well defined and since $\delta_1:\mathcal{L}(H)\to\mathcal{L}(H)$ is a bounded mapping, it follows that $\delta_0^n(a)$ is well defined.
Similarly, $\delta_0^n(\partial(a))$ is well defined. Define the operations $\partial_0:a\to[D_0,a]$ and $\partial_1:a\to[D_1,a].$ We have
$$\delta_0^n(\partial_0(a))=\delta_0^n(\partial(a))+\delta_0^n(\partial_1(a))=\delta_0^n(\partial(a))+\partial_1(\delta_0^n(a)).$$
By Definition \ref{qc}, $(\mathcal{A},H,D_0)$ is a $QC^{\infty}$ spectral triple.
\end{proof}

We are now ready to prove the main result of the paper. 
We present the detailed argument for the first part of the theorem.

{\bf Case 1:} Suppose that $(\mathcal{A},H,D)$ is a $QC^{\infty}$ 
odd $(p,\infty)-$summable spectral triple and that $p$ is even.

Let $\varphi$ be a trace on $\mathcal{L}_{1,\infty}.$ The mapping defined on
$\mathcal{A}^{\otimes (p+1)}$ by
$c\to\varphi(\Omega(c)(1+D^2)^{-p/2})$
is the Hochschild coboundary (see Appendix \ref{cobos}) of the multilinear mapping
defined by
$$
a_0\otimes\cdots\otimes a_{p-1}\to\frac12\varphi\Big(\prod_{k=0}^{p-1}[D,a_k](1+D^2)^{-p/2}\Big).
$$
Every Hochschild coboundary vanishes on every Hochschild cycle, so that 
$\varphi(\Omega(c)(1+D^2)^{-p/2})=0$ for every Hochschild cycle 
$c\in\mathcal{A}^{\otimes (p+1)}.$ Thus, the left hand side of \eqref{main equation} 
vanishes.
For $c'=a_0\otimes\cdots\otimes a_p,$ with $p$  even, 
$F\prod_{k=0}^p[F,a_k]=-\prod_{k=0}^p[F,a_k]F$ and, therefore,
$$
Ch(c')={\rm Tr}\Big(F\prod_{k=0}^p[F,a_k]\Big)=-{\rm Tr}\Big(\prod_{k=0}^p[F,a_k]F\Big)=-Ch(c').
$$
Hence, $Ch(c')=0$ for all $c'\in\mathcal{A}^{\otimes (p+1)}.$ 
Thus, the right hand side of \eqref{main equation} vanishes.

{\bf Case 2:} Suppose that $(\mathcal{A},H,D)$ is a $QC^{\infty}$ even $(p,\infty)-$summable spectral triple and that $p$ is odd.

Let $\varphi$ be a trace on $\mathcal{L}_{1,\infty}.$ By Definition \ref{oddeven}, we have $\Gamma[D,a]=-[D,a]\Gamma$ and $\Gamma a=a\Gamma$ for all $a\in\mathcal{A}.$ Since $p$ is odd, it follows that
$$\Gamma a_0\prod_{k=1}^p[D,a_k](1+D^2)^{-p/2}=a_0\Gamma\prod_{k=1}^p[D,a_k](1+D^2)^{-p/2}$$
$$\quad\quad\quad=-a_0\prod_{k=1}^p[D,a_k]\Gamma(1+D^2)^{-p/2}=-a_0\prod_{k=1}^p[D,a_k](1+D^2)^{-p/2}\Gamma.$$
Applying the trace $\varphi,$ we obtain
$$
\varphi\Big(\Gamma a_0\prod_{k=1}^p[D,a_k](1+D^2)^{-p/2}\Big)
=-\varphi\Big(\Gamma a_0\prod_{k=1}^p[D,a_k](1+D^2)^{-p/2}\Big).
$$
Hence, the left hand side of \eqref{main equation} vanishes.
Repeating the argument in Step 1, we infer that the right hand side of \eqref{main equation} vanishes as well.

{\bf Case 3:} Suppose that $p$ and the $(p,\infty)-$summable spectral triple $(\mathcal{A},H,D)$ are simultaneously odd (or even).

If $D$ is invertible, then we infer from Proposition \ref{reduction} and Lemma \ref{intermed theorem} that
$$
\varphi(\Omega(c)|D|^{-p})=p\varphi(\mathcal{W}_p(c)D^{-1})=Ch(c)
$$
and the assertion is proved.
Suppose now that $D$ is not invertible and consider the invertible operator 
$D_0=F(1+D^2)^{1/2}.$ It follows from Lemma \ref{better triple} that $(\mathcal{A},H,D_0)$ is a spectral triple with $D_0^{-p}\in\mathcal{L}_{1,\infty}.$ Clearly,
$$
D_1:=D_0-D=F((1+|D|^2)^{1/2}-|D|)=F(|D|+(1+|D|^2)^{1/2})^{-1}\in\mathcal{L}_{p,\infty}.
$$
We claim that 
\begin{equation}\label{claim}
a_0\prod_{k=1}^p[D,a_k]|D_0|^{-p}-a_0\prod_{k=1}^p[D_0,a_k]|D_0|^{-p}\in\mathcal{L}_1
\end{equation}
for $a_0\otimes\cdots\otimes a_p\in\mathcal{A}^{\otimes (p+1)}.$ To see the claim, let us write
$$
\prod_{k=1}^p[D_0,a_k]=\sum _{\mathscr{A}\subset \{1,2,\dots,p\}}\prod_{k=1}^p\begin{cases} [D, a_k]  &\text{for } k\in \mathscr{A}\\
[D_1, a_k]  &\text{for } k\notin \mathscr{A} \end{cases}
$$
The summand corresponding to the case $\mathscr{A}=\{1,2,\dots,p\}$ coincides with $a_0\prod_{k=1}^p[D,a_k]|D_0|^{-p}$, while all other summands belong to $\mathcal{L}_1$. Indeed, since there exists $k\notin\mathscr{A}$, it follows that the product contains the term $[D_1, a_k]\in \mathcal{L}_{p,\infty}$. Thus, such a summand belongs to 
$\mathcal{L}_{p,\infty}\cdot \mathcal{L}_{1,\infty}\subset \mathcal{L}_{1}$ (by Equation \eqref{lpi mult}).
Since the assertion holds for the spectral triple 
$(\mathcal{A},H,D_0),$ we infer that it also holds 
for the spectral triple $(\mathcal{A},H,D).$

{\bf Case 4:} If the spectral triple is 
$\mathcal{M}_{1,\infty}^{(p)}-$summable, 
then the proof of Theorem \ref{main result} \eqref{mainb} 
follows that of Theorem \ref{main result} \eqref{maina} 
(see Cases 1,2,3 above) {\it mutatis mutandi}.

\appendix
\section{Computation of coboundaries}\label{cobos}

\begin{compo} Let $\mathscr{A}\subset\{1,\dots,p\}$ be such that $m-1,m\in\mathscr{A}.$ Let $\varphi$ be a trace on $\mathcal{L}_{1,\infty}$ (respectively, on $\mathcal{M}_{1,\infty}$). The mapping on 
$\mathcal{A}^{\otimes (p+1)}$ defined by
$c\to\varphi(\mathcal{W}_{\mathscr{A}}(c)D^{-|\mathscr{A}|})$
is a Hochschild coboundary of the multilinear mapping
$$
\theta:a_0\otimes\cdots\otimes a_{p-1}\to\frac{(-1)^{m-1}}{2}\varphi\left(\Gamma a_0\prod_{k=1}^{m-2}[b_k,a_k]\delta^2(a_{m-1})\prod_{k=m}^{p-1}[b_{k+1},a_k]D^{-|\mathscr{A}|}\right).
$$
\end{compo}
\begin{proof} For brevity, we prove the assertion for $p=2$ as the proof in the general case is very similar. We have
\begin{align*}
(b\theta)(a_0,a_1,a_2)&=\theta(a_0a_1,a_2)-\theta(a_0,a_1a_2)+\theta(a_2a_0,a_1)\\
&=-\frac12\varphi(\Gamma a_0a_1\delta^2(a_2)|D|^{-2})+\frac12\varphi(\Gamma a_0\delta^2(a_1a_2)|D|^{-2})-\frac12\varphi(\Gamma a_2a_0\delta^2(a_1)|D|^{-2}).
\end{align*}
Since $\Gamma$ commutes with $a_2$ and since $\varphi$ is a trace, it follows that
$$
\varphi(\Gamma a_2a_0\delta^2(a_1)|D|^{-2})=\varphi(\Gamma a_0\delta^2(a_1)|D|^{-2}a_2)
=\varphi(\Gamma a_0\delta^2(a_1)a_2|D|^{-2})+\varphi(\Gamma a_0\delta^2(a_1)[|D|^{-2},a_2]).$$
We have
$$[|D|^{-2},a_2]=-|D|^{-1}\delta(a_2)|D|^{-2}-|D|^{-2}\delta(a_2)|D|^{-1}\in\mathcal{L}_{1/3,\infty}\subset\mathcal{L}_1.$$
Therefore,
$$\varphi(\Gamma a_2a_0\delta^2(a_1)|D|^{-2})=\varphi(\Gamma a_0\delta^2(a_1)a_2|D|^{-2}a_2).$$
Finally, we have
$$(b\theta)(a_0,a_1,a_2)=\frac12\varphi(\Gamma a_0(\delta^2(a_1a_2)-a_1\delta^2(a_2)-\delta^2(a_1)a_2)|D|^{-2})$$ and 
since
$\delta^2(a_1a_2)-a_1\delta^2(a_2)-\delta^2(a_1)a_2=2\delta(a_1)\delta(a_2),$
the assertion follows.
\end{proof}

\begin{compo} Let $\mathscr{A}_1,\mathscr{A}_2\subset\{1,\dots,p\}$ be such that $|\mathscr{A}_1|=|\mathscr{A}_2|$ and  $\mathscr{A}_1\Delta\mathscr{A}_2=\{m-1,m\}.$ Let $\varphi$ be a trace on $\mathcal{L}_{1,\infty}$ (respectively, on $\mathcal{M}_{1,\infty}$). The mapping
on $\mathcal{A}^{\otimes (p+1)}$ defined by
$$
c\to\varphi(\mathcal{W}_{\mathscr{A}_1}(c)D^{-|\mathscr{A}_1|})
+\varphi(\mathcal{W}_{\mathscr{A}_2}(c)D^{-|\mathscr{A}_2|})
$$
is a Hochschild coboundary of the multilinear mapping
$$
\theta:a_0\otimes\cdots\otimes a_{p-1}\to(-1)^{m-1}\varphi\left(\Gamma a_0\prod_{k=1}^{m-2}[b_k,a_k][F,\delta(a_{m-1})]\prod_{k=m}^{p-1}[b_{k+1},a_k]D^{-|\mathscr{A}_1|}\right).
$$
\end{compo}
\begin{proof} For brevity, we prove the assertion for $p=2$ as the proof in the general case is a slight extension of this argument. We have
$$
(b\theta)(a_0,a_1,a_2)=\theta(a_0a_1,a_2)-\theta(a_0,a_1a_2)+\theta(a_2a_0,a_1)
$$
$$
=-\varphi(\Gamma a_0a_1[F,\delta(a_2)]|D|^{-1})
+\varphi(\Gamma a_0[F,\delta(a_1a_2)]|D|^{-1})-\varphi(\Gamma a_2a_0[F,\delta(a_1)]|D|^{-1}).
$$
Since $\Gamma$ commutes with $a_2$ and since $\varphi$ is a trace, it follows that
\begin{align*}
\varphi(\Gamma a_2a_0[F,\delta(a_1)]|D|^{-1})
&=\varphi(\Gamma a_0[F,\delta(a_1)]|D|^{-1}a_2)\\
&=\varphi(\Gamma a_0[F,\delta(a_1)]a_2|D|^{-1})+\varphi(\Gamma a_0[F,\delta(a_1)][|D|^{-1},a_2]).
\end{align*}
We have
$$
[|D|^{-1},a_2]=-|D|^{-1}\delta(a_2)|D|^{-1}\in\mathcal{L}_{1/2,\infty}\subset\mathcal{L}_1.
$$
Therefore,
$$
\varphi(\Gamma a_2a_0[F,\delta(a_1)]|D|^{-1})=\varphi(\Gamma a_0[F,\delta(a_1)]a_2|D|^{-1}).
$$
Finally, we have
$$
(b\theta)(a_0,a_1,a_2)=\varphi(\Gamma a_0([F,\delta(a_1a_2)]-a_1[F,\delta(a_2)]-[F,\delta(a_1)]a_2)|D|^{-1}).
$$
Since
$$
[F,\delta(a_1a_2)]-a_1[F,\delta(a_2)]-[F,\delta(a_1)]a_2=[F,a_1]\delta(a_2)+\delta(a_1)[F,a_2],
$$
the assertion follows.
\end{proof}

\end{document}